\crefname{equation}{}{}
\Crefname{equation}{Equation}{Equations}
\crefname{theorem}{Theorem}{Theorems}
\Crefname{theorem}{Theorem}{Theorems}
\crefname{lemma}{Lemma}{Lemmas}
\Crefname{lemma}{Lemma}{Lemmas}
\crefname{proposition}{Proposition}{Propositions}
\Crefname{proposition}{Proposition}{Propositions}
\crefname{corollary}{Corollary}{Corollaries}
\Crefname{corollary}{Corollary}{Corollaries}
\crefname{conjecture}{Conjecture}{Conjectures}
\Crefname{conjecture}{Conjecture}{Conjectures}
\crefname{section}{Section}{Sections}
\Crefname{section}{Section}{Sections}
\crefname{example}{Example}{Examples}
\Crefname{example}{Example}{Examples}
\crefname{problem}{Problem}{Problems}
\Crefname{problem}{Problem}{Problems}
\crefname{table}{Table}{Tables}
\Crefname{table}{Table}{Tables}
\crefname{remark}{Remark}{Remarks}
\Crefname{remark}{Remark}{Remarks}
\crefname{definition}{Definition}{Definitions}
\Crefname{definition}{Definition}{Definitions}
\definecolor{refkey}{gray}{.75}
\definecolor{labelkey}{gray}{.2}
\newtheorem{theorem}{Theorem}[section]
\newtheorem{proposition}[theorem]{Proposition}
\newtheorem{lemma}[theorem]{Lemma}
\newtheorem{corollary}[theorem]{Corollary}
\newtheorem{question}[theorem]{Question}
\theoremstyle{definition}
\newtheorem{example}[theorem]{Example}
\theoremstyle{remark}
\newcommand{\ZZ}{\mathbb{Z}}
\newcommand{\ext}{\mathrm{ex}}
\newcommand{\mockalph}[1]{}
\tikzstyle{p}+=[fill=black, circle, minimum width = 1pt, inner sep =
\tikzstyle{w}+=[fill=white, draw, circle, minimum width = 1pt, inner sep =
\title{Extremal numbers and Sidorenko's conjecture}
\author{David Conlon\thanks{Department of Mathematics, California Institute of Technology, Pasadena, CA 91125, USA. Email: {\tt dconlon@caltech.edu}.
Research supported by NSF Award DMS-2054452.} 
\and 
Joonkyung Lee\thanks{Department of Mathematics, Yonsei University, Seoul, South Korea. Email: \texttt{joonkyunglee@yonsei.ac.kr}. Research supported by Samsung STF Grant SSTF-BA2201-02 and the Yonsei University Research Fund 2023-22-0125.} 
\and 
Alexander Sidorenko\thanks{Alfr\'ed R\'enyi Institute of Mathematics, Budapest, Hungary. Email: \texttt{sidorenko.ny@gmail.com}.}}
\date{}
\begin{document}
\maketitle

\begin{abstract}
Sidorenko's conjecture states that, for all bipartite graphs $H$, quasirandom graphs contain asymptotically the minimum number of copies of $H$ taken over all graphs with the same order and edge density. While still open for graphs, the analogous statement is known to be false for hypergraphs. We show that there is some advantage in this, in that if Sidorenko's conjecture does not hold for a particular $r$-partite $r$-uniform hypergraph $H$, then it is possible to improve the standard lower bound, coming from the probabilistic deletion method, for its extremal number $\mathrm{ex}(n,H)$, the maximum number of edges in an $n$-vertex $H$-free $r$-uniform hypergraph. With this application in mind, we find a range of new counterexamples to the conjecture for hypergraphs, including all linear hypergraphs containing a loose triangle and all $3$-partite $3$-uniform tight cycles.
\end{abstract}

\section{Introduction}

An \emph{$r$-graphon} $W:[0,1]^r \rightarrow [0,1]$ is an $r$-variable symmetric measurable function.\footnote{We note that this is different from the usual definition of \emph{hypergraphons} (see, for instance, \cite{ESz12}), where $2^r-2$ variables are used to model limits of $r$-uniform hypergraphs. Such an approach is required to make the space complete, which is not necessary for our purposes.} Given an $r$-uniform hypergraph (or \emph{$r$-graph}) $H$, the \emph{homomorphism density} $t_H(W)$ of $H$ in $W$ is 
\begin{align*}
    t_H(W) := \int \prod_{u_1\cdots u_r\in E(H)}W(x_{u_1},x_{u_2},\ldots,x_{u_r}) d\mu^{{\rm v}(H)}.
\end{align*}
An $r$-graph $H$ is said to be \emph{Sidorenko} if
\begin{align}\label{eq:Sidorenko}
    t_H(W) \geq t_{K_r}(W)^{{\rm e}(H)} = \left(\int W \, d\mu^r\right)^{{\rm e}(H)}
\end{align}
for all $r$-graphons $W:[0,1]^r \rightarrow [0,1]$, where $K_r$ denotes the $r$-graph with one edge. 
In graph-theoretic terms, an $r$-graph $H$ is Sidorenko if quasirandom $r$-graphs contain asymptotically the minimum number of copies of $H$ taken over all $r$-graphs with the same order and edge density. 
A celebrated conjecture of Sidorenko~\cite{Sid92, Sid93} (see also the closely related conjecture of Erd\H{o}s and Simonovits~\cite{ES84}) says that a graph $H$ is Sidorenko if and only if it is bipartite. The necessity of the bipartiteness condition is straightforward to verify, but its sufficiency remains wide open despite significant attention in recent years~\cite{CFS10, CKLL15, CL17, CL21, C22, CR21, KLL16, LSz12, Sz15}.

It is also tempting to make an analogous conjecture for $r$-uniform hypergraphs $H$, namely, that $H$ is Sidorenko if and only if it is $r$-partite. Unfortunately, as already observed in~\cite{Sid93}, this is false, with the $3$-uniform \emph{loose triangle} with vertex set $\{1, 2, \dots, 6\}$ and edges $\{1, 2, 3\}$, $\{3, 4, 5\}$, $\{5, 6, 1\}$ being a counterexample. However, as we will show, there is still something to be gained if the conjecture fails to hold, in that we can improve the lower bound for the extremal number of any $r$-uniform hypergraph $H$ for which Sidorenko's conjecture is false.

Given a natural number $n$ and an $r$-graph $H$, the \emph{extremal number} $\ext(n,H)$ is the maximum number of edges in an $n$-vertex $H$-free $r$-graph. 
It is known that for any fixed $r$-graph $H$, there exists a non-negative number $\pi(H)$ such that $\ext(n,H) = (\pi(H) + o(1)) \binom{n}{r}$ and that $\pi(H) = 0$ if and only if $H$ is $r$-partite. 
With very few exceptions (see, for example,~\cite{FS13} for classical results and~\cite{CJ22} and its references for more recent developments), 
the problem of estimating $\ext(n,H)$ more accurately in the degenerate case where $H$ is $r$-partite is wide open. In general, the best known lower bound comes from a simple application of the probabilistic deletion method and says that for any fixed $r$-partite $r$-graph $H$ there exists some constant $\gamma > 0$ such that
\[\ext(n,H) \ge \gamma n^{r-\frac{{\rm v}(H)-r}{{\rm e}(H)-1}}.\]
Our first result improves this estimate for non-Sidorenko $r$-graphs.

\begin{theorem}\label{thm:extremal_number}
For any non-Sidorenko $r$-graph $H$, there exist constants $c, \gamma >0$ such that 
\begin{align*}
    \ext(n,H) \ge \gamma n^{r-\frac{{\rm v}(H)-r}{{\rm e}(H)-1}+c}.
\end{align*} 
\end{theorem}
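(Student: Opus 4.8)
The plan is to run the probabilistic deletion method, but driving it with a \emph{structured} random $r$-graph rather than the Erd\H{o}s--R\'enyi one: the template will be a high tensor power of a graphon witnessing that $H$ is non-Sidorenko, the gain coming from the fact that tensoring multiplies homomorphism densities, so a constant-factor deficit in the count of copies of $H$ gets amplified into a genuine polynomial saving. Concretely, fix a graphon $W_0:[0,1]^r\to[0,1]$ with $t_H(W_0)<t_{K_r}(W_0)^{{\rm e}(H)}$; since the left-hand side is an integral of nonnegative functions, this already forces $\theta:=t_{K_r}(W_0)>0$. Replacing $W_0$ by $(1-\e)W_0+\e$ for a small enough $\e>0$ changes $t_H$ and $t_{K_r}$ by at most $O(\e)$ and hence preserves the strict inequality, while bounding $W_0$ below by $\e$; thus we may also assume $t_H(W_0)>0$, i.e.\ that $c_0:=t_H(W_0)/\theta^{{\rm e}(H)}$ lies strictly in $(0,1)$. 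Finally, one checks that a non-Sidorenko $H$ must have ${\rm e}(H)\ge 2$ and ${\rm v}(H)\ge r+1$ (otherwise $H$ is trivially Sidorenko), so $\tfrac{{\rm v}(H)-r}{{\rm e}(H)-1}>0$.

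For an integer $k$ to be chosen, let $W_k:=W_0^{\otimes k}$ be the $k$-fold tensor power, regarded as an $r$-graphon after identifying $[0,1]^k\cong[0,1]$; then $t_F(W_k)=t_F(W_0)^k$ for every $r$-graph $F$, so $t_{K_r}(W_k)=\theta^k$ and $t_H(W_k)=c_0^k\theta^{k\,{\rm e}(H)}$. Sample $x_1,\dots,x_n\in[0,1]$ independently and uniformly, and include each $r$-subset $S=\{i_1,\dots,i_r\}\subseteq[n]$ as an edge independently with probability $W_k(x_{i_1},\dots,x_{i_r})$ (well defined by symmetry of $W_k$). The expected number of edges is $\binom nr\theta^k$, and for any injection $\phi\colon V(H)\hookrightarrow[n]$ the values $(x_{\phi(u)})_{u\in V(H)}$ are i.i.d.\ uniform while the images of distinct edges of $H$ are distinct $r$-sets, so $\phi$ is an embedding with probability exactly $t_H(W_k)$; hence the expected number of labelled copies of $H$ is at most $n^{{\rm v}(H)}c_0^k\theta^{k\,{\rm e}(H)}$. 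Deleting one edge from each copy of $H$ then produces an $H$-free $r$-graph on $[n]$ with at least $\binom nr\theta^k-n^{{\rm v}(H)}c_0^k\theta^{k\,{\rm e}(H)}$ edges in expectation.

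It remains to choose $k=k(n)$. Writing $\eta:=c_0\theta^{{\rm e}(H)-1}$, which lies strictly in $(0,1)$ because the non-Sidorenko inequality is strict, the subtracted term drops below half of $\binom nr\theta^k$ as soon as $\eta^{k}\lesssim n^{-({\rm v}(H)-r)}$ (using $\binom nr\ge r^{-r}n^{r}$), so I would take $k=\Theta(\log n)$, precisely $k:=\big\lceil\tfrac{({\rm v}(H)-r)\ln n+\ln(2r^r)}{\ln(1/\eta)}\big\rceil$. For this $k$ one has $\eta^{k}=\Theta\big(n^{-({\rm v}(H)-r)}\big)$, hence $\theta^{k}=\Theta\big(n^{-({\rm v}(H)-r)/({\rm e}(H)-1)}\,c_0^{-k/({\rm e}(H)-1)}\big)$, and since $c_0<1$ and $k\ge\tfrac{({\rm v}(H)-r)\ln n}{\ln(1/\eta)}$ the factor $c_0^{-k/({\rm e}(H)-1)}$ is at least $n^{c}$ with
\[
c:=\frac{({\rm v}(H)-r)\,\ln(1/c_0)}{({\rm e}(H)-1)\,\ln(1/\eta)}>0 .
\]
Therefore the deletion step leaves an $H$-free $r$-graph with at least a constant multiple of $n^{r}\theta^{k}=\Theta\big(n^{\,r-({\rm v}(H)-r)/({\rm e}(H)-1)+c}\big)$ edges, which gives $\ext(n,H)\ge\gamma\,n^{\,r-({\rm v}(H)-r)/({\rm e}(H)-1)+c}$ for all large $n$ and a suitable $\gamma>0$; shrinking $\gamma$ then covers the remaining $n$, which is the claimed bound.

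The one genuinely non-routine decision is letting the tensor power $k$ grow with $n$: a fixed witness $W_0$ only improves the deletion bound by a constant factor. The mechanism is that tensoring makes both the edge density $\theta^k$ and the ``$H$-heaviness'' $\eta^k$ decay geometrically in $k$, and because the non-Sidorenko inequality is \emph{strict} one has $\eta<\theta^{{\rm e}(H)-1}$; optimising the deletion bound then balances these two geometric rates against the polynomial factor $n^{{\rm v}(H)-r}$ at $k=\Theta(\ln n)$, which is exactly what converts the constant-factor gap into the power saving $c$. Everything else — reducing to a witness with $c_0,\theta\in(0,1)$ via the perturbation, and tracking the constants through the choice of $k$ — is straightforward bookkeeping.
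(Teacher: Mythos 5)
Your proof is correct and uses the same core mechanism as the paper: amplify the strict Sidorenko deficit via tensor powers until it becomes a polynomial saving, then apply the deletion method. The execution differs only cosmetically --- the paper discretizes the witness graphon to a fixed finite graph $G$, tensor-powers that to $G^{\otimes N}$ with $n=|V(G)|^N$, and separately random-sparsifies with a probability $p$, whereas you tensor-power the graphon itself to depth $k=\Theta(\log n)$ and sample a $W$-random graph directly, letting $k$ control the edge density and thereby sidestepping both the discretization lemma and the interpolation between powers of $|V(G)|$.
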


One reason this result is interesting is that, by a result of Ferber, McKinley and Samotij~\cite[Theorem 9]{FMS20}, any polynomial gain over the deletion  bound for the extremal number of an $r$-graph~$H$ implies an optimal counting result for the number of $H$-free $r$-graphs on $n$ vertices. Thus, we have the following corollary of~\cref{thm:extremal_number}.

\begin{corollary}\label{cor:count}
For any non-Sidorenko $r$-graph $H$, there exists $C > 0$ and an infinite sequence of positive integers $n$ such that
\[|\mathcal{F}_n(H)| \leq 2^{C \cdot \ext(n, H)},\]
where $\mathcal{F}_n(H)$ is the set of all labelled $H$-free $r$-graphs with vertex set $\{1, 2, \dots, n\}$.
\end{corollary}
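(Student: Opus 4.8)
The plan is to obtain \cref{cor:count} as an immediate consequence of \cref{thm:extremal_number} together with the hypergraph counting theorem of Ferber, McKinley and Samotij, \cite[Theorem 9]{FMS20}. Write
\[
\alpha(H) := r - \frac{{\rm v}(H)-r}{{\rm e}(H)-1}
\]
for the exponent in the probabilistic deletion bound $\ext(n,H) \ge \gamma_0\, n^{\alpha(H)}$ recalled above (here ${\rm e}(H)\ge 2$, which is automatic for any $H$ for which Sidorenko's inequality can fail). \cref{thm:extremal_number} sharpens this to $\ext(n,H) \ge \gamma\, n^{\alpha(H)+c}$ for fixed constants $c,\gamma > 0$; that is, $H$ enjoys a polynomial gain over the deletion bound, uniformly in $n$.

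Next I would quote \cite[Theorem 9]{FMS20}, which states, in essence, that any $r$-graph $H$ satisfying $\ext(n,H) \ge n^{\alpha(H)+\delta}$ for some fixed $\delta>0$ (along an infinite set of $n$) admits a constant $C=C(H,\delta)>0$ for which $|\mathcal{F}_n(H)| \le 2^{C\cdot\ext(n,H)}$ holds along an infinite sequence of $n$; this matches the trivial lower bound $|\mathcal{F}_n(H)| \ge 2^{\ext(n,H)}$, valid since every subgraph of an extremal $H$-free $r$-graph is itself $H$-free, so the exponent is optimal up to the value of the constant. Applying this with $\delta = c$ and feeding in the bound of \cref{thm:extremal_number} yields \cref{cor:count} verbatim. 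The restriction of the conclusion to an infinite sequence of $n$, rather than to all large $n$, is inherited directly from \cite[Theorem 9]{FMS20}: the self-improvement step there, used to remove the lower-order loss present in the naive container estimate, naturally passes to a subsequence. The one point that genuinely requires care is bookkeeping --- one must confirm that the baseline exponent against which \cite{FMS20} measures the polynomial gain is exactly the $\alpha(H)$ above (it is, being the exponent of the balanced alteration bound) and that $H$ meets whatever mild side hypotheses appear in their statement.

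Finally I would remark on the scope of \cref{cor:count}. When $H$ is not $r$-partite one has $\pi(H)>0$, hence $\ext(n,H)=\Theta(n^r)$, and the bound is trivial because $|\mathcal{F}_n(H)| \le 2^{\binom{n}{r}} = 2^{O(\ext(n,H))}$; moreover every such $H$ is indeed non-Sidorenko, as one sees by taking $W$ to be the symmetric $\{0,1\}$-valued $r$-graphon supported on those $r$-tuples that are rainbow with respect to a partition of $[0,1]$ into $r$ intervals of equal length, for which $t_H(W)=0$ while $t_{K_r}(W)^{{\rm e}(H)}>0$. The substantive content of \cref{cor:count} is therefore the case of $r$-partite non-Sidorenko $H$, such as the loose triangle or the $3$-partite tight cycles produced later in the paper. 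Beyond \cref{thm:extremal_number} itself there is essentially no obstacle here, the deduction being a black-box invocation of \cite[Theorem 9]{FMS20}; if any step must be flagged as delicate, it is merely checking that the quantitative form of the gain --- a fixed power $n^c$ with $c$ independent of $n$ --- is precisely what the Ferber--McKinley--Samotij machinery demands.
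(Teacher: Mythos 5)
Your deduction is correct and is essentially the paper's own argument: the paper likewise treats \cref{cor:count} as an immediate black-box consequence of \cref{thm:extremal_number} combined with \cite[Theorem 9]{FMS20}, which converts any fixed polynomial gain over the deletion-method exponent into the stated counting bound along an infinite sequence of $n$. Your additional remarks (optimality of the exponent, the trivial non-$r$-partite case) are fine but not needed.
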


We note in passing that results similar to \cref{thm:extremal_number} and \cref{cor:count} were obtained recently by Conlon, Pohoata and Zakharov~\cite{CPZ21} for $H = K_{2,2,\dots,2}$, the complete $r$-partite $r$-graph with two vertices in each part. However, since Sidorenko's conjecture does hold for these graphs through some standard applications of the Cauchy--Schwarz inequality, their proof proceeds along very different lines, making use of a multilinear variant of Bukh's random algebraic method~\cite{B15}.

Motivated by \cref{thm:extremal_number} and its application \cref{cor:count}, much of this paper is devoted to finding examples of $r$-partite $r$-graphs for which Sidorenko's conjecture is false. For instance, if we define the $r$-uniform \emph{loose triangle} to be the $r$-graph with vertex set $\{1, 2, 3, \dots, 3r-3\}$ and edges $\{1, 2, \dots, r\}$, $\{r, r+1, \dots, 2r-1\}$, $\{2r-1, \dots, 3r-3, 1\}$, then we have the following result. Note that here a \emph{linear $r$-graph} is an $r$-graph where every pair of edges intersect in at most one vertex.

\begin{theorem}\label{thm:linear}
    Any linear $r$-graph that contains a loose triangle is not Sidorenko.
\end{theorem}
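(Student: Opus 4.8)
The plan is to disprove the inequality $t_H(W)\ge t_{K_r}(W)^{{\rm e}(H)}$ directly, by perturbing the constant $r$-graphon, $W=p+\epsilon U$, and using linearity to cancel the unhelpful low-order terms in $\epsilon$.

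Fix $p\in(0,1)$ and a bounded symmetric measurable $U\colon[0,1]^r\to\RR$, and set $W_\epsilon:=p+\epsilon U$, which is an $r$-graphon once $|\epsilon|$ is small. Expanding the product defining $t_H$ gives
\[
  t_H(W_\epsilon)=\sum_{S\subseteq E(H)}\epsilon^{|S|}\,p^{\,{\rm e}(H)-|S|}\int\prod_{f\in S}U(x_f)\,d\mu^{{\rm v}(H)},\qquad t_{K_r}(W_\epsilon)=p+\epsilon\textstyle\int U.
\]
I will require $U$ to have vanishing degree function, i.e.\ $\int_{[0,1]^{r-1}}U(x,\cdot)\,d\mu^{r-1}=0$ for almost every $x$; then $\int U=0$, so $t_{K_r}(W_\epsilon)^{{\rm e}(H)}=p^{{\rm e}(H)}$ and the terms with $|S|\le1$ already sum to $p^{{\rm e}(H)}$. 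Linearity then kills the $|S|=2$ term: for distinct edges $e,f$ we have $|e\cap f|\le1$, so integrating out the vertices of $e$ lying outside $f$ (which occur only in $U(x_e)$) yields either $\int_{[0,1]^r}U=0$ (if $e\cap f=\emptyset$) or the degree function of $U$ evaluated at the unique common vertex (if $|e\cap f|=1$), hence $0$ in either case. Thus $t_H(W_\epsilon)=p^{{\rm e}(H)}+c\,\epsilon^3+O(\epsilon^4)$, where $c=p^{\,{\rm e}(H)-3}\sum_{\{e,f,g\}}\int U(x_e)U(x_f)U(x_g)\,d\mu^{{\rm v}(H)}$, the sum being over unordered triples of distinct edges.

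Now I identify the triples contributing to $c$. The same integrating-out step shows that a triple $\{e,f,g\}$ contributes $0$ unless each of its edges meets the union of the other two in at least two vertices; combined with linearity ($|e\cap f|\le1$ for all pairs), a short case check forces each pair among $e,f,g$ to intersect in exactly one vertex, with the three intersection vertices distinct, i.e.\ forces $\{e,f,g\}$ to span a loose triangle in $H$. For such a triple, integrating out every vertex except the three cross-vertices leaves $\int \widetilde U(x,y)\,\widetilde U(y,z)\,\widetilde U(z,x)\,dx\,dy\,dz$, where $\widetilde U(x,y):=\int_{[0,1]^{r-2}}U(x,y,\cdot)\,d\mu^{r-2}$ is the codegree function of $U$. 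Hence $c=p^{\,{\rm e}(H)-3}\,N\int \widetilde U(x,y)\widetilde U(y,z)\widetilde U(z,x)\,dx\,dy\,dz$, where $N\ge1$ is the number of loose triangles in $H$.

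Finally, choose $U$ so that the last integral is nonzero: let $\phi\colon[0,1]\to\{-1,1\}$ have $\int_0^1\phi=0$ and put $U(x_1,\dots,x_r):=\sum_{1\le i<j\le r}\phi(x_i)\phi(x_j)$, a symmetric function for which one checks directly that $\int_{[0,1]^{r-1}}U(x,\cdot)=0$ and $\widetilde U(x,y)=\phi(x)\phi(y)$, so that $\int\widetilde U(x,y)\widetilde U(y,z)\widetilde U(z,x)\,dx\,dy\,dz=\big(\int_0^1\phi^2\big)^3=1$. Then $c=p^{\,{\rm e}(H)-3}N>0$, and taking $\epsilon<0$ with $|\epsilon|$ small yields $t_H(W_\epsilon)<p^{{\rm e}(H)}=t_{K_r}(W_\epsilon)^{{\rm e}(H)}$, so $H$ is not Sidorenko. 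The one subtle point — and the reason linearity is essential — is the vanishing of the second-order term: a perturbation of the constant graphon always has a nonnegative $\epsilon^2$ coefficient, each pairwise contribution being the integral of a square, so some structural hypothesis is needed before a perturbation can beat the Sidorenko bound; linearity is precisely what makes all those squares be squares of the identically zero degree function, handing control to the sign-indefinite $\epsilon^3$ term, which is governed by the triangle density of $\widetilde U$. The rest is bookkeeping — verifying that no other cubic configurations survive and that the tail is $O(\epsilon^4)$ uniformly for small $\epsilon$.
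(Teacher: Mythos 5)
Your proof is correct and is essentially the same as the paper's argument for its slightly more general Theorem~\ref{thm:linear_girth}: the paper perturbs the constant weighting by $f = 1 - c\sum_{i<j}x_i x_j$ on $\{\pm1\}$ and expands into monomials (keeping those with all degrees even, which by linearity correspond to Berge even subgraphs of the $1$-skeleton, the smallest being a shortest loose cycle), while you perturb by $p + \epsilon\sum_{i<j}\phi(x_i)\phi(x_j)$ and organize the expansion by edge subsets, killing the low-order terms via the vanishing degree function. The constructions and the mechanism (the sign-indefinite odd-order term coming from the shortest loose cycle) coincide; the paper simply carries the same computation through for arbitrary odd girth rather than girth three only.
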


By the celebrated $(6,3)$-theorem of Ruzsa and Szemer\'edi~\cite{RS78}, which states that dense linear $r$-graphs contain loose triangles, we have the following corollary.

\begin{corollary}\label{cor:RSz}
For any integer $r \geq 3$ and any $c > 0$, there exists $k_0$ such that any linear $r$-graph with $k \geq k_0$ vertices and at least $c k^2$ edges is not Sidorenko.
\end{corollary}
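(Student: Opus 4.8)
The plan is to deduce the statement directly from \cref{thm:linear} together with the $(6,3)$-theorem, so the argument is short. First I would fix $r \geq 3$ and $c > 0$ and quote the $(6,3)$-theorem in the form asserted in the excerpt: there is a threshold $k_0 = k_0(r,c)$ such that every linear $r$-graph on $k \geq k_0$ vertices with at least $ck^2$ edges contains a loose triangle. For $r = 3$ this is the classical Ruzsa--Szemer\'edi statement~\cite{RS78} that a $3$-graph in which no $6$ vertices span $3$ edges has $o(k^2)$ edges, combined with the elementary observation that in a \emph{linear} $3$-graph the only way for $6$ vertices to span $3$ edges is for those edges to form a loose triangle (any other configuration of three pairwise-nearly-disjoint triples spans at least $7$ vertices); for general $r$ one uses the same counting observation, or the standard hypergraph generalization of the $(6,3)$-theorem.

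Next, given any linear $r$-graph $G$ with $k \geq k_0$ vertices and at least $ck^2$ edges, the previous step produces a copy of the loose triangle inside $G$. Since $G$ is itself linear by hypothesis, \cref{thm:linear} applies verbatim and yields that $G$ is not Sidorenko, which is exactly the claim. (If no linear $r$-graph with the stated parameters exists for a given $k$, the assertion is vacuous for that $k$, so there is nothing further to check.)

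The only real obstacle is the bookkeeping around the $(6,3)$-theorem: one must be careful to invoke, or briefly justify, the version valid for all $r \geq 3$ rather than only $r = 3$, and to confirm that ``no $6$ vertices span $3$ edges'' translates, under linearity, precisely into ``no loose triangle.'' Beyond that there is nothing to do --- \cref{thm:linear} is purely qualitative, so a single loose triangle in $G$ suffices and no quantitative control on the number of copies, nor on the rate of growth of $k_0$, is needed.
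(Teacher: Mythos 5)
Your proof is correct and follows essentially the same route as the paper: invoke the $(6,3)$-theorem (in its general form for linear $r$-graphs) to produce a loose triangle, then apply \cref{thm:linear}. The paper treats this as an immediate consequence and gives no further proof; your added detail --- that linearity forces three edges on $3r-3$ vertices to be exactly a loose triangle, and that the generalization to $r > 3$ needs to be noted --- is a reasonable and accurate elaboration of the same argument.
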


While the extremal number is known exactly for some sparse linear $r$-graphs such as loose paths and cycles~\cite{KMV15}, these results, applied in conjunction with~\cref{thm:extremal_number}, give the first polynomial improvement on the lower bound for the extremal number of a broad range of linear $r$-graphs.

In a somewhat different direction, we look at the tight cycles $C_\ell^{(r)}$ with vertex set $\{1, 2, \dots, \ell\}$ and edges $\{i, i+1, \dots, i+r-1\}$ for all $i = 1, 2, \dots, \ell$, where addition is taken mod $\ell$. From an extremal viewpoint, these are some of the most closely studied  hypergraphs (see, for example,~\cite{KLP22,L23,MPS,Raz10}). We will show that, at least for certain choices of $\ell$ and $r$, they are again not Sidorenko. In the statement below, we also consider the $r$-graphs $C_\ell^{(r)} - e$ obtained by deleting a single edge $e$ from $C_\ell^{(r)}$.

\begin{theorem}\label{thm:tight_cycles}
$C_k^{(3)}$ is not Sidorenko for any $k \geq 4$, $C_k^{(3)} - e$ is not Sidorenko for any $k \geq 7$ and $C_{2r}^{(r)}$ is not Sidorenko for any odd $r \geq 3$.
\end{theorem}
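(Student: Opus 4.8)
The plan is to disprove the Sidorenko inequality \eqref{eq:Sidorenko} for each of these hypergraphs by exhibiting an explicit $r$-graphon $W$ — in fact, a small perturbation of the constant graphon — for which $t_H(W) < t_{K_r}(W)^{{\rm e}(H)}$. Concretely, I would write $W = p + \epsilon U$ where $p \in (0,1)$ is constant, $U:[0,1]^r\to\RR$ is a bounded symmetric function with $\int U\, d\mu^r = 0$, and $\epsilon$ is a small real parameter. Expanding $t_H(p+\epsilon U)$ as a polynomial in $\epsilon$, the constant term is $p^{{\rm e}(H)} = t_{K_r}(W)^{{\rm e}(H)}$ (using $\int U = 0$ to kill the linear term as well, since each linear contribution integrates a single copy of $U$ against constants), so the sign of $t_H(W) - t_{K_r}(W)^{{\rm e}(H)}$ for small $\epsilon$ is governed by the coefficient of $\epsilon^2$. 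That coefficient is, up to the positive factor $p^{{\rm e}(H)-2}$, a sum over unordered pairs of edges $\{f,g\}$ of $H$ of the quantity $I(f,g) := \int U_f U_g\, d\mu^{V}$, where $U_f$ denotes $U$ evaluated on the coordinates of the edge $f$; this $I(f,g)$ depends only on the intersection pattern $f\cap g$. So the whole problem reduces to choosing $U$ so that $\sum_{\{f,g\}} I(f,g) < 0$.

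The key step is thus a good choice of $U$. The natural candidates are \emph{eigenfunctions}: take an eigenfunction basis of the relevant integral operators, i.e. let $U(x_1,\dots,x_r) = \prod_{j} \phi(x_j)\cdot(\text{symmetrization})$ built from a single nonconstant real function $\phi:[0,1]\to\RR$ with $\int\phi = 0$ and $\int\phi^2 = 1$; more flexibly, work with $\phi$ taking values $\pm 1$ on two halves of $[0,1]$, or use complex characters if that is cleaner and then take real parts. With such a product form, $I(f,g)$ becomes a product over coordinates: if $f$ and $g$ share a set $S$ of vertices, then $I(f,g) = (\int \phi^2)^{|S|}\cdot(\int\phi)^{2(r-|S|)} \cdot(\ldots)$, which vanishes unless $f$ and $g$ share \emph{all} $r$ of their vertices — too crude. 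The fix, standard in this area, is to attach a distinct sign or distinct function to each coordinate \emph{slot} of $H$ in a consistent way, so that $I(f,g)$ equals $1$ when $f,g$ overlap "in phase" and $-1$ (or $0$) otherwise; for tight cycles the cyclic structure suggests indexing slots by $\ZZ_k$ and using roots of unity $\omega^{x}$ with $\omega = e^{2\pi i/k}$, exploiting that consecutive edges of $C_k^{(3)}$ overlap in two vertices whose "phases" differ by a fixed shift. One then computes $\sum_{\{f,g\}} I(f,g)$ as a short character sum over $\ZZ_k$ and checks it is negative; the three cases ($C_k^{(3)}$, $C_k^{(3)}-e$, $C_{2r}^{(r)}$) will each be a finite such computation, with the parity condition "$r$ odd" for $C_{2r}^{(r)}$ appearing exactly because the relevant sign product over the antipodal pairing of $[2r]$ is $(-1)^r$.

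For $C_{2r}^{(r)}$ with $r$ odd there is an even cleaner route I would try first: a \emph{two-point} graphon. Partition $[0,1] = A \sqcup B$ with $|A| = |B| = 1/2$ and let $W$ take one value on $r$-tuples according to the parity of $|\{i : x_i \in A\}|$ — this is a genuine (not perturbative) construction and tight-cycle homomorphism counts in it are governed by a transfer-matrix/trace computation around $\ZZ_{2r}$; the failure of Sidorenko should pop out of an eigenvalue of a $2\times 2$ matrix being negative, which is visible precisely when $2r/r = 2$ interacts with the oddness of $r$. If the perturbative second-moment computation for the general tight-cycle cases becomes unwieldy, I would fall back on presenting the two-point construction for $C_{2r}^{(r)}$ and the explicit small cases $C_4^{(3)}, C_5^{(3)},\dots$ checked by hand, then bootstrap to all $k$ by the standard observation that a non-Sidorenko subgraph on the same vertex classes forces the larger graph to be non-Sidorenko — though one must be careful, since that inheritance is \emph{not} automatic for hypergraphs, so the likely mechanism is instead an explicit "gluing" or tensor-power argument specific to tight cycles.

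The main obstacle I anticipate is precisely this last point: unlike in the graph case, non-Sidorenko-ness does not obviously pass to supergraphs or to cycles of larger length, so handling \emph{all} $k\ge 4$ (resp.\ all $k \ge 7$ for $C_k^{(3)}-e$) uniformly — rather than case by case — requires finding a single parametrized family of perturbations $U = U_k$ whose second-moment sum is negative for every $k$ in the range, and verifying the sign of that character sum in closed form. Getting the character sum to be manifestly negative for all large $k$, and separately clearing the finitely many small $k$ by direct computation, is where the real work lies.
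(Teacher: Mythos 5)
Your core plan --- expanding $t_H(p+\epsilon U)$ in $\epsilon$ and engineering a negative $\epsilon^2$ coefficient --- cannot work. With $\int U\,d\mu^r = 0$, that coefficient equals $p^{{\rm e}(H)-2}\sum_{\{f,g\}}\int U_f U_g\,d\mu$ (sum over unordered pairs of \emph{distinct} edges), and every term in this sum is automatically nonnegative: if $S=f\cap g$, then integrating out the coordinates of $f\setminus S$ and of $g\setminus S$ gives, by the symmetry of $U$, the same marginal $V(x_S)$ in both factors, so $\int U_f U_g\,d\mu = \int V^2\,d\mu^{|S|} \ge 0$ (and $(\int U\,d\mu^r)^2 = 0$ when $S=\emptyset$). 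This holds for any simple $r$-graph $H$ and any bounded symmetric $U$ of mean zero, so the quadratic term is always $\ge 0$; the ``attach distinct phases to distinct slots'' fix is blocked because the graphon $U$ must be symmetric in its $r$ arguments, and the marginal argument survives any such attempt. Failure of Sidorenko for these tight cycles only appears at order $\epsilon^{{\rm v}(H)}$, far beyond the quadratic regime you are analysing.

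The gadget you are missing is the specific univariate function $f_\varepsilon$ used in \cref{th:root}: it satisfies $\int f_\varepsilon\,d\mu = 0$ and $\int f_\varepsilon^d\,d\mu = (-1)^d\varepsilon + O(\varepsilon^2)$ for every fixed $d\ge 2$. This is \emph{not} the $\pm 1$ indicator of your two-point graphon, which would give $\int f^d \in\{0,1\}$ by parity and so only detect subgraphs all of whose degrees are even --- too restrictive. With $h_{\varepsilon,c} = 1 + c\prod_i f_\varepsilon(x_i)$, every subgraph $G\subseteq H$ with $\delta(G)\ge 2$ contributes a term of order $\varepsilon^{{\rm v}(G)}$ and sign $(-c)^{{\rm e}(G)}$ (here $r$ odd is used to equate $\sum_v d_G(v)$ and ${\rm e}(G)$ modulo $2$); for a subgraph of $C_{kr}^{(r)}$ with $\delta\ge 2$ one has ${\rm v}(G)=kr$, so the leading correction is $\varepsilon^{kr}P_H(-c)$ where $P_H(x)=\sum_m\kappa_m(H)x^m$ and $\kappa_m$ counts $m$-edge subgraphs of $H$ with no degree-one vertex. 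The theorem then reduces to exhibiting $c\in(0,1)$ with $P_H(-c)<0$ for $H\in\{C_{3k}^{(3)},\,C_{3k}^{(3)}-e,\,C_{2r}^{(r)}\}$, which the paper carries out by computing the $\kappa_m$'s explicitly as binomial expressions (\cref{th:Pk3c}, \cref{th:Pk3-ec}, \cref{th:P2rc}) and bounding the alternating sums uniformly in $k$ after a few small cases. This uniform polynomial estimate is exactly what sidesteps the ``bootstrap from small $k$ to all $k$'' obstacle you correctly identify but do not resolve.
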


There are some recent results~\cite{BL23,KLP22} that determine the Tur\'an densities of $C_k^{(3)}$ and $C_k^{(3)}-e$ when $k$ is sufficiently large and \emph{not} divisible by $3$. 
\cref{thm:tight_cycles} gives the first non-trivial improvement on the lower bounds for the extremal numbers of $C_k^{(3)}$ and $C_k^{(3)}-e$ when $k$ \emph{is} divisible by $3$.

We also give some examples of $r$-graphs with the stronger property that they are not common. 
By saying that an $r$-graph $H$ is \emph{common}, we mean that $t_H(W) + t_H(1-W) \geq 2^{1-{\rm e}(H)}$ 
for every $r$-graphon $W:[0,1]^r \rightarrow [0,1]$. 
In graph-theoretic terms, an $r$-graph $H$ is common if the number of monochromatic copies of $H$ in a two-colouring of the edges of $K_n^{(r)}$ is asymptotically minimised by a quasirandom colouring. The study of such graphs is a central topic in Ramsey theory and we refer the interested reader to~\cite{KVW22} for further context and additional references. For us, the important point is that 
if an $r$-graph is Sidorenko, 
it is automatically common, so non-common $r$-graphs are automatically not Sidorenko. As it involves some further notation, 
we will hold off on giving a full description of our main result in this direction until Section~\ref{sec:common} and instead give an illustrative example.

\begin{theorem}\label{thm:grids}
For $r$ odd, the grid $r$-graph $G_r$ whose vertices are the points of the $r \times r$ grid 
and whose edges are the $2r$ horizontal and vertical lines of the grid is not common. 
\end{theorem}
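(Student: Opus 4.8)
The plan is to disprove commonness of $G_r$ by perturbing the constant $r$‑graphon $\tfrac12$, reducing everything to the sign of a single density $t_{G_r}(U)$, and then to build the test function $U$ using the parity of $r$.

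\smallskip
\noindent\emph{Step 1: reduction to $t_{G_r}(U)<0$.} Since $G_r$ has $2r$ edges, commonness demands $t_{G_r}(W)+t_{G_r}(1-W)\ge 2^{1-2r}$ for every $r$‑graphon $W$. I would set $W=\tfrac12+\varepsilon U$ for a symmetric measurable $U\colon[0,1]^r\to\RR$ and $\varepsilon>0$ small enough that $W$ is $[0,1]$‑valued. Expanding the product over edges and using the substitution $\varepsilon\mapsto-\varepsilon$ (which swaps $W$ and $1-W$) to cancel the odd‑order terms gives
\[
t_{G_r}(W)+t_{G_r}(1-W)\;=\;2^{1-2r}\;+\;2\!\!\sum_{\substack{\emptyset\ne S\subseteq E(G_r)\\ |S|\text{ even}}}\!\!2^{|S|-2r}\,\varepsilon^{|S|}\,t_{G_r[S]}(U),
\]
where $G_r[S]$ is the sub‑$r$‑graph with edge set $S$. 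If $U$ is chosen so that every one‑variable marginal $\int_{[0,1]}U(x_1,\dots,x_r)\,dx_k$ vanishes identically, then $t_{G_r[S]}(U)=0$ for every proper nonempty $S$: such an $S$ is a union of some rows and some columns of the grid that is neither empty nor everything, so it contains a grid point lying in exactly one of its edges, and integrating out that variable annihilates the product. Hence only $S=E(G_r)$ (of even size $2r$) survives, leaving $t_{G_r}(W)+t_{G_r}(1-W)=2^{1-2r}\bigl(1+4^r\varepsilon^{2r}t_{G_r}(U)\bigr)$, so it suffices to exhibit such a $U$ with $t_{G_r}(U)<0$. (Presumably this is exactly the general non‑commonness criterion of Section~\ref{sec:common} specialised to $G_r$.)

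\smallskip
\noindent\emph{Step 2: rewriting $t_{G_r}(U)$, and where oddness enters.} Writing $X=(x_{ij})_{i,j\le r}$ for the array of integration variables and $F(X)=\prod_{i=1}^r U(x_{i1},\dots,x_{ir})$, one has $t_{G_r}(U)=\langle F,F\circ\tau\rangle$, where $\tau$ is the (measure‑preserving, involutive) transpose map on $[0,1]^{r^2}$. Expand $U$ in an orthonormal basis of $L^2[0,1]$ whose first element is $\mathbf 1$; the marginal condition says the symmetric coefficient tensor of $U$ is supported on all‑positive multi‑indices, so $U$ uses only mean‑zero basis functions. Because $r$ is \emph{odd}, such a symmetric tensor has a \emph{plus‑sign} decomposition $\sum_{k=1}^K(v^{(k)})^{\otimes r}$ into symmetric rank‑one tensors (for even $r$ a genuine minus sign can be unavoidable — this is precisely where the hypothesis that $r$ is odd is used), and one may take each $v^{(k)}$ mean‑zero. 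Substituting and contracting collapses the count to
\[
t_{G_r}(U)\;=\;\sum_{\mathbf k\in[K]^r}\Bigl(\ \sum_{k\in[K]}\ \prod_{j=1}^r\Lambda_{k,\mathbf k_j}\Bigr)^{\!r},
\qquad \Lambda=\bigl(\langle v^{(k)},v^{(\ell)}\rangle\bigr)_{k,\ell}\succeq 0 ,
\]
and, conversely, any positive semidefinite $\Lambda$ arises this way (realise it as a Gram matrix inside the mean‑zero subspace of $L^2[0,1]$, which also re‑installs the marginal condition). For $r$ even the analogous expression is forced to be $\ge 0$; for $r$ odd the inner sums $\sum_k\prod_j\Lambda_{k,\mathbf k_j}$ can be negative, and raised to the odd power $r$ they can pull the total below zero. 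So the problem is reduced to: find a positive semidefinite matrix $\Lambda$ making the displayed sum negative.

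\smallskip
\noindent\emph{Step 3: the construction — the main obstacle.} The remaining and hardest step is to produce such a $\Lambda$. The obstruction is that the ``diagonal'' contributions are large and positive: for a constant tuple $\mathbf k=(\ell,\dots,\ell)$ the inner sum is $\sum_k\Lambda_{k\ell}^{\,r}$, and positive semidefiniteness forces $\Lambda_{\ell\ell}\ge 0$, so $\Lambda$ must be kept well away from both the identity and a rank‑one matrix, with enough negative off‑diagonal structure that the non‑constant tuples $\mathbf k$ yield negative inner sums which, weighted by the relevant multinomial counts, outweigh the positive part. I would search for the extremal $\Lambda$ among small, highly symmetric families (a two‑ or three‑parameter matrix, or a circulant whose spectrum is pinned to be nonnegative), reduce the resulting polynomial inequality in the entries of $\Lambda$, and then translate the optimal $\Lambda$ back into an explicit $U$ (equivalently, a small weighted $r$‑graph to blow up) and verify $t_{G_r}(U)<0$ directly. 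Making this optimisation close — identifying the right $\Lambda$ and checking the inequality — is what I expect to be the technical heart of the proof.
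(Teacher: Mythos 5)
Your Step~1 is sound and matches the general machinery of Section~\ref{sec:common}: with $W = \tfrac12 + \varepsilon U$, only even subgraphs of $G_r$ contribute, the zero-marginal condition kills every proper nonempty subgraph of $G_r$ because each has a degree-one vertex, and so the sign of $t_{G_r}(U)$ decides the issue. Your Step~2 reduction to a positive semidefinite Gram matrix $\Lambda$ with $\sum_{\mathbf a}\bigl(\sum_\ell\prod_i\Lambda_{a_i,\ell}\bigr)^r<0$ is also correct, and the observation that oddness of $r$ is what permits a pure-plus-sign decomposition $\sum_k(v^{(k)})^{\otimes r}$ (hence the possibility of a negative value) is the right conceptual use of the parity hypothesis.

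The genuine gap is Step~3: you have reformulated the problem but not solved it. You never actually produce a $\Lambda$ (equivalently a $U$) witnessing $t_{G_r}(U)<0$, and you say yourself that ``identifying the right $\Lambda$ and checking the inequality'' is the expected technical heart. That is precisely the piece that is missing. The paper sidesteps this by a different route: it does not search for a zero-averaging witness directly. Instead, it \emph{cites} the fact that $G_r$ is non-positive for odd $r$ (Proposition~1.5 of~\cite{Sid22}), which only supplies a general signed function $U$ with $t_{G_r}(U)<0$, and then uses a bootstrap (\cref{th:zero} and \cref{th:zero_r}, via the Levi graph and a blow-up construction inside a $d$-regular bipartite graph of large girth) to upgrade any non-positivity witness of a $2$-connected even-edge hypergraph into a zero-averaging one. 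This is what \cref{th:even} encapsulates, and the application in \cref{exm:grid} is then one line: every proper subgraph of $G_r$ has a degree-one vertex, so \cref{th:even} applies with $H=G_r$ itself as the non-positive $2$-connected even subgraph. To finish your argument, you would either need to carry out the optimisation in Step~3 explicitly, or realise that you can relax the zero-averaging requirement and invoke the known non-positivity of $G_r$ together with the bootstrap argument (the second option is essentially the paper's proof).
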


Unlike our previous results, this does not allow us to give an improved bound for $\ext(n, G_r)$, since, by considering all of the edges containing a fixed vertex, we get the simple lower bound $\ext(n, G_r) \ge \binom{n-1}{r-1}$, which is considerably better than the deletion bound. However, the grid graphs are an interesting and well-studied family (see, for example,~\cite{FR13, GS22, Sid22}), so we believe the fact that its odd members are not common is an interesting result in its own right.

\section{Lower bounds for the extremal number}

In this short section, we will use the tensor power trick to prove Theorem~\ref{thm:extremal_number}, the statement that the deletion bound may be improved for counterexamples to Sidorenko's conjecture. We will need the following standard result from the theory of graph limits (see, for example,~\cite{L12}), obtained by sampling $n$ vertices $v_1, v_2, \dots, v_n$ independently and uniformly at random from $[0,1]$ and placing an edge on each $v_{i_1}, v_{i_2}, \dots, v_{i_r}$ with $i_1 < i_2 < \dots < i_r$ independently with probability $W(v_{i_1}, v_{i_2}, \dots, v_{i_r})$.

\begin{lemma}\label{lem:discretise}
Let $W$ be an $r$-graphon. Then there exists a sequence $(G_n)_{n=1}^{\infty}$ of $r$-graphs such that $|V(G_n)|=n$ and $t_F(G_n)$ converges to $t_F(W)$ for every fixed $r$-graph $F$.
\end{lemma}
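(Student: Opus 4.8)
The plan is to take $(G_n)_{n=1}^\infty$ to be the random $r$-graphs obtained by the sampling procedure described just above the lemma, to show that almost surely $t_F(G_n)\to t_F(W)$ for every fixed $r$-graph $F$, and then to extract a single deterministic sequence. Throughout, fix a finite $r$-graph $F$ on vertex set $[k]$ and recall that for an $n$-vertex $r$-graph $G$ we have $t_F(G)=\hom(F,G)/n^{k}$, where $\hom(F,G)$ counts the maps $\phi\colon[k]\to V(G)$ sending every edge of $F$ to an edge of $G$. (If $F$ has no edges then $t_F\equiv 1$ and there is nothing to prove, so assume $k\ge r$ and ${\rm e}(F)\ge 1$.)

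\textbf{Expectation.} I would first condition on the sampled points $v_1,\dots,v_n$. If $\phi\colon[k]\to[n]$ is injective, then the $r$-sets $\{\phi(u_1),\dots,\phi(u_r)\}$, ranging over the edges $\{u_1,\dots,u_r\}$ of $F$, are pairwise distinct, so the corresponding edge-indicators of $G_n$ are independent given $v_1,\dots,v_n$; hence, using that $W$ is symmetric,
\[
\Pr\big[\phi\text{ is a homomorphism}\mid v_1,\dots,v_n\big]=\prod_{\{u_1,\dots,u_r\}\in E(F)}W\big(v_{\phi(u_1)},\dots,v_{\phi(u_r)}\big).
\]
Since $v_1,\dots,v_n$ are i.i.d.\ uniform on $[0,1]$ and $\phi$ is injective, averaging over $v$ turns the right-hand side into exactly $t_F(W)$. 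There are $n(n-1)\cdots(n-k+1)=(1-O(1/n))\,n^{k}$ injective maps, while each of the remaining $O(n^{k-1})$ non-injective maps contributes at most $1$ to $\hom(F,G_n)$; dividing by $n^{k}$ gives $\EE[t_F(G_n)]=t_F(W)+O(1/n)$.

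\textbf{Concentration.} Next I would view $t_F(G_n)$ as a function of the $n$ independent points $v_i$ together with the $\binom{n}{r}$ independent coin flips deciding the edges, and apply the bounded differences (McDiarmid) inequality. Resampling a single $v_i$ only changes the homomorphism status of maps $\phi$ with $i\in\phi([k])$, of which there are at most $k\,n^{k-1}$, so it changes $t_F(G_n)$ by at most $k/n$. Resampling the coin for a single $r$-set $S$ only affects maps $\phi$ sending some edge of $F$ onto $S$, of which there are at most ${\rm e}(F)\cdot r!\cdot n^{k-r}$, so it changes $t_F(G_n)$ by $O(n^{-r})$. The sum of squared Lipschitz constants is therefore $n\cdot O(n^{-2})+\binom{n}{r}\cdot O(n^{-2r})=O(1/n)$, and McDiarmid's inequality yields $\PP\big[\,|t_F(G_n)-\EE t_F(G_n)|>\e\,\big]\le 2\exp(-\Omega(\e^{2}n))$ for every $\e>0$.

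\textbf{Conclusion.} Combining the two estimates, $\PP\big[\,|t_F(G_n)-t_F(W)|>\e\,\big]$ is summable over $n$ for each $\e>0$, so Borel--Cantelli gives $t_F(G_n)\to t_F(W)$ almost surely for each fixed $F$. As there are only countably many isomorphism types of finite $r$-graphs, the countable intersection of these almost-sure events is again almost sure, so some realization of $(G_n)_{n=1}^{\infty}$ satisfies $t_F(G_n)\to t_F(W)$ for all $F$ simultaneously, which is the lemma. I expect the only delicate point to be the bookkeeping in the expectation step—isolating the injective maps, for which the edge-indicators are genuinely independent and average to $t_F(W)$, from the lower-order contribution of the non-injective ones—after which the concentration step is a routine application of bounded differences. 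Alternatively, one may simply quote the standard fact from graph limit theory that this sampled random $r$-graph converges to $W$ in the cut metric almost surely, which immediately implies convergence of all homomorphism densities.
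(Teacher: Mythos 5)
Your proof is correct and follows essentially the same route as the paper, which simply cites this as the standard sampling lemma from graph limit theory (obtained from the $W$-random $r$-graph construction described just before the statement). Your expectation estimate, bounded-differences concentration and Borel--Cantelli argument are exactly the standard way this cited fact is established, so there is nothing to add.
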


The \emph{tensor product} $G\otimes H$ of two $r$-graphs $G$ and $H$ is the graph with vertex set $V(G) \times V(H)$ where $((x_1, y_1), (x_2, y_2), \dots, (x_r, y_r)) \in E(G\otimes H)$ if and only if $(x_1, x_2, \dots, x_r) \in E(G)$ and $(y_1, y_2, \dots, y_r) \in E(H)$. For $N$ a positive integer, we may then define $G^{\otimes N}$ inductively by $G^{\otimes 1} = G$ and $G^{\otimes N} = G \otimes G^{\otimes N-1}$. A key property of these tensor powers, which we will need below, is that $t_H(G^{\otimes N}) = t_H(G)^N$ for any graphs $G$ and $H$ and any positive integer $N$.

\begin{proof}[\bf{Proof of \cref{thm:extremal_number}}]
Let $W$ be an $r$-graphon for which $t_H(W) < t_{K_r}(W)^{{\rm e}(H)}$. If $(G_m)_{m=1}^{\infty}$ is a sequence of $r$-graphs with $|V(G_m)| = m$ given by~\Cref{lem:discretise}, then, provided $m$ is sufficiently large, $t_H(G_m) < t_{K_r}(G_m)^{{\rm e}(H)}$. Let $G$ be an $r$-graph from this sequence for which this is the case and let $\alpha_0:=t_{K_r}(G)=r! {\rm e}(G)/{\rm v}(G)^{r}$ and $\beta_0:=t_H(G)$, so that $\beta_0 < \alpha_0^{{\rm e}(H)}$. We will assume that $G$ is taken sufficiently large that $\beta_0/\alpha_0 \geq {\rm v}(G)^{r - {\rm v}(H)}$.

Set $n:={\rm v}(G)^N$, $\alpha := \alpha_0^N$ and $\beta:=\beta_0^N$. Then $G^{\otimes N}$ is an $n$-vertex graph with $\alpha n^r/r!$ edges and at most $\beta n^{{\rm v}(H)}$ labelled copies of $H$. Let $c' := \frac{{\rm e}(H)\log \alpha_0 - \log \beta_0}{\log {\rm v}(G)}$, so that $c' > 0$ and $\beta = \alpha^{{\rm e}(H)} n^{-c'}$. Crucially, the number of copies of $H$ in $G^{\otimes N}$ is significantly smaller than the random count of roughly $\alpha^{{\rm e}(H)}n^{{\rm v}(H)}$, allowing us to apply the deletion method more efficiently. 

Indeed, if we take a random subgraph $(G^{\otimes N})_p$ of $G^{\otimes N}$ where every edge appears independently with probability $p$, the expected number of edges $X$ in this subgraph is $p \alpha n^r/r!$ and the expected number of copies $Y$ of $H$ is at most $(p \alpha)^{{\rm e}(H)} n^{{\rm v}(H) -c'}$. Note that the condition $\beta_0/\alpha_0 \geq {\rm v}(G)^{r - {\rm v}(H)}$ is equivalent to $\alpha_0^{{\rm e}(H)-1} \geq {\rm v(G)}^{r+c'-{\rm v}(H)}$, which in turn implies that $\alpha^{{\rm e}(H)-1} \geq n^{r+c'-{\rm v}(H)}$. Therefore, there is some $p < 1$ such that $(p \alpha)^{{\rm e}(H)-1} = n^{r+c'-{\rm v}(H)}/(2 r!)$. But then 
\[\mathbb{E}[Y] \leq (p \alpha)^{{\rm e}(H)} n^{{\rm v}(H) -c'} = \frac{p \alpha n^r}{2 r!} = \frac{1}{2} \mathbb{E}[X],\]
so, by linearity of expectation, $\mathbb{E}[X-Y] \geq \frac 12 \mathbb{E}[X] = \frac{p \alpha n^r}{2 r!}$. Therefore, there must exist a graph for which we can delete an edge from every copy of $H$ and still leave at least $\frac{p \alpha n^r}{2 r!} \geq \gamma n^{r-\frac{{\rm v}(H)-r}{{\rm e}(H)-1}+c}$ edges, where $\gamma>0$ is an absolute constant and $c = c'/({\rm e}(H)-1)$. This yields the required conclusion when $n$ is a power of ${\rm v}(G)$, but, at the possible expense of replacing $\gamma$ with a smaller number, we can easily interpolate between these values.
\end{proof}

\section{Non-Sidorenko hypergraphs} \label{sec:nonsid}

\subsection{Linear hypergraphs}

Recall that an $r$-graph is \emph{linear} if every pair of edges shares at most one vertex. The \emph{girth} of a linear $r$-graph is the length of the shortest (loose) cycle in the graph. We shall prove the following statement that slightly generalises~\Cref{thm:linear}. 
In the proof, we will also need to know that, for $s\leq r$, the \emph{$(s-1)$-skeleton} of an $r$-graph $H$ is the $s$-graph obtained by replacing each $r$-edge of $H$ by a copy of $K_r^{(s)}$, the complete $s$-graph on $r$ vertices, and simplifying multiedges.


For convenience, we will phrase our proof in the language of weighted $r$-graphs $f$ rather than $r$-graphons $W$. For us, this simply means that we are working with symmetric functions of the form $f:\Omega^r\rightarrow\mathbb{R}_{\geq 0}$, where $\Omega$ is a finite set. The functional $t_H(f)$ is then given by replacing the integral over $[0,1]^{{\rm v}(H)}$ by the sum over $\Omega^{{\rm v}(H)}$. We also allow $f$ to take values larger than $1$, but, since the inequality~\eqref{eq:Sidorenko} is homogeneous, it is sufficient for $f$ to be bounded and non-negative.


\medskip

\begin{theorem}\label{thm:linear_girth}
If $H$ is a linear $r$-graph of odd girth, then $H$ is not Sidorenko.
\end{theorem}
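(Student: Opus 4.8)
The plan is to run a perturbation argument around the constant weighted $r$-graph, arranged so that the odd cycle realising the girth becomes the lowest-order obstruction to Sidorenko's inequality; for $\ell=3$ this is exactly the classical loose-triangle computation. Write $\ell$ for the girth of $H$, which is odd by hypothesis, and fix a finite set $\Omega$ (with the uniform measure) carrying a symmetric $U\colon\Omega^2\to\mathbb{R}$ all of whose row sums vanish, i.e.\ $\sum_{y\in\Omega}U(x,y)=0$ for every $x$. Since $\ell$ is odd we may also require $t_{C_\ell^{(2)}}(U)<0$; for instance $\Omega=\{0,1\}$ with $U(x,y)=\tfrac{1}{2}$ for $x\neq y$ and $U(x,x)=-\tfrac{1}{2}$ has vanishing row sums and $t_{C_\ell^{(2)}}(U)=-2^{-\ell}$. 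From $U$ I form the symmetric weighted $r$-graph $u(x_1,\dots,x_r):=\sum_{1\le i<j\le r}U(x_i,x_j)$ --- morally, $U$ placed on the $1$-skeleton of each edge --- and set $f:=1+\varepsilon u$, which is bounded and non-negative for $\varepsilon>0$ small, hence admissible by the homogeneity of~\eqref{eq:Sidorenko} noted above. As the row sums of $U$ vanish, so does $\int u$, whence $t_{K_r}(f)=1$ and $t_{K_r}(f)^{\mathrm{e}(H)}=1$; it therefore suffices to show that $t_H(f)<1$ for all sufficiently small $\varepsilon>0$.

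The heart of the argument is to expand $t_H(f)=\sum_{S\subseteq E(H)}\varepsilon^{|S|}I_S$, where $I_S:=\int\prod_{e\in S}u(x_e)\,d\mu^{\mathrm{v}(H)}$ (here $u(x_e)$ denotes $u$ applied to the coordinates indexed by $e$), and to identify which $I_S$ survive. Expanding each factor $u(x_e)=\sum_{\{a,b\}\subseteq e}U(x_a,x_b)$ and using that $H$ is linear --- so distinct edges of $S$ must select distinct pairs --- we obtain $I_S=\sum_G t_G(U)$, the sum over the simple graphs $G$ on $V(H)$ obtained by choosing one pair inside each edge of $S$, so that $G$ has exactly $|S|$ edges (isolated vertices of $G$ contribute only a trivial factor). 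Since $U$ has vanishing row sums, integrating out a degree-$1$ vertex annihilates $t_G(U)$; hence $t_G(U)\neq 0$ forces every non-isolated vertex of $G$ to have degree $\ge 2$, so $G$ contains a cycle. A cycle $v_1v_2\cdots v_tv_1$ of $G$ lies inside distinct edges $e^{(1)},\dots,e^{(t)}$ of $H$ with $v_{i+1}\in e^{(i)}\cap e^{(i+1)}$, so it is a Berge cycle of $H$ and --- by linearity --- contains a loose cycle of length at most $t\le|S|$. By the definition of girth, $|S|\ge\ell$; thus $I_S=0$ whenever $|S|<\ell$.

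It remains to evaluate the coefficient of $\varepsilon^\ell$, namely $\sum_{|S|=\ell}I_S$. For $|S|=\ell$, a surviving $G$ has $\ell$ edges and minimum degree $\ge 2$ off its isolated vertices; each non-trivial component of $G$ contains a cycle whose length, by the lifting argument and the girth bound, is $\ge\ell$, so the component has $\ge\ell$ vertices, while a graph of minimum degree $\ge 2$ has at most as many vertices as edges, hence $\le\ell$. Thus $G$ has a single non-trivial component, $2$-regular and connected on $\ell$ vertices, i.e.\ an $\ell$-cycle, contributing $t_{C_\ell^{(2)}}(U)$. So $\sum_{|S|=\ell}I_S=M\cdot t_{C_\ell^{(2)}}(U)$, where $M\ge 0$ counts the configurations (a choice of $\ell$-edge set $S$ and of pairs within it) yielding an $\ell$-cycle; and $M\ge 1$ because $H$, having girth exactly $\ell$, contains a loose $\ell$-cycle, whose link vertices together with its edges realise one such configuration. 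Therefore $t_H(f)=1+M\,t_{C_\ell^{(2)}}(U)\,\varepsilon^\ell+O(\varepsilon^{\ell+1})$ with $M\,t_{C_\ell^{(2)}}(U)<0$, so for $\varepsilon>0$ small enough --- the error being a fixed finite sum of bounded quantities --- we get $t_H(f)<1=t_{K_r}(f)^{\mathrm{e}(H)}$, and $H$ is not Sidorenko. The special case $\ell=3$ recovers \cref{thm:linear}.

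The step I expect to be the main obstacle is the combinatorial bookkeeping of the middle two paragraphs: verifying carefully that linearity makes every $G$ in the expansion simple with exactly $|S|$ edges, that a cycle of $G$ genuinely lifts to a loose cycle of $H$ of no greater length --- which is precisely what lets the girth hypothesis bite --- and that at order $\varepsilon^\ell$ nothing survives except honest $\ell$-cycles. Oddness of $\ell$ is used in exactly one place, to guarantee a symmetric kernel $U$ with $t_{C_\ell^{(2)}}(U)<0$, and it is genuinely needed: for even $\ell$ one always has $t_{C_\ell^{(2)}}(U)=\sum_i\lambda_i^\ell\ge 0$, where the $\lambda_i$ are the eigenvalues of the operator associated with $U$.
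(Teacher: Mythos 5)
Your proof is correct and is essentially the paper's argument in a mildly generalised form. The paper takes the specific kernel $U(x,y)=-xy$ on $\{\pm 1\}$, i.e.\ $f=1-c\sum_{i<j}x_ix_j$; on $\{\pm 1\}$ any monomial containing a variable of odd degree has zero mean, so the surviving ``Berge'' subgraphs of the $1$-skeleton have \emph{all} vertices of even degree (hence are edge-disjoint unions of cycles), which shortcuts the component-counting you carry out to rule out anything other than a single $\ell$-cycle at order $\varepsilon^{\ell}$. Your weaker hypothesis --- zero row sums, which kills only degree-one vertices --- requires that extra vertex-versus-edge bookkeeping, but it makes the mechanism a bit more transparent: the only thing used is that a symmetric zero-row-sum kernel $U$ with $t_{C_\ell^{(2)}}(U)<0$ exists exactly when $\ell$ is odd, and both routes land on the same leading coefficient, a negative multiple of the number of shortest loose cycles.
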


\begin{proof}[\bf{Proof}]
Consider the weighted $r$-graph on $\{-1, 1\}$ where the edge $(x_1, x_2, \dots, x_r) \in \{\pm 1\}^r$ receives the weight $f(x_1,\dots,x_r) = 1-c\sum_{i<j}x_ix_j$. For $c \leq 1/\binom{r}{2}$, $f$ is a non-negative symmetric function with $t_{K_r}(f) = 1$.
Observe that a monomial in the expansion of $\prod_{v_1\dots v_r \in E(H)}f(x_{v_1},\dots,x_{v_r})$, the integrand of $t_H(f)$, has zero average whenever it contains a variable of odd degree. Thus, the non-vanishing terms in the expansion of $t_H(f)$ correspond to `Berge' even subgraphs~$F$ of the $1$-skeleton of~$H$, those subgraphs~$F$ where every vertex has even degree and every (2-)edge $e\in E(F)$ extends to a unique $r$-edge in~$H$. Moreover, every such $F$ receives the weight $(-c)^{{\rm e}(F)}$. Therefore, if $g$ is the girth of $H$ and $g$ is odd, $t_H(f) = 1 -Kc^g + O(c^{g+1})$, where $K$ denotes the number of shortest loose cycles. By choosing $c>0$ small enough, we see $t_H(f)<1 = t_{K_r}(f)^{{\rm e}(H)}$, so $H$ is not Sidorenko.
\end{proof}

It is also possible to generalise~\cref{thm:linear_girth}, although we did not find any concrete applications of this more general result. Indeed, by replacing $f$ with $f(x_1,\dots,x_r) = 1-c\sum_{i_1< \dots < i_s} x_{i_1} \cdots x_{i_s}$ for any $s \le r$, one can show that if the smallest subgraph $F$ of the $(s-1)$-skeleton of $H$ where every vertex has even degree and every $s$-edge $e \in E(F)$ extends to a unique $r$-edge in $H$ has an odd number of edges, then $H$ is not Sidorenko. Since, in an $s$-uniform hypergraph $F$, $\sum_{v \in V(F)} d(v) = s \cdot {\rm e}(F)$, such a subgraph $F$ can only exist when $s$ is even. 

\subsection{Tight cycles}

Recall that $C_{\ell}^{(r)}$ denotes an $r$-uniform tight cycle of length $\ell$. Since $C_\ell^{(r)}$ and $C_\ell^{(r)} - e$ can only be $r$-partite when $\ell$ is a multiple of $r$, in order to prove Theorem~\ref{thm:tight_cycles}, it will suffice to study tight cycles of the form $C_{kr}^{(r)}$.

Given an $r$-graph $H$, let $\kappa_m(H)$ denote the number of subgraphs of $H$ with $m$ edges and no degree-one vertices. 
As captured by the following proposition, the polynomial $P_H(x) := \sum_{i=1}^{{\rm e}(H)} \kappa_i(H) x^i$ will play an important role in the proof of~\Cref{thm:tight_cycles}.

\begin{proposition}\label{th:root}
Let $r$ be odd and $H$ be a subgraph of $C_{kr}^{(r)}$. 
If $H$ is Sidorenko, then $P_H(x) \geq 0$ for all $x \in [-1,0]$. 
\end{proposition}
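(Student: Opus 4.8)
The plan is to mimic the construction in the proof of \cref{thm:linear_girth}, using the same weighted $r$-graph $f(x_1,\dots,x_r) = 1 - c\sum_{i<j} x_i x_j$ on $\Omega = \{-1,1\}$, but this time tracking the coefficient of every power of $c$ rather than just the leading term. As before, $f$ is non-negative and symmetric for $c$ small enough and $t_{K_r}(f) = 1$, so that proving $H$ is not Sidorenko amounts to finding a small $c$ with $t_H(f) < 1$. The first step is therefore to expand $t_H(f) = \mathbb{E}\big[\prod_{e \in E(H)} f(x_e)\big]$ over $\Omega^{{\rm v}(H)}$ and identify which monomials survive the averaging.

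The second and main step is the combinatorial bookkeeping. Expanding the product, each term selects, for every edge $e \in E(H)$, either the constant $1$ or one of the $\binom{r}{2}$ quadratic terms $-c\,x_a x_b$ with $\{a,b\} \subseteq e$. A monomial in the $x_u$ variables has nonzero average over $\{\pm1\}^{{\rm v}(H)}$ exactly when every variable appears to an even power, equivalently to an even power \emph{or} not at all — i.e. when the selected pairs form a multigraph on $V(H)$ in which every vertex has even degree. The crucial structural point, which I expect to be the main obstacle, is to show that for $H \subseteq C_{kr}^{(r)}$ with $r$ odd, the set of surviving contributions is governed precisely by subgraphs of $H$ with no degree-one vertex, and that a subgraph $F$ of $H$ with $j$ edges contributes $(-c)^j$ (up to a nonnegative combinatorial multiplicity that should turn out to be $1$ here, so that the total coefficient of $x^j = (-c)^j$ is exactly $\kappa_j(H)$). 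This is where oddness of $r$ and the tight-cycle structure are used: I would argue that in a tight cycle the only way a collection of chosen pairs (one per chosen edge, lying inside that edge) can have all even degrees is to have each chosen edge contribute a pair joining its two ``endpoints'' in a consistent way, forcing the chosen edges to themselves form a sub-$r$-graph of $C_{kr}^{(r)}$ with minimum degree at least $2$, and conversely each such sub-$r$-graph admits a unique (or an odd/appropriately-signed) valid pair-assignment. The parity of $r$ ensures the signs do not cancel, analogously to the girth argument.

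Granting the claim that $t_H(f) = \sum_{j} \kappa_j(H)(-c)^j = P_H(-c)$, the final step is immediate: if $H$ is Sidorenko then $t_H(f) \geq t_{K_r}(f)^{{\rm e}(H)} = 1 = \kappa_0(H)$ — wait, more precisely $t_H(f) = 1 + P_H(-c)$ once we separate the empty-subgraph term, so Sidorenko forces $P_H(-c) \geq 0$ for all sufficiently small $c > 0$. Since $P_H$ is a polynomial, nonnegativity on a right-neighbourhood of $0$ together with a rescaling argument (replacing $c$ by $c/\lambda$ and using that $f$ stays valid for $c \le 1/\binom{r}{2}$, or simply invoking homogeneity and that $P_H$ is a fixed polynomial) upgrades to $P_H(x) \geq 0$ for all $x \in [-1/\binom r2, 0]$; a further rescaling of the variable $x$ (absorbing constants, since the statement is about the sign of $P_H$ on an interval and $P_H(0)=0$ with all information in the sign pattern) gives $P_H(x) \ge 0$ on all of $[-1,0]$, as claimed. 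The one technical point to handle carefully is matching the normalisation so that the admissible range of $c$ translates to the full interval $[-1,0]$ in the statement; I expect this to come down to either allowing $f$ to take values slightly above the naive bound (permissible since \eqref{eq:Sidorenko} is homogeneous, as noted in the excerpt) or a clean substitution $x \mapsto x/\binom{r}{2}$.
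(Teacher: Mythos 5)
Your proposal diverges from the paper's proof in a way that introduces a genuine gap: the claimed identity $t_H(f) = 1 + P_H(-c)$ for the weighted $r$-graph $f(x_1,\dots,x_r) = 1 - c\sum_{i<j} x_i x_j$ on $\{\pm 1\}$ is simply false. In that expansion a surviving term corresponds to a choice of a subset $S \subseteq E(H)$ together with one pair $\{a_e,b_e\}\subseteq e$ for each $e\in S$, such that the resulting $2$-edge multiset has all even degrees; the exponent of $c$ is $|S|$, the number of chosen \emph{pairs}, and there is no reason for the count of such configurations to equal $\kappa_j(H)$, which counts $j$-edge sub-$r$-graphs of $H$ with no degree-one vertex. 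To see concretely that they differ, take $H = C_6^{(3)}$: the pairs $\{1,3\},\{3,5\},\{5,1\}$ chosen inside $\{1,2,3\},\{3,4,5\},\{5,6,1\}$ form an even $2$-graph, so the coefficient of $c^3$ in $t_H(f)$ is nonzero, yet $\kappa_3(C_6^{(3)}) = 0$ (\cref{th:Pk3c} gives $\kappa_i = 0$ for $i < 4$). So the expansion you are computing is not $1 + P_H(-c)$, and the approach does not connect to the polynomial $P_H$ at all. Your hedge that the multiplicity ``should turn out to be $1$'' is exactly the step that fails.

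The paper's proof uses a qualitatively different perturbation, and it is worth seeing why that choice matters. Rather than perturbing by a sum of pairwise products, it sets $g_\varepsilon(x_1,\dots,x_r) = \prod_{i=1}^r f_\varepsilon(x_i)$ for a one-variable function $f_\varepsilon$ with $\int f_\varepsilon = 0$ and $\int f_\varepsilon^d = (-1)^d\varepsilon + O(\varepsilon^2)$ for $d \ge 2$. Because the perturbation is a \emph{product over vertices}, $t_G(g_\varepsilon)$ factors as $\prod_v \int f_\varepsilon^{d_v}$ and therefore vanishes whenever any vertex of $G$ has degree one; this is precisely what makes the degree-one-free subgraphs (and hence $\kappa_j$) the correct combinatorial objects, with no extra multiplicities. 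A second crucial structural fact you did not address is also used: in a subgraph of a tight cycle, degrees of consecutive vertices differ by at most one, so a nonempty subgraph with no degree-one vertex automatically has minimum degree $\ge 2$ on all $kr$ vertices. This forces every surviving term in $t_H(1 + cg_\varepsilon)$ to sit at exactly order $\varepsilon^{kr}$, so that the $\varepsilon^{kr}$-coefficient is exactly $P_H(-c)$, and one then takes $\varepsilon \to 0$. The oddness of $r$ enters only to guarantee $(-1)^{\sum d_v} = (-1)^{e(G)}$.

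Finally, even granting your expansion, your handling of the interval $[-1,0]$ does not go through. Nonnegativity of $f = 1 - c\sum_{i<j}x_ix_j$ forces $c \le 1/\binom r2$, and neither homogeneity of \eqref{eq:Sidorenko} nor a rescaling of $x$ changes this: homogeneity lets you scale the whole function by a positive constant, which does not create nonnegativity where it failed, and rescaling $x$ in $P_H$ changes the polynomial being evaluated, not the interval of validity. In the paper's construction the relevant function is $h_{\varepsilon,c} = 1 + cg_\varepsilon$ with $|g_\varepsilon|\le 1$, so $c$ ranges freely over $[0,1]$ and the conclusion is obtained on all of $[-1,0]$ directly.
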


\begin{proof}[\bf{Proof}]
Suppose that $P_H$ takes a negative value on $[-1,0]$. 
Then there exists $c \in (0,1)$ 
such that $P_H(-c) < 0$. 
For $\varepsilon\in(0,1)$, let $f_\varepsilon$ be the function on $[0,1]$ defined by
\begin{align*}
    f_\varepsilon(x) = \begin{cases}
        \varepsilon ~~~~\text{ if } x\leq \frac{1}{1+\varepsilon}\\
        -1 ~~\text{ otherwise.}
    \end{cases}
\end{align*}
Then $\int_0^1 f_\varepsilon d\mu=0$ and, for any fixed integer $d>1$ and $\varepsilon$ sufficiently small, 
$\int_0^1 (f_\varepsilon)^d d\mu = (-1)^d \varepsilon + O(\varepsilon^2)$. 

Let
$g_\varepsilon(x_1,\dots,x_r) := \prod_{i=1}^r f_\varepsilon(x_i)$, so that
$t_G(g_\varepsilon)=0$ whenever $G$ has a vertex of degree one.
Moreover, for every $n$-vertex $r$-graph $G$ with degree sequence $d_1,\dots,d_n\geq 2$,
\begin{align*}
    t_G(g_\varepsilon)  = (-1)^{\sum_{i=1}^nd_i}\varepsilon^{{\rm v}(G)} +O(\varepsilon^{{\rm v}(G)+1}) =  (-1)^{{\rm e}(G)}\varepsilon^{{\rm v}(G)} +O(\varepsilon^{{\rm v}(G)+1}),
\end{align*}
since $r\cdot{\rm e}(G)=\sum_{i=1}^n d_i$ and ${\rm e}(G)$ have the same parity by the assumption that $r$ is odd.

Let $h_{\varepsilon,c} := 1 + cg_\varepsilon$, noting that this function is non-negative. 
By expanding out the expression $\prod_{v_1\dots v_r \in E(H)} h_{\varepsilon,c}(x_{v_1},\dots,x_{v_r}) = \prod_{v_1\dots v_r \in E(H)}(1+cg_{\varepsilon}(x_{v_1},\dots,x_{v_r}))$, we see that
\begin{align}\label{eq:h_expand}
    t_H(h_{\varepsilon,c}) = 1 + \sum_{G \subseteq H} c^{{\rm e}(G)} t_G(g_\varepsilon),
\end{align}
where the sum is taken over all non-empty edge subsets of $H$, which can be seen as subgraphs $G$ of $H$. 

In any subgraph of the tight cycle $C_{kr}^{(r)}$, 
degrees of consecutive vertices of the cycle differ by at most one.
Thus, in a non-empty subgraph $G$ with no vertices of degree one, 
no isolated vertices exist and, hence, all but those $G$ with minimum degree at least two vanish in~\eqref{eq:h_expand}. 
Therefore, $\kappa_m(H)$ counts the number of $m$-edge subgraphs of $H$ on $kr$ vertices with minimum degree at least two. It then follows that
\begin{align*}
    t_H(h_{\varepsilon,c}) &= 1 + \sum_{G \subseteq H, \, \delta(G)\geq 2} c^{{\rm e}(G)} t_G(g_\varepsilon)\\
    &=1 + \varepsilon^{kr}\sum_{G \subseteq H, \, \delta(G)\geq 2} (-c)^{{\rm e}(G)}  + O(\varepsilon^{kr+1}) = 1+\varepsilon^{kr}P_H(-c) +O(\varepsilon^{kr+1}).
\end{align*}
Therefore, for sufficiently small $\varepsilon>0$, 
$t_H(h_{\varepsilon,c}) < 1$. But $\int h_{\varepsilon,c} \, d\mu^r = 1$, so this contradicts our assumption that $H$ is Sidorenko. 
\end{proof}

We will now use \cref{th:root} to prove the following three results, which together make up~\cref{thm:tight_cycles}. 

\begin{theorem}\label{th:Pk3}
$C_{3k}^{(3)}$ is not Sidorenko for $k \geq 2$. 
\end{theorem}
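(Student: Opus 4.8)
The plan is to apply \cref{th:root} in its contrapositive form: to show that $C_{3k}^{(3)}$ is not Sidorenko, it suffices to exhibit a point $x \in [-1,0]$ where the polynomial $P_{C_{3k}^{(3)}}(x) = \sum_{i} \kappa_i(C_{3k}^{(3)}) x^i$ is negative. Here $r=3$ is odd, so the hypothesis of the proposition is met. The first step is thus a combinatorial enumeration: I need to understand the subgraphs of the tight cycle $C_{3k}^{(3)}$ that have no degree-one vertices, i.e. minimum degree at least $2$ (equivalently, as noted in the proof of \cref{th:root}, minimum degree $\geq 2$ with no isolated vertices, since consecutive degrees along the cycle differ by at most one). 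In a tight cycle, an edge is an interval of $3$ consecutive vertices $\{i,i+1,i+2\}$, and the degree of a vertex $v$ is the number of chosen edges whose interval contains $v$. A set of intervals has all degrees $\geq 2$ or $0$ precisely when, reading cyclically, the ``gaps'' between consecutive chosen intervals are controlled: roughly, the chosen edges form blocks of consecutive intervals, and within the block every vertex is covered at least twice, which forces overlaps of length $\geq 2$, while between blocks there must be a genuine gap. So the enumeration reduces to counting, for each $m$, the number of ways to place such blocks around a cyclic ground set of size $3k$.

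The second step is to extract enough structure from $P_{C_{3k}^{(3)}}$ to certify negativity somewhere on $[-1,0]$. I expect the cleanest route is to evaluate (or bound) $P_{C_{3k}^{(3)}}(-1) = \sum_i (-1)^i \kappa_i(C_{3k}^{(3)})$, which by the parity bookkeeping above should equal, up to sign, something like the difference between the number of even-edge and odd-edge min-degree-$\geq 2$ subgraphs. If $P_{C_{3k}^{(3)}}(-1) < 0$ we are immediately done with $x = -1$; if it is $\geq 0$ or $0$, I would instead look at small $x$, where $P_{C_{3k}^{(3)}}(-c) \approx -\kappa_{i_0}(C_{3k}^{(3)}) c^{i_0}$ with $i_0$ the least index for which $\kappa_{i_0} \neq 0$ — but since $\kappa_{i_0} > 0$ that term is negative, so for all sufficiently small $c > 0$ we get $P_{C_{3k}^{(3)}}(-c) < 0$ automatically, as long as $\kappa_{i}(C_{3k}^{(3)})$ is nonzero for some $i$. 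And it certainly is: the whole cycle $C_{3k}^{(3)}$ itself is a min-degree-$3$ subgraph, so $\kappa_{3k}(C_{3k}^{(3)}) \geq 1$, and in fact the smallest min-degree-$\geq 2$ subgraph (a short ``doubled path'' wrapping around, or the minimal such configuration) gives a small positive $\kappa_{i_0}$. Hence the leading low-order behaviour of $P_{C_{3k}^{(3)}}$ near $0^-$ is $\kappa_{i_0}(-c)^{i_0} < 0$ when $i_0$ is odd, and negative for small $c$ regardless when we track the sign of the first nonzero coefficient.

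Concretely, I would carry out the steps in this order: (1) identify the minimum $i_0$ with $\kappa_{i_0}(C_{3k}^{(3)}) \neq 0$ and the sign pattern of the low-degree coefficients — I expect the minimal no-degree-one subgraph to consist of a single ``run'' of overlapping intervals forming a short cyclic-interval doubling, with $i_0$ around $k+1$ or so, and I want its parity; (2) if $i_0$ is odd, conclude $P_{C_{3k}^{(3)}}(-c)<0$ for small $c>0$ and invoke \cref{th:root}; (3) if $i_0$ is even, pass to analysing $P_{C_{3k}^{(3)}}(-1)$ via the block-decomposition count, showing the signed sum is negative; and (4) deal with the small cases $k=2,3$ (i.e. $C_6^{(3)}$, $C_9^{(3)}$) by direct computation of $P_H$ if the general argument has edge cases there. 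The main obstacle I anticipate is step (1)–(3): pinning down the exact combinatorics of no-degree-one subgraphs of a tight cycle precisely enough to control either the parity of $i_0$ or the sign of the alternating sum $P_{C_{3k}^{(3)}}(-1)$. The block structure is intuitively clear, but turning it into a clean closed form (or a clean sign determination) for all $k$ — especially handling how ``doubled'' runs wrap around the cycle and interact with the gaps, and how the $k$-dependence of $3k$ interacts with the interval length $3$ — is where the real work lies. If a closed form for $P_{C_{3k}^{(3)}}$ is awkward, the fallback is to show $P_{C_{3k}^{(3)}}$ has a root in $(-1,0)$, or simply that its first nonzero Taylor coefficient at $0$ has a sign making $P_{C_{3k}^{(3)}}$ negative just to the left of $0$, which only requires identifying $i_0$ and the sign of $\kappa_{i_0}$, not the whole polynomial.
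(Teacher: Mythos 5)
Your overall strategy — invoke \cref{th:root} and find $x\in[-1,0]$ with $P_{C_{3k}^{(3)}}(x)<0$ — is correct and is what the paper does, but both of your concrete routes for certifying the negativity fail, and the real content of the proof is precisely the part you left as ``the main obstacle.''

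Your primary route is to look at the first nonzero coefficient $\kappa_{i_0}$ and win when $i_0$ is odd. This cannot work: the smallest no-degree-one subgraph of $C_{3k}^{(3)}$ is obtained by removing $k$ pairwise-disjoint edges from the cycle, which has $2k$ edges, so $i_0=2k$ is always \emph{even} and $\kappa_{2k}=3$. Consequently $P_{C_{3k}^{(3)}}(-c)=3c^{2k}+O(c^{2k+1})>0$ for small $c>0$; the polynomial is positive just to the left of $0$, and no ``small $c$'' argument exists. (Your phrase ``negative for small $c$ regardless'' is wrong because $\kappa_{i_0}>0$ always, being a count.) Your fallback, evaluating at $x=-1$, also fails: using the formula $\kappa_{2k+i}=\frac{3k}{k+2i}\binom{k+2i}{3i}$ one computes, for $k=5$, $P_{C_{15}^{(3)}}(-1)=3-75+140-75+15-1=7>0$. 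So $x=-1$ does not give a uniform proof. Thus you must use a carefully tuned intermediate value of $x$, and you need quantitative control of the alternating sum — exactly the part your plan defers.

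The paper's proof is genuinely nontrivial here. It first establishes the explicit formula $\kappa_{2k+i}(C_{3k}^{(3)})=\frac{3k}{k+2i}\binom{k+2i}{3i}$ (\cref{th:Pk3c}), then sets $x=\frac{30}{k^3+k^2}$ and shows, via the ratio inequality in \cref{th:lem}, that the terms $A_i := \kappa_{2k+i} x^i$ decay geometrically: $A_{i+1}/A_i \le \tfrac12$. This makes the alternating sum $3+\sum_{i\ge 1}(-1)^i A_i$ dominated by its first correction term, giving $x^{-2k}P_{C_{3k}^{(3)}}(-x) \le 3 - \tfrac12 A_1 = 3 - \tfrac{15}{2} < 0$. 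The choice of the scale $x\sim k^{-3}$ is dictated by balancing $\binom{k+2}{3}\sim k^3/6$ against the decay of the tail; neither ``$c$ tiny'' nor ``$c=1$'' lands in the right regime. You should carry out this step explicitly: compute $\kappa_i$ (your block-decomposition intuition is the right picture — removing $j$ disjoint edges from the cycle gives exactly the min-degree-$\ge 2$ subgraphs, counted by the cyclic formula $\frac{3k}{j}\binom{3k-1-2j}{j-1}$), then pick a scale for $x$ and prove a geometric-decay estimate for the resulting $A_i$.
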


\begin{theorem}\label{th:Pk3-e}
$C_{3k}^{(3)}-e$ is not Sidorenko for $k \geq 3$. 
\end{theorem}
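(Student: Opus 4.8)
By \cref{th:root}, it suffices to show that $P_H(-c) < 0$ for some $c \in (0,1)$, where $H = C_{3k}^{(3)} - e$ and $P_H(x) = \sum_i \kappa_i(H) x^i$ counts subgraphs of $H$ with $i$ edges and minimum degree at least two. So the plan is to understand exactly which edge subsets of $C_{3k}^{(3)} - e$ have no vertex of degree one. In the full tight cycle $C_{3k}^{(3)}$, a subgraph with $\delta \geq 2$ is built from ``arcs'' (intervals of consecutive edges) of length at least $3$, since in a $3$-uniform tight cycle the two endpoints of a run of consecutive edges both have degree $1$ or $2$ depending on whether the run has length $<3$ or $\geq 3$; requiring $\delta \geq 2$ forces every maximal run to have length at least $3$, and the only way to have a vertex of degree $2$ that is ``shared'' by two runs is essentially if the runs abut, which means either the whole cycle is taken or it decomposes into disjoint long arcs. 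I would first work out, as a clean combinatorial lemma, a formula (or generating function) for $\kappa_i(C_{3k}^{(3)})$ in terms of compositions of the ``arc lengths'', and then adjust for the single deleted edge $e$: deleting $e$ from $C_{3k}^{(3)}$ removes exactly those $\delta \geq 2$ subgraphs that use $e$, so $\kappa_i(C_{3k}^{(3)} - e) = \kappa_i(C_{3k}^{(3)}) - (\text{number of } \delta \geq 2 \text{ subgraphs of } C_{3k}^{(3)} \text{ containing } e \text{ with } i \text{ edges})$.

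Once I have these formulas, the remaining task is purely analytic: exhibit a value $c \in (0,1)$ with $P_H(-c) < 0$. I would guess that, just as in the proof of \cref{th:Pk3}, a single well-chosen value of $c$ works uniformly for all $k \geq 3$ — likely $c$ close to $1$, so that the dominant contributions come from the largest subgraphs (the whole path $H$ itself, contributing $(-c)^{3k-1}$, and subgraphs obtained by deleting a few more edges). The sign of $P_H(-c)$ near $c = 1$ should be governed by the parity of ${\rm e}(H) = 3k - 1$ and by how many top-order subgraphs there are; since $3k-1 \not\equiv 0 \pmod 3$ the relevant alternating sum need not telescope to something nonnegative, and I expect $P_H(-1)$ (or the limit as $c \to 1^-$) to already be negative, or at worst to require a slightly smaller $c$. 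To make this rigorous for all $k$ simultaneously I would derive a closed form for $P_H(-c)$ (a rational function of $c$, via the transfer-matrix / generating-function computation), and then bound it below zero on a suitable subinterval of $(0,1)$.

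The main obstacle I anticipate is the bookkeeping for the ``no degree-one vertex'' subgraphs of the \emph{path-like} graph $C_{3k}^{(3)} - e$ near the two ends created by deleting $e$: unlike in the cyclically symmetric case, the endpoints behave differently, and one must be careful about which short arcs near the ends are permissible (an arc touching an end of the path has fewer low-degree vertices to worry about, but also the end vertices themselves have intrinsically small degree). Getting the generating function exactly right — and in particular pinning down the coefficient of the top few powers of $x$ correctly, since those drive the sign near $c=1$ — is where the real care is needed. A secondary, milder obstacle is handling the small cases ($k = 3$, and perhaps $k = 4$) separately if the general generating-function argument only kicks in for $k$ large enough; these can be checked by direct enumeration. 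The case $k \geq 3$ (rather than $k \geq 2$) in the statement presumably reflects exactly such a degeneracy for $k = 2$, where $C_6^{(3)} - e$ is too small for the argument, consistent with the threshold $k \geq 7$ appearing in \cref{thm:tight_cycles}.
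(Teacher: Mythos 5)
Your high-level plan matches the paper: reduce to showing $P_H(-c)<0$ for some $c\in(0,1)$ via \cref{th:root}, compute the coefficients $\kappa_i(C_{3k}^{(3)}-e)$ by understanding $\delta\geq 2$ subgraphs of the tight cycle, and then pick a value of $c$. The paper in fact derives $\kappa_i(C_{3k}^{(3)}-e)=\frac{3k-i}{3k}\kappa_i(C_{3k}^{(3)})$ from the cyclic symmetry of $C_{3k}^{(3)}$ (a subgraph with $i$ edges avoids a uniformly random edge with probability $\frac{3k-i}{3k}$), which is tidier than, but equivalent to, your proposed subtraction of subgraphs containing $e$. Your characterisation of the $\delta\geq 2$ subgraphs as unions of arcs is essentially right, but the threshold is arcs of length at least $2$, not $3$; equivalently, these are the complements of sets of pairwise disjoint (i.e., cyclically at least $3$ apart) edges, of which there are $\frac{3k}{j}\binom{3k-1-2j}{j-1}$ for $j$ removed edges.

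The serious gap is in the analytic step: you propose to take $c$ close to $1$ and argue the sign is driven by the top coefficient $(-c)^{3k-1}$. This cannot work uniformly. First, the sign of $(-c)^{3k-1}$ alternates with the parity of $k$, so no single sign argument at the top degree works for all $k$. More decisively, $P_{C_{3k}^{(3)}-e}(-1)$ is not negative for all $k$: using the coefficients $\kappa_{2k+i}=\binom{k+2i-1}{3i}$, one computes $P_H(-1)=0$ for $k=5$, $P_H(-1)=7$ for $k=6$, $P_H(-1)=11$ for $k=7$, and $P_H(-1)=1$ for $k=8$. The reasoning involving $3k-1\not\equiv 0\pmod 3$ is not relevant to the sign of an alternating sum. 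The paper instead takes $c$ small, namely $c=\frac{300}{7(k^3-k)}$, so that only the lowest nonvanishing coefficients $\kappa_{2k}$ and $\kappa_{2k+1}$ matter: writing $B_i=\binom{k+2i-1}{3i}c^i$, a ratio bound (\cref{th:lem2}) gives $B_{i+1}/B_i\le\frac12$, hence $c^{-2k}P_H(-c)\le 1-\frac12 B_1=1-\frac{25}{7}<0$. The small case $k=3$ is checked directly via $P_{C_9^{(3)}-e}(x)=x^6(1+4x+x^2)$. So the correct regime is $c\asymp k^{-3}$, the opposite end of the interval from what you propose, and the argument is governed by the bottom, not the top, of the polynomial.
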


\begin{theorem}\label{th:P2r}
$C_{2r}^{(r)}$ is not Sidorenko for any odd $r \geq 3$.
\end{theorem}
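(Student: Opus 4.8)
The plan is to apply \cref{th:root} with $H = C_{2r}^{(r)}$, so it suffices to exhibit a value $x_0 \in [-1,0]$ with $P_{H}(x_0) < 0$, where $P_H(x) = \sum_{i} \kappa_i(H) x^i$ and $\kappa_i(H)$ counts the $i$-edge subgraphs of $C_{2r}^{(r)}$ with minimum degree at least $2$. First I would describe the subgraphs of $C_{2r}^{(r)}$ with no degree-one vertex explicitly. Index the edges of $C_{2r}^{(r)}$ by $\ZZ_{2r}$, where edge $e_j = \{j, j+1, \dots, j+r-1\}$. Since consecutive edges overlap heavily, the degree of a vertex $v$ in a subgraph $G$ equals the number of chosen edges among the $r$ consecutive edges that contain $v$. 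The condition $\delta(G) \ge 2$ then translates into a combinatorial condition on the chosen edge-index set $S \subseteq \ZZ_{2r}$: every "window" of $r$ consecutive indices around the cycle must meet $S$ in at least two elements (and in fact one must be careful about which vertices force which windows). The upshot is that $\kappa_i(C_{2r}^{(r)})$ equals the number of such admissible $S$ with $|S| = i$; in particular $\kappa_i = 0$ for $i$ too small (roughly $i < 4$, since a valid $S$ cannot be too sparse) and $\kappa_{2r}(H) = 1$ (the whole cycle).

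The key computational step is to pin down $P_H$ precisely enough to detect a negative value on $[-1,0]$. The cleanest route is to evaluate at $x_0 = -1$: then $P_H(-1) = \sum_i (-1)^i \kappa_i(H)$, which is (up to sign conventions) an Euler-characteristic-type alternating sum over the admissible edge sets $S$. I would compute this alternating sum by an inclusion–exclusion or transfer-matrix argument on the cyclic structure — the admissibility condition "every length-$r$ window contains $\ge 2$ chosen indices" is exactly the kind of local constraint a transfer matrix on $\ZZ_{2r}$ handles. If $P_H(-1) < 0$ we are done immediately; if $P_H(-1) = 0$ (which can happen for parity reasons), I would instead take $x_0 = -c$ for small $c>0$ and identify the lowest-degree term: $P_H(-c) = \kappa_{m}(H)(-c)^{m} + O(c^{m+1})$ where $m$ is the minimum number of edges in an admissible subgraph. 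Here the parity of $m$ is decisive — if the smallest min-degree-$\ge 2$ subgraph of $C_{2r}^{(r)}$ has an odd number of edges, then $P_H(-c) < 0$ for small $c$ and \cref{th:root} finishes the proof. So the heart of the matter is to determine $m$ and its parity for $C_{2r}^{(r)}$ with $r$ odd; I would expect the extremal configuration to be something like two short "arcs" of consecutive edges placed antipodally, giving an explicit small $m$ whose parity can be read off and which is odd precisely when $r$ is odd.

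The main obstacle I anticipate is the bookkeeping in translating "$\delta(G)\ge 2$ in the hypergraph" into the right constraint on the edge-index set $S$ and then extracting either the sign of the alternating sum $P_H(-1)$ or the parity of the minimal $m$. The window condition is subtle because a vertex $v = j$ lies in edges $e_{j-r+1}, \dots, e_j$, so the $2r$ vertex-constraints are $2r$ overlapping windows, and one must check there are no isolated vertices either (a subgraph could in principle have a vertex of degree $0$, which is allowed by $\delta \ge 2$ only in the sense of "no degree-one vertex" — but as noted in the proof of \cref{th:root}, for tight cycles consecutive degrees differ by at most one, so degree-$0$ and degree-$\ge 2$ vertices cannot be adjacent, forcing any nonempty admissible $G$ to have full vertex set and genuine minimum degree $\ge 2$). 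Once this reduction is clean, both the transfer-matrix evaluation at $-1$ and the identification of the minimal odd-edge subgraph should be routine, with the oddness of $r$ entering exactly through the parity argument that already appears in the proof of \cref{th:root} (namely $r\cdot \mathrm{e}(G) = \sum_i d_i$).
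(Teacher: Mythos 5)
Your overall framework is exactly the paper's: apply \cref{th:root} with $H = C_{2r}^{(r)}$, interpret $\kappa_i$ as a count of edge-index subsets $S\subseteq\ZZ_{2r}$ subject to a local window constraint (the paper formalises this as counting ``bad'' elements in \cref{th:bad}), and then exhibit $x_0\in[-1,0]$ with $P_H(x_0)<0$. However, both concrete mechanisms you propose for producing that negative value fail.

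First, the ``lowest-degree term'' route cannot work. From the paper's \cref{th:P2rc}, $\kappa_i(C_{2r}^{(r)})=0$ for $i\le 3$ and $\kappa_4(C_{2r}^{(r)})=\binom{2r}{4}-4r\binom{r}{3}=\binom{r}{2}>0$, so the minimal admissible subgraph has $m=4$ edges, which is \emph{even} (regardless of the parity of $r$). Your anticipated extremal configuration with odd $m$ does not exist; near $0$ one has $P_H(-c)=\binom{r}{2}c^4+O(c^5)>0$. Second, the evaluation at $x_0=-1$ also fails for almost all odd $r$. Using the paper's closed form
\begin{align*}
P_{C_{2r}^{(r)}}(x)=(1+x)^{2r}+2rx(1+x)^r-2r^2x^2(1+x)^{r-1}-\Bigl(1+4rx+r(2r-1)x^2+\tfrac{1}{3}r(r-1)(r-2)x^3\Bigr),
\end{align*}
one gets $P_{C_{2r}^{(r)}}(-1)=\tfrac{1}{3}r^3-3r^2+\tfrac{17}{3}r-1$, which is negative only for $r\in\{3,5\}$ and is positive (indeed grows like $r^3/3$) for all odd $r\ge 7$. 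So the alternating sum you want to show is negative is in fact positive for large $r$. Your fallback clause only addresses the case $P_H(-1)=0$, but the actual failure mode is $P_H(-1)>0$, and neither endpoint nor leading-term analysis gives negativity.

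What the paper does instead, and what is genuinely needed, is to pick $x_0$ at an intermediate scale $\Theta(1/r)$ rather than at $-1$ or near $0$. Using the binomial-sum closed form above, the paper sets $x=-1/r$ for $r\ge 16$ and $x=-2/r$ for $3\le r\le 16$, and compares $(1+x)^r$ with $e^{rx}$ to show $P_{C_{2r}^{(r)}}(x)<0$; the key cancellation is between the terms $(1+x)^{2r}+2rx(1+x)^r-2r^2x^2(1+x)^{r-1}$ (which are $\Theta(1)$ at this scale) and the explicit cubic polynomial being subtracted. Your transfer-matrix idea could in principle be made to yield the same closed form for $P_H$, so the counting half of your plan is salvageable, but the evaluation strategy needs to be replaced wholesale: you need a window of $x$-values that is neither $-1$ nor infinitesimal, and to get there you need an explicit expression for $P_H$ (or at least sharp two-sided estimates at scale $1/r$), not just the value at $-1$ or the minimal $\kappa_m$.
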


For the proofs, we will need to better understand the functions $\kappa_m(H)$ for the $r$-graphs $H$ under consideration.

\begin{lemma}\label{th:Pk3c}
$\kappa_i(C_{3k}^{(3)})=0$ for $i<2k$ and 
$\kappa_{2k+i}(C_{3k}^{(3)}) = \frac{3k}{k+2i} \binom{k+2i}{3i}$ 
for $0 \leq i \leq k$. 
\end{lemma}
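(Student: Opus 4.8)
The plan is to count, for each $m$, the subgraphs $G$ of the tight cycle $C_{3k}^{(3)}$ on $3k$ vertices that have minimum degree at least two, grouping them by their structure. The key observation is that in a subgraph $G$ of $C_{3k}^{(3)}$, the set of vertices with degree $\geq 1$ breaks into ``arcs'' of consecutive vertices, and since degrees of consecutive cyclic vertices differ by at most one, a vertex of degree one can only occur at the boundary between a used arc and an unused arc. Hence $\delta(G) \geq 2$ forces $G$ to use either all $3k$ vertices or, equivalently since isolated vertices are forbidden, to be supported on the whole cycle. So I would first reduce to understanding which edge subsets of $C_{3k}^{(3)}$ yield minimum degree at least two.

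Next I would set up a combinatorial encoding. Identify the edges of $C_{3k}^{(3)}$ with $\mathbb{Z}_{3k}$ (edge $e_i = \{i, i+1, i+2\}$), and for a subset $S \subseteq \mathbb{Z}_{3k}$ of edges, compute the degree of vertex $j$ as $|\{e_i \in S : i \in \{j-2,j-1,j\}\}|$, i.e.\ the number of chosen edges among three consecutive ones. The condition $\delta(G) \geq 2$ then says: in the cyclic binary string indicating $S$, every window of three consecutive positions contains at least two $1$'s; equivalently, the gaps between consecutive chosen edges (in cyclic order) are all of length $\leq 1$ (no two consecutive $0$'s), \emph{and} one must also rule out a single isolated block pattern — but actually ``every 3-window has $\geq 2$ ones'' is cleanly equivalent to ``the complement $\bar S$ of chosen edges is an independent set in the cycle $C_{3k}$'', i.e.\ no two missing edges are cyclically adjacent. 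So $\kappa_m(C_{3k}^{(3)})$ equals the number of ways to choose $3k - m$ non-adjacent positions (the missing edges) on the cycle $\mathbb{Z}_{3k}$, provided we also check the degenerate small cases. Writing $m = 2k + i$, we are choosing $k - i$ non-adjacent edges to delete out of $3k$ arranged in a cycle, whose count is the standard $\frac{3k}{3k-(k-i)}\binom{3k-(k-i)}{k-i} = \frac{3k}{2k+i}\binom{2k+i}{k-i}$; then I would check this matches $\frac{3k}{k+2i}\binom{k+2i}{3i}$ via the identity $\binom{2k+i}{k-i} = \binom{2k+i}{3i}$ (since $(k-i)+3i = k+2i \le 2k+i$) and $2k+i$ versus $k+2i$ — here I need to be careful, so I would re-derive the number of independent $(k-i)$-subsets of $C_{3k}$ directly rather than quoting, and confirm the two binomials agree by the substitution $j := k-i$ giving $\binom{3k-j}{j}$ and matching denominators.

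For the vanishing claim $\kappa_i(C_{3k}^{(3)}) = 0$ for $i < 2k$, the point is that $\delta(G) \geq 2$ with support on all $3k$ vertices forces $\sum_j d(j) = 3\,\mathrm{e}(G) \geq 2 \cdot 3k = 6k$, hence $\mathrm{e}(G) \geq 2k$; I should also handle the possibility I glossed over, that $G$ might be supported on a proper subset of vertices — but as argued in the first paragraph (and in the proof of \cref{th:root}), a nonempty subgraph of a tight cycle with no degree-one vertices has no isolated vertices and is therefore supported on the entire vertex set, so this case does not arise.

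\textbf{Main obstacle.} I expect the genuinely delicate step to be the clean bijective reformulation ``$\delta(G) \ge 2$ $\iff$ deleted edges form an independent set in $C_{3k}$,'' together with getting the small/boundary cases right (e.g.\ $i = k$, where $G = C_{3k}^{(3)}$ itself, and whether the independent-set count degenerates when $k - i$ is $0$ or when two deletions would wrap around). Everything after that is a routine manipulation of the standard formula $\frac{n}{n-j}\binom{n-j}{j}$ for the number of $j$-element independent sets in $C_n$, which I would either cite or prove in one line by the usual ``fix whether a distinguished vertex is deleted'' dichotomy. Verifying the closed form matches the stated $\frac{3k}{k+2i}\binom{k+2i}{3i}$ is then just bookkeeping with the substitution $j = k-i$.
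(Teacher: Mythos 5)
Your reduction contains a genuine error at the crucial combinatorial step, and in fact you already noticed the symptom of it at the end of your second paragraph without diagnosing the cause. You correctly observe that $\deg(v) \ge 2$ for all $v$ is equivalent to saying that every cyclic window of three consecutive edge indices contains at least two chosen edges. But your next claim — that this is the same as ``no two deleted edges are cyclically adjacent,'' i.e.\ that the deleted edge indices form an independent set in the ordinary cycle $C_{3k}$ — is false. ``Every $3$-window has at least two $1$'s'' means ``every $3$-window has at most one $0$,'' which forces any two deleted indices to be at cyclic distance at least $3$, not at least $2$. In hypergraph terms, the deleted edges must be pairwise \emph{disjoint} as $3$-element sets; deleting $e_i$ and $e_{i+2}$, for example, drops vertex $i+2$ to degree one even though the indices are not adjacent.

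This is not a cosmetic issue: it changes the count. For $k=2$, $i=0$ (i.e.\ $\kappa_4(C_6^{(3)})$), your formula $\frac{3k}{2k+i}\binom{2k+i}{k-i} = \frac{6}{4}\binom{4}{2} = 9$, whereas the true value is $3$ (the only disjoint pairs of edges in $C_6^{(3)}$ are $\{e_1,e_4\}$, $\{e_2,e_5\}$, $\{e_3,e_6\}$, matching $P_{C_6^{(3)}}(x) = x^4(3+6x+x^2)$). You flagged that $\binom{2k+i}{k-i}$ does not equal $\binom{2k+i}{3i}$ and that ``$2k+i$ versus $k+2i$'' needs care, but the resolution is not a more careful re-derivation of the $C_{3k}$ independent-set count; it is that you need the count of $j$-subsets of $\mathbb{Z}_{3k}$ with all pairwise cyclic gaps $\ge 3$, which is $\frac{3k}{j}\binom{3k-1-2j}{j-1}$. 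Substituting $j=k-i$ and simplifying gives $\frac{3k}{k+2i}\binom{k+2i}{k-i} = \frac{3k}{k+2i}\binom{k+2i}{3i}$, exactly as stated. The rest of your outline — the support argument ruling out proper vertex subsets, the degree-sum bound giving $\kappa_i = 0$ for $i < 2k$, and the boundary case $i=k$ — is fine and matches the paper's approach.
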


\begin{proof}[\bf{Proof}]
A subgraph $G$ of $C_{3k}^{(3)}$ with $i$ edges 
such that each vertex has degree $2$ or $3$ 
must be obtained from $C_{3k}^{(3)}$ by removing 
$3k-i$ disjoint edges.
But the number of disjoint edges cannot exceed $k$ and 
the number of ways to select $1 \leq j \leq k$ 
independent edges in $C_{3k}^{(3)}$ is
$\frac{3k}{j} \binom{3k-1-2j}{j-1}$. 
Thus, $\kappa_i(C_{3k}^{(3)})=0$ for $i<2k$ and
\begin{align*}
  \kappa_{2k+i}(C_{3k}^{(3)}) & = 
  \frac{3k}{k-i} \binom{3k-1-2(k-i)}{k-i-1}
  = \frac{3k}{k-i} \binom{k+2i-1}{k-i-1}
  \\ &
  = \frac{3k}{k+2i} \binom{k+2i}{k-i}
  = \frac{3k}{k+2i} \binom{k+2i}{3i}
\end{align*}
for $0 \leq i \leq k-1$. 
Moreover, $\kappa_{3k}(C_{3k}^{(3)})=1$. 
\end{proof}

\begin{lemma}\label{th:Pk3-ec}
$\kappa_{i}(C_{3k}^{(3)}-e)=0$ for $i<2k$ 
and $\kappa_{2k+i}(C_{3k}^{(3)}-e) = \binom{k+2i-1}{3i}$ 
for $0 \leq i \leq k-1$. 
\end{lemma}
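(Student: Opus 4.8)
The plan is to mirror the proof of~\cref{th:Pk3c}: characterise the relevant subgraphs of $C_{3k}^{(3)}-e$ as complements of matchings, then count those matchings. First I would fix the deleted edge: by the vertex- and edge-transitivity of the tight cycle we may take $e=\{3k,1,2\}$, so that $C_{3k}^{(3)}-e$ has edges $e_j:=\{j,j+1,j+2\}$ for $j=1,\dots,3k-1$ (indices mod $3k$), the three vertices $3k,1,2$ of $e$ having degree $2$ and every other vertex degree $3$. Exactly as in the proof of~\cref{th:root}, the consecutive-degree property of subgraphs of a tight cycle forces a non-empty subgraph $G\subseteq C_{3k}^{(3)}-e$ with no degree-one vertex to have minimum degree at least $2$; hence, for $m\geq 1$, $\kappa_m(C_{3k}^{(3)}-e)$ counts the $m$-edge subgraphs of minimum degree at least $2$.

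The key step is the structural description of these subgraphs. Writing $G=(C_{3k}^{(3)}-e)\setminus S$, if $\delta(G)\geq 2$ then each degree-$3$ vertex of the host lies in at most one edge of $S$, while each degree-$2$ vertex $3k,1,2$ lies in none; equivalently, $S$ is a matching contained in $\{e_3,\dots,e_{3k-3}\}$, the set of edges incident to none of $3k,1,2$. Conversely, deleting any such matching leaves all degree-$3$ vertices with degree at least $2$ and all degree-$2$ vertices untouched, so $\delta(G)\geq 2$. Since $G$ then has $4+(3k-5-|S|)=3k-1-|S|$ edges, it follows that $\kappa_{3k-1-j}(C_{3k}^{(3)}-e)$ equals the number of $j$-edge matchings in the tight path on vertex set $\{3,4,\dots,3k-1\}$, i.e.\ the $3$-graph with edges $e_3,\dots,e_{3k-3}$.

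Counting these matchings is routine: two edges $e_a,e_b$ with $a<b$ are disjoint iff $b\geq a+3$, so a $j$-edge matching is a $j$-subset of $\{3,\dots,3k-3\}$ with consecutive gaps at least $3$, and the standard gap substitution gives $\binom{3k-3-2j}{j}$ such subsets, a quantity that vanishes precisely when $j$ exceeds the maximum matching size $k-1$. Substituting $j=3k-1-m$ and then $m=2k+i$ turns $\binom{3k-3-2j}{j}$ into $\binom{k+2i-1}{k-1-i}=\binom{k+2i-1}{3i}$, and the vanishing for $j>k-1$ becomes the asserted vanishing of $\kappa_i$ for $i<2k$.

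The only real content is the structural step in the second paragraph; the binomial bookkeeping is mechanical. The points that need care are the correct identification of the four forced edges $e_{3k-2},e_{3k-1},e_1,e_2$ incident to the three degree-$2$ vertices (so that the ``free'' part really is a tight path on $3k-3$ vertices), and the translation between the parameters $m$, $j$ and $i$.
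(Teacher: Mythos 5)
Your proof is correct, but it takes a genuinely different route from the paper. The paper's proof is a one-liner: it invokes the identity $\kappa_i(C_{kr}^{(r)}-e) = \frac{kr-i}{kr}\,\kappa_i(C_{kr}^{(r)})$, which follows from the edge-transitivity of the tight cycle by double-counting pairs $(G,f)$ with $G$ a subgraph having $i$ edges and no degree-one vertices and $f \notin E(G)$, and then applies \cref{th:Pk3c} together with the binomial identity $\frac{k-i}{k+2i}\binom{k+2i}{3i} = \binom{k+2i-1}{3i}$. You instead redo the direct combinatorial count from scratch: you characterize the relevant subgraphs of $C_{3k}^{(3)}-e$ as complements of matchings in the tight path $e_3,\dots,e_{3k-3}$ (after observing that the four edges incident to the three degree-two host vertices are forced), and count those matchings by the gap substitution. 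Both arguments are sound and yield the same binomial coefficient; yours is more explicit and self-contained, while the paper's is shorter, leverages the already-proved \cref{th:Pk3c}, and makes the relation between $\kappa_i(C_{kr}^{(r)})$ and $\kappa_i(C_{kr}^{(r)}-e)$ transparent as a consequence of symmetry rather than a coincidence of two separate counts.
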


\begin{proof}[\bf{Proof}]
The statement follows from \cref{th:Pk3c} and 
the fact that 
$\kappa_i(C_{kr}^{(r)}-e) = \frac{kr-i}{kr} \kappa_i(C_{kr}^{(r)})$. 
\end{proof}

We also need to verify some elementary inequalities.

\begin{lemma}\label{th:lem}
For integers $k \geq 2$ and $i\geq 1$,
\[
  \frac{(k+2i+1)(k+2i)(k-i)}{(3i+3)(3i+2)(3i+1)} 
  \leq \frac{k^3+k^2}{60}.
\]
\end{lemma}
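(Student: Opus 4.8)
The plan is to treat this as a routine optimization problem in the two integer variables $k$ and $i$, reducing it to a one-variable question. Write the claimed inequality as
\[
  60(k+2i+1)(k+2i)(k-i) \leq (k^3+k^2)(3i+3)(3i+2)(3i+1),
\]
and note that the right-hand side is strictly increasing in $i\geq 1$ for each fixed $k\geq 2$, while the left-hand side, viewed as a function of the real variable $i$, is a cubic that is eventually decreasing (its leading term in $i$ is negative, coming from the factor $(k-i)$). So the natural approach is: first handle small $i$ by hand, then show that for $i$ large the left side is already negative or tiny while the right side is large, and finally deal with the ``middle'' range.

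Concretely, the first step I would take is to check the boundary case $i=1$, where the inequality becomes $60(k+3)(k+2)(k-1) \leq 60(k^3+k^2)$, i.e. $(k+3)(k+2)(k-1)\leq k^3+k^2$; expanding gives $k^3+4k^2+k-6 \leq k^3+k^2$, i.e. $3k^2+k-6\leq 0$, which fails for $k\geq 2$ — so I should double check the intended reading. (Most likely the statement is really an inequality of the displayed fractional form with the fractions as written, and a quick recheck shows that with $i=1$, $k=2$ the left fraction is $\tfrac{7\cdot6\cdot1}{6\cdot5\cdot4}=\tfrac{42}{120}=0.35$ while the right side is $\tfrac{8+4}{60}=0.2$; so in fact one needs either $k$ larger or a different normalization, and I would simply follow whatever normalization the surrounding proof of \cref{th:Pk3} actually forces — the lemma is a black-box arithmetic fact used there, so the ``correct'' constants are pinned down by that application.) Taking the statement at face value, the key observations are that $k-i \le 0$ for $i \ge k$, making the left side non-positive and the inequality trivial, so one only needs $1 \le i \le k-1$.

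For the main range $1\le i\le k-1$, the plan is to bound each of the three factors on the left crudely: $k+2i+1 \le 3k$, $k+2i \le 3k$ (using $i \le k$), and $k - i \le k$, giving a left side at most $\tfrac{9k^2 \cdot k}{(3i+3)(3i+2)(3i+1)} = \tfrac{9k^3}{(3i+3)(3i+2)(3i+1)}$; then since $(3i+3)(3i+2)(3i+1) \ge 6\cdot5\cdot4 = 120$ for $i\ge1$, the left side is at most $\tfrac{9k^3}{120} = \tfrac{3k^3}{40}$, which one compares against $\tfrac{k^3+k^2}{60} \ge \tfrac{k^3}{60}$ — this crude bound is off by a constant, so it does \emph{not} immediately close, confirming that the inequality is genuinely tight-ish and the factor $(k-i)$ shrinking must be exploited, or the small values of $i$ treated separately with exact arithmetic. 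The cleanest route is therefore: (a) for $i=1$, reduce to a quadratic-in-$k$ inequality and verify it directly for all $k\ge 2$ (or for $k$ past a small threshold, checking the finitely many remaining cases numerically); (b) for $i \ge 2$, use $(3i+3)(3i+2)(3i+1) \ge 9\cdot8\cdot7 = 504$ together with $(k+2i+1)(k+2i)(k-i) \le (3k)(3k)(k) = 9k^3$ to get left side $\le \tfrac{9k^3}{504} = \tfrac{k^3}{56} \le \tfrac{k^3+k^2}{60}$ precisely when $60 \le 56\cdot\tfrac{k^3+k^2}{k^3} = 56(1+1/k)$, i.e. $1/k \ge 1/14$, i.e. $k \le 14$ — again only a finite range survives and the complementary range $k \ge 15$ needs the sharper bound $i \le k-1 \Rightarrow k-i \le k-2$ and $k+2i \le 3k-2$; (c) finish the finitely many leftover pairs $(k,i)$ by direct computation.

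The main obstacle I anticipate is precisely that the inequality is close to tight for small $i$ and moderate $k$, so no single crude estimate suffices; the proof will inevitably split into a ``small $i$'' case analyzed by an explicit polynomial inequality in $k$ and a ``large $i$'' case where the cubic in $i$ on the right outgrows the (eventually decreasing) cubic in $i$ on the left, with a handful of boundary pairs $(k,i)$ checked numerically. Since this lemma is used only as an arithmetic input to \cref{th:Pk3}, I would keep the write-up terse: state the reduction to $1 \le i \le k-1$, dispose of $i=1$ via the resulting quadratic in $k$, dispose of $i\ge 2$ via the monotonicity of $(3i+1)(3i+2)(3i+3)$ against the bound $(k+2i+1)(k+2i)(k-i)\le 9k^3$ refined by $k-i\le k-2$, and wave at the finite check.
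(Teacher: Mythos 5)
Your doubt about the lemma's correctness stems from two arithmetic slips, and the approach you sketch is genuinely different from (and messier than) the one in the paper.

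First, the slips. At $k=2$, $i=1$ you wrote the numerator as $7\cdot 6\cdot 1$, but $k+2i+1 = 5$ and $k+2i = 4$, so the left side is $\frac{5\cdot 4\cdot 1}{6\cdot 5\cdot 4} = \frac{1}{6} \approx 0.167 \leq 0.2$, and the inequality holds. Separately, when you cleared denominators at $i=1$ you got $60(k+3)(k+2)(k-1) \leq 60(k^3+k^2)$, but $(3i+3)(3i+2)(3i+1) = 120$ at $i=1$, so the correct form is $(k+3)(k+2)(k-1) \leq 2(k^3+k^2)$, i.e.\ $k^3 - 2k^2 - k + 6 \geq 0$, which does hold for $k\geq 2$. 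The lemma is fine as stated; there is no normalization to adjust.

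Second, the approach. You try to bound the three numerator factors crudely ($k+2i+1\leq 3k$, etc.) against a lower bound on the denominator, discover this is too lossy, and then propose a multi-way case split (small $i$, large $i$, finite leftover pairs) that you do not actually close. The paper instead makes one clean observation: each of the three ratios
\[
\frac{k+2i+1}{3i+3},\qquad \frac{k+2i}{3i+2},\qquad \frac{k-i}{3i+1}
\]
is individually decreasing in $i$ (for $k\geq 2$), so the whole product is maximized at $i=1$, where it equals $\frac{(k+3)(k+2)(k-1)}{120}$. That reduces the lemma to the single-variable inequality $k^3 - 2k^2 - k + 6 \geq 0$ for $k\geq 2$, disposed of by checking $k=2$ and the sign of the derivative. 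This is the step your write-up is missing: you should verify monotonicity of each ratio rather than replace the factors by crude uniform bounds, and then the case analysis and "finite check" disappear entirely. As it stands, your proposal has a genuine gap (the $k\geq 15$, $i\geq 2$ branch is not finished and the "finite check" is not carried out), in addition to the arithmetic errors that led you off course.
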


\begin{proof}[\bf{Proof}]
It is easy to check that each of the ratios 
$\frac{k+2i+1}{3i+3}$, 
$\frac{k+2i}{3i+2}$, 
$\frac{k-i}{3i+1}$ 
decreases with $i$. 
Hence,
\[
  \frac{(k+2i+1)(k+2i)(k-i)}{(3i+3)(3i+2)(3i+1)} 
  \leq \frac{(k+3)(k+2)(k-1)}{120} 
  = \frac{k^3 + 4k^2 + k - 6}{120}.
\]
We therefore need to show that 
$k^3 + 4k^2 + k - 6 \leq 2(k^3+k^2)$, 
which is equivalent to 
$F(k) := k^3 - 2k^2 - k + 6 \geq 0$. 
But this follows since $F(2)=4$ 
and $F'(k) = 3 k^2 - 4k - 1 = 3k(k-2) + 2k - 1 > 0$ 
for $k \geq 2$.
\end{proof}

\begin{lemma}\label{th:lem2}
For integers $k \geq 3$ and $i\geq 1$,
\[
  \frac{(k+2i+1)(k+2i)(k-i-1)}{(3i+3)(3i+2)(3i+1)} 
  \leq \frac{7}{600} (k^3-k).
\]
\end{lemma}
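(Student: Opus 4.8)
The plan is to follow the template of the proof of \cref{th:lem}: replace the left-hand side by its value at $i=1$ using monotonicity in $i$, then reduce what remains to a single polynomial inequality in $k$ and verify it by factoring.

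First I would dispose of the trivial range. If $i \geq k-1$, then $k-i-1 \leq 0$, so the left-hand side is non-positive, while the right-hand side $\frac{7}{600}(k^3-k)$ is non-negative for $k \geq 3$; hence the inequality holds. So we may assume $1 \leq i \leq k-2$. On this range I would check that each of the three ratios
\[
\frac{k+2i+1}{3i+3}, \qquad \frac{k+2i}{3i+2}, \qquad \frac{k-i-1}{3i+1}
\]
decreases in $i$: differentiating in $i$, the numerators of the derivatives (denominator${}^2$ times the derivative) simplify to $3-3k$, $4-3k$, and $2-3k$ respectively, all negative for $k \geq 3$ (alternatively one cross-multiplies consecutive values). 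Since the first two ratios are positive for all $i \geq 1$ and the third is positive on $1 \leq i \leq k-2$, the product is positive and decreasing there, so it is at most its value at $i=1$, namely
\[
\frac{(k+3)(k+2)(k-2)}{6\cdot 5\cdot 4}=\frac{(k+3)(k+2)(k-2)}{120}.
\]

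It then suffices to show $\dfrac{(k+3)(k+2)(k-2)}{120}\leq \dfrac{7}{600}(k^3-k)$, which, after multiplying through by $600$, becomes $5(k+3)(k+2)(k-2)\leq 7k(k-1)(k+1)$. Expanding both sides and rearranging, this is equivalent to
\[
2k^3-15k^2+13k+60\geq 0,
\]
and the left-hand side factors as $(k-4)(k-5)(2k+3)$. I would finish by checking this is non-negative for every integer $k\geq 3$: at $k=3$ it equals $(-1)(-2)(9)=18>0$, at $k=4$ and $k=5$ it equals $0$, and for $k\geq 6$ all three factors are positive.

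The one point worth flagging — and the only place this differs in flavour from \cref{th:lem} — is that $(k-4)(k-5)(2k+3)$ is \emph{strictly negative} for real $k\in(4,5)$, so one cannot argue via a derivative or convexity bound as there; the statement is genuinely an integer inequality, and the cases $k=4$ and $k=5$, which are the equality cases, must be treated by direct substitution. Everything else is a routine expansion.
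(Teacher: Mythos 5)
Your proof is correct and follows the same route as the paper's: bound the product by its value at $i=1$ using the monotonicity of the three ratios, then reduce to the cubic inequality $2k^3-15k^2+13k+60\ge 0$ for integers $k\ge 3$. Two small points of comparison: the paper verifies the cubic by computing $F(3)=18$, $F(4)=F(5)=0$ and showing $F'(k)=6k(k-5)+13>0$ for $k\ge 5$ (so a derivative argument \emph{is} used, just anchored at $k=5$ rather than $k=3$), whereas your factorisation $(k-4)(k-5)(2k+3)$ is an equally valid and arguably tidier finish; and your explicit treatment of the range $i\ge k-1$, where the third ratio is non-positive and the claim is trivial, patches a step the paper leaves implicit, since ``each ratio decreases'' only yields the product bound directly when all three remain nonnegative.
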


\begin{proof}[\bf{Proof}]
It is easy to check that each of the ratios 
$\frac{k+2i+1}{3i+3}$, 
$\frac{k+2i}{3i+2}$, 
$\frac{k-i-1}{3i+1}$ 
decreases with $i$. 
Hence,
\[
  \frac{(k+2i+1)(k+2i)(k-i-1)}{(3i+3)(3i+2)(3i+1)} 
  \leq \frac{(k+3)(k+2)(k-2)}{120} 
  = \frac{k^3 + 3k^2 - 4k - 12}{120}.
\]
We therefore need to show that 
$k^3 + 3k^2 - 4k - 12 \leq \frac{7}{5} (k^3-k)$, 
which is equivalent to 
$F(k) := 2k^3 - 15k^2 + 13k + 60 \geq 0$. 
But this follows since $F(3)=18$, $F(4)=F(5)=0$ 
and $F'(k) = 6k^2 - 30k  + 13 = 6k(k-5) + 13 > 0$ 
for $k \geq 5$.
\end{proof}

We are already in a position to prove Theorems~\ref{th:Pk3} and \ref{th:Pk3-e}.

\begin{proof}[\bf{Proof of \cref{th:Pk3}}]
By \cref{th:root}, it will be sufficient to find some 
$x \in [-1,0]$ such that $P_{C_{3k}^{(3)}}(x) < 0$. 
The coefficients of $P_{C_{3k}^{(3)}}$ are given by \cref{th:Pk3c}. 
It is easy to check that 
$P_{C_{6}^{(3)}}(x) = x^4 (3+6x+x^2)$ 
is negative at $x=-\frac{2}{3}$. 
Thus, we may assume that $k \geq 3$. 
For a fixed $k$ and $1 \leq i \leq k$, set 
\[ 
  A_i \, := \, \frac{3k}{k+2i} \binom{k+2i}{3i} \,
  \Big(\frac{30}{k^3+k^2}\Big)^i .
\]
By \cref{th:lem}, for $1 \leq i \leq k-1$, 
\begin{align*}
  \frac{A_{i+1}}{A_i} \: = \: 
  \frac{(k+2i+1)(k+2i)(k-i)}{(3i+3)(3i+2)(3i+1)} 
  \cdot \frac{30}{k^3+k^2}
  \:\leq\: \frac{1}{2}.
\end{align*}
Set $x=\frac{30}{k^3+k^2}$. 
As $A_{2j} \leq \frac{1}{2} A_{2j-1} \leq A_{2j-1}$, 
we get 
\begin{align*}
  x^{-2k} P_{C_{3k}^{(3)}}(-x) \: & = \: 3 + \sum_{i=1}^k (-1)^i A_i 
  \:\leq\: 3 - A_1 + A_2
  \:\leq\: 3 - \frac{1}{2} A_1 
  \\ & = \: 3 - \frac{1}{2} \, \frac{3k}{k+2} \, \binom{k+2}{3} \, \frac{30}{k^3+k^2} 
  \: = \: 3 - \frac{15}{2} \: < \: 0,
\end{align*}
as required.
\end{proof}

\begin{proof}[\bf{Proof of \cref{th:Pk3-e}}]
By \cref{th:root}, 
it will be sufficient to find some 
$x \in [-1,0]$ such that $P_{C_{3k}^{(3)}-e}(x) < 0$. 
The coefficients of $P_{C_{3k}^{(3)}-e}$ are given by \cref{th:Pk3-ec}. 
It is easy to check that 
$P_{C_{9}^{(3)}-e}(x) = x^6 (1+4x+x^2)$ 
is negative at $x=-\frac{2}{3}$. 
Thus, we may assume $k \geq 4$. 
For a fixed $k$ and $1 \leq i \leq k-1$, set 
\[ 
  B_i \, := \, \binom{k+2i-1}{3i} \, 
  \left(\frac{300}{7(k^3-k)}\right)^i. 
\]
By \cref{th:lem2}, for $1 \leq i \leq k-2$, 
\begin{align*}
  \frac{B_{i+1}}{B_i} \: = \: 
  \frac{(k+2i+1)(k+2i)(k-i-1)}{(3i+3)(3i+2)(3i+1)} 
  \cdot \frac{300}{7(k^3-k)}
  \:\leq\: \frac{1}{2}.
\end{align*}
Set $x = \frac{300}{7(k^3-k)}$. 
As $B_{2j} \leq \frac{1}{2} B_{2j-1} \leq B_{2j-1}$, 
we get 
\begin{align*}
  x^{-2k} P_{C_{3k}^{(3)}-e}(-x) \: & = \: 1 + \sum_{i=1}^{k-1} (-1)^i B_i 
  \:\leq\: 1 - B_1 + B_2
  \:\leq\: 1 - \frac{1}{2} B_1 
  \\ & = \: 1 - \frac{1}{2} 
    \binom{k+1}{3} \, \frac{300}{7(k^3-k)} 
  \: = \: 1 - \frac{25}{7} \: < \: 0,
\end{align*}
as required.
\end{proof}

For the proof of \cref{th:P2r}, we need to do a little more work. 
Consider an $m$-element subset $A\subseteq\ZZ_{2r}$ and assume that its elements are cyclically ordered as
$A=(x_0,x_1,\dots,x_m=x_0)$. 
We say that $x_i$ is \emph{good} if 
$x_{i+1}-x_{i-1} \in \{2,\dots,r\}$ 
and \emph{bad} otherwise. 

\begin{lemma}\label{th:bad}
The number of $m$-element subsets $A\subseteq\ZZ_{2r}$ 
that have at least one bad element is 
$2r(m-2)\binom{r}{m-1}$ for 
$m \geq 4$. 
\end{lemma}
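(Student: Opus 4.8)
The plan is to recode this question about subsets of $\ZZ_{2r}$ as a question about compositions of $2r$. Given an $m$-element set $A=(x_0,\dots,x_{m-1})$ listed in cyclic order and a distinguished element $x_j$, record the consecutive gaps $(d_1,\dots,d_m)$ with $d_i=x_i-x_{i-1}$, read off starting from $x_j$. This is a bijection between pairs $(A,\text{distinguished element of }A)$ and pairs (composition $d_1+\dots+d_m=2r$ into $m$ positive parts, starting vertex $v\in\ZZ_{2r}$). Since $m\ge 3$ we have $d_i+d_{i+1}\le 2r-(m-2)<2r$, so the element $x_{i+1}-x_{i-1}$ of $\ZZ_{2r}$ is the genuine integer $d_i+d_{i+1}$; hence $x_i$ is bad exactly when $d_i+d_{i+1}\ge r+1$, and $A$ has a bad element precisely when some two cyclically consecutive parts of the associated composition sum to at least $r+1$. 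Call such a composition \emph{bad}; this property does not involve the starting vertex $v$, so counting $(A,\text{distinguished element})$ pairs two ways gives
\[ m\cdot\#\{m\text{-subsets of }\ZZ_{2r}\text{ with a bad element}\} \;=\; 2r\cdot\#\{\text{bad compositions}\}. \]
It therefore suffices to prove that there are exactly $m(m-2)\binom{r}{m-1}$ bad compositions.

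To count these I would use inclusion--exclusion over the events $E_i=\{d_i+d_{i+1}\ge r+1\}$, $i\in\ZZ_m$. The key structural fact, and the unique place where $m\ge 4$ is used, is that a composition of $2r$ into positive parts cannot satisfy two of these events whose index pairs $\{i,i+1\}$ and $\{j,j+1\}$ are disjoint, since then $d_i+d_{i+1}+d_j+d_{j+1}\ge 2r+2>2r$; for $m\ge 4$ this forces the large pairs to be a single edge or two adjacent edges of the $m$-cycle, so every intersection of three or more of the $E_i$ is empty and $E_i\cap E_j=\emptyset$ unless $j=i\pm1$. Inclusion--exclusion then collapses to $\#\{\text{bad compositions}\}=mP-mQ$, where $P=|E_1|$ and $Q=|E_1\cap E_2|$.

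Both $P$ and $Q$ are then pure stars-and-bars computations finished off with the hockey-stick identity. Summing over $e=d_1+d_2$ gives $P=\sum_{e=r+1}^{2r-m+2}(e-1)\binom{2r-e-1}{m-3}$, which, using $u\binom{u-1}{m-3}=(m-2)\binom{u}{m-2}$, collapses to $P=(2r-1)\binom{r-1}{m-2}-(m-2)\binom{r}{m-1}$. For $Q$ I would fix $d_2=b$, express the lower bounds on $d_1,d_3$ through $\delta=\max(1,r+1-b)$, identify the resulting count over the free variables as a Vandermonde-type convolution equal to $\binom{M+m-2}{m-2}$ for $M$ linear in $b$, then split the range of $b$ at $b=r$ and apply hockey-stick twice to get $Q=\binom{r-1}{m-1}+\binom{r}{m-1}$. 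The desired identity $P-Q=(m-2)\binom{r}{m-1}$ then reduces, after writing $\binom{r}{m-1}=\binom{r-1}{m-1}+\binom{r-1}{m-2}$, to the immediate $(r-m+1)\binom{r-1}{m-2}=(m-1)\binom{r-1}{m-1}$. Hence $\#\{\text{bad compositions}\}=m(m-2)\binom{r}{m-1}$, which is what we needed.

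The main obstacle I anticipate is the evaluation of $Q$: one must handle the case split according to whether $d_1,d_3$ are forced above $1$, keep the summation limits honest so that each hockey-stick step is valid, and recognise the Vandermonde-type identity for the inner sum. The reduction to compositions and the observation that at most two (necessarily adjacent) pairs can be large are both short. As a final sanity check one should confirm that the degenerate ranges behave correctly --- for instance when $m-1>r$ every binomial in sight, and both sides of the claimed formula, vanish --- but this causes no real trouble.
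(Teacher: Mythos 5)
Your proof is correct, though the route diverges from the paper's at the key counting step. Both arguments start from the same structural observation---that bad points come in runs of length one or two---but you then recode the $m$-subsets as cyclic compositions of $2r$ and run inclusion--exclusion over the $m$ events $E_i=\{d_i+d_{i+1}\ge r+1\}$, using the observation to truncate the expansion at the second level, so that $\#\{\text{bad compositions}\}=mP-mQ$. I checked your closed forms $P=(2r-1)\binom{r-1}{m-2}-(m-2)\binom{r}{m-1}$ and $Q=\binom{r-1}{m-1}+\binom{r}{m-1}$ and the final reduction $P-Q=(m-2)\binom{r}{m-1}$; they hold, including the degenerate regime $m-1>r$. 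The paper instead uses the run structure to produce a \emph{unique} index $j$ with $x_j$ good and $x_{j-1}$ bad, anchors $x_j=0$ (giving the factor $2r$), and counts the remaining vertices directly as a single sum $\sum_{i}(r-i)\binom{r-1-i}{m-3}=(m-2)\binom{r}{m-1}$. The paper's device sidesteps inclusion--exclusion entirely and needs only one hockey-stick evaluation, so it is noticeably shorter; your version costs the extra $Q$ computation and the closing binomial identity, but it is more mechanical and generalises routinely to other forbidden-gap problems where no such canonical index is available.
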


\begin{proof}[\bf{Proof}]
Suppose $A=(x_0, x_1, \dots, x_m=x_0)$ is such a subset. 
Notice that if $x_i$ and $x_j$ are two distinct bad points, 
then $i-j = \pm 1$. 
Hence, there is either just one bad point or there are two consecutive bad points. 
Thus, there exists a unique index $j$ such that $x_j$ is good
and $x_{j-1}$ is bad. 
Without loss of generality, we may assume that $j=1$. 
$x_1$ can have any of the $2r$ possible values. 
We will assume that $x_1=0$ and show that there are then 
exactly $(m-2) \binom{r}{m-1}$ choices 
for $x_0,x_2,\dots,x_{m-1}$. 
As $x_1$ is good, $x_2-x_0 \in \{2,\dots,r\}$. 
As $x_0$ is bad, $x_1-x_{m-1} \in \{r+1,\dots,2r-1\}$, 
so $x_{m-1} \in \{1,\dots,r-1\}$. 
If $x_2=i$, there are $r-i$ choices for $x_0$ 
and $\binom{(r-1)-i}{m-3}$ choices for $x_3,\dots,x_{m-1}$. 
Thus, the total number of choices is 
\begin{align*}
  \sum_{i=1}^{r-1} (r-i) \binom{(r-1)-i}{m-3} 
  \: = \:
  \sum_{i=1}^{r-1} (m-2) \binom{r-i}{m-2}
  \: = \:
  (m-2) \binom{r}{m-1},
\end{align*}
as required.
\end{proof}

Note that a subgraph $H$ of the tight cycle $C_{2r}^{(r)}$ has no  degree-one vertices if and only if the set $A$ of initial vertices of edges in $H$ contains no bad elements. Thus, we have the following immediate corollary of~\cref{th:bad}.

\begin{corollary}\label{th:P2rc}
\[
  \kappa_i(C_{2r}^{(r)}) = 
  \begin{cases}
  \;\;\;\;\;\;\;\;\;\;\;\;\;\;\;\;\; 
  0 \;\;\;\;\;\;\;\;\;\;\;\;\;\;\;\;\,{\rm if}\;\; i \leq 3, \\
  \binom{2r}{i} - 2r(i-2)\binom{r}{i-1}
  \;\;{\rm if}\;\; 4 \leq i \leq 2r.
  \end{cases}
\]
\end{corollary}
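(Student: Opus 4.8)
The plan is to identify $\kappa_i(C_{2r}^{(r)})$ with a concrete family of subsets of $\ZZ_{2r}$ and then read the count straight off \cref{th:bad}. First I would record the dictionary between subgraphs of $C_{2r}^{(r)}$ with $i$ edges and $i$-element subsets $A\subseteq\ZZ_{2r}$: such a subgraph is specified by which of the $2r$ tight edges $\{j,j+1,\dots,j+r-1\}$ it uses, so it corresponds to the set $A$ of initial vertices of those edges. By the remark immediately preceding the statement, the subgraph indexed by $A$ has no degree-one vertex if and only if $A$ contains no bad element. Hence $\kappa_i(C_{2r}^{(r)})$ is exactly the number of $i$-element subsets of $\ZZ_{2r}$ with no bad element.

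For $4\le i\le 2r$ this becomes a one-line subtraction: among the $\binom{2r}{i}$ subsets of size $i$, \cref{th:bad} says precisely $2r(i-2)\binom{r}{i-1}$ contain at least one bad element, so $\binom{2r}{i}-2r(i-2)\binom{r}{i-1}$ of them contain none, which is the claimed value. I would also sanity-check the far end of the range, noting that at $i=2r$ the term $\binom{r}{i-1}$ vanishes and the formula correctly returns $\binom{2r}{2r}=1$, corresponding to the whole cycle, every vertex of which has degree $r\ge 2$.

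For $1\le i\le 3$ the lemma does not apply, so I would argue directly that no $i$-element subset is all-good, equivalently that every subgraph of $C_{2r}^{(r)}$ with at most three edges has a degree-one vertex. The cases $i=1$ (the $r$ vertices of a single edge all have degree one) and $i=2$ (two distinct length-$r$ arcs of $\ZZ_{2r}$ have nonempty symmetric difference, and a vertex in exactly one of them has degree one) are immediate. For $i=3$ I would use a gap-sum argument: writing the three cyclically consecutive gaps as $g_0,g_1,g_2$ with $g_0+g_1+g_2=2r$, the three quantities $x_{j+1}-x_{j-1}$ are $g_0+g_1$, $g_1+g_2$, $g_2+g_0$, whose sum is $4r$; if all three elements were good, each of these would be at most $r$, forcing $4r\le 3r$, impossible for $r\ge 3$.

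The only genuine obstacle here is this boundary bookkeeping: \cref{th:bad} is established only for $m\ge 4$, so the small cases $i\le 3$ have to be handled by the short ad hoc argument above, and one must check that the closed form remains a legitimate (nonnegative integer) count all the way out to $i=2r$. Everything else is a direct translation of the definition of $\kappa_i$ into the language of \cref{th:bad}.
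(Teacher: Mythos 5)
Your proof is correct and follows essentially the same route as the paper, which simply declares the corollary an ``immediate'' consequence of Lemma~\ref{th:bad} together with the preceding observation that a subgraph of $C_{2r}^{(r)}$ has no degree-one vertices exactly when its set of initial vertices has no bad elements. You spell out a bit more than the paper does: you explicitly verify the $i \le 3$ cases, which do require a separate (short) argument since Lemma~\ref{th:bad} is only stated for $m \ge 4$ and indeed its proof genuinely fails for $m=3$ (e.g.\ all three points of $\{0,2,4\}\subseteq\ZZ_6$ are bad, so there is no good--bad pair to anchor the count). Your gap-sum argument for $i=3$ is sound, and the $i=1,2$ cases and the $i=2r$ sanity check are fine.
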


\begin{proof}[\bf{Proof of \cref{th:P2r}}]
By \cref{th:P2rc}, since $2r(m-2)\binom{r}{m-1} = 0$ for $r+2 \leq m \leq 2r$,
\begin{align*}
  P_{C_{2r}^{(r)}}(x) \: = \: & 
  \sum_{i=4}^{2r} \binom{2r}{i} x^i \: - \:
  \sum_{i=4}^{r+1} 2r(i-2)\binom{r}{i-1} x^i
  \\ = & \:
  \sum_{i=0}^{2r} \binom{2r}{i} x^i \: - \:
  \left(1 + 2rx + r(2r-1) x^2 + \frac{2}{3} r(r-1)(2r-1) x^3\right) 
  \\ & \;\;\;\; 
  - 2r \sum_{i=1}^{r+1} (i-2)\binom{r}{i-1} x^i
  \: + \: 2r \left(-x + \frac{1}{2} r(r-1) x^3\right)
  \\ = & \:
  (1+x)^{2r}
  \: + \: 2r \sum_{i=1}^{r+1} \binom{r}{i-1} x^i
  \: - \: 2r \sum_{i=1}^{r+1} (i-1)\binom{r}{i-1} x^i
  \\ & \;\;\;\; 
  \: - \:
  \left(1 + 4rx + r(2r-1) x^2 + \frac{1}{3} r(r-1)(r-2) x^3\right) 
  \\ = & \:
  (1+x)^{2r} + 2rx(1+x)^r - 2r^2 x^2 (1+x)^{r-1}
  \\ & \;\;\;\; 
  \: - \:
  \left(1 + 4rx + r(2r-1) x^2 + \frac{1}{3} r(r-1)(r-2) x^3\right).
\end{align*}
If $r \geq 16$, set $x = -\frac{1}{r}$. 
Then $(1+x)^{2r} < e^{-2}$ and $(1+x)^r + (1+x)^{r-1} \geq 2e^{-1}$, so that 
\begin{align*}
  P_{C_{2r}^{(r)}}(x) \: < \: 
  e^{-2} - 4e^{-1} + \frac{4}{3} + \frac{2}{3r^2}
  \: < \: -0.002849 + \frac{2}{3r^2}
  \: < \: 0.
\end{align*}
If $r \leq 16$, set $x = -\frac{2}{r}$. 
Then $(1+x)^{2r} < e^{-4}$ and 
\begin{align*}
  4(1+x)^r + 8(1+x)^{r-1} \: = \:
  4((1+x)^r + (1+x)^{r-1}) + 4(1+x)^{r-1} \: > \:
  8e^{-2} + 4e^{-2} = 12e^{-2},
\end{align*}
so that
\begin{align*}
  P_{C_{2r}^{(r)}}(x) \: < \: 
  e^{-4} - 12e^{-2} + \frac{5}{3} - \frac{4}{r} + \frac{16}{3r^2}
  \: < \: 0.060959 - \frac{4}{r} + \frac{16}{3r^2}
  \: < \: 0.
\end{align*}
Therefore, by \cref{th:root}, $C_{2r}^{(r)}$ is not Sidorenko for any odd $r \geq 3$.
\end{proof}

To conclude this section, we note that we have also used \cref{th:root} to show that $C_{kr}^{(r)}$ is not Sidorenko for all values of $k$ and $r$ with $r \ge 5$ odd and $kr \le 30$. This suggests, and we conjecture, that $C_{kr}^{(r)}$ is not Sidorenko for any odd $r\ge 5$ and $k \geq 2$.

\section{Non-common hypergraphs} \label{sec:common}

Recall that an $r$-graph $H$ is \emph{common} if 
$t_H(W) + t_H(1-W) \geq 2^{1-{\rm e}(H)}$ 
for any $r$-graphon
$W:[0,1]^r \rightarrow [0,1]$ and that every Sidorenko 
hypergraph is automatically common.
By substituting $W=\frac{1+f}{2}$, 
we can rewrite the requirement for $H$ to be common as 
$t_H(1+f)+t_H(1-f) \geq 2$ 
for any $r$-variable symmetric measurable function 
$f:[0,1]^r \rightarrow [-1,1]$.
By expanding out, this inequality is equivalent to 
\begin{align}\label{eq:common}
  \sum_{G \subseteq H,\; {\rm e}(G) 
  \equiv 0 \:{\rm mod}\: 2,\; {\rm e}(G)>0
  } t_G(f) 
  \:\geq\: 0.
\end{align}
If \cref{eq:common} fails for some function $f$, 
then $H$ is not common and, hence, is not Sidorenko.  

To state the main result of this section, we need some definitions. 
Following Antol\'in Camarena et al.~\cite{ACHLL12}, we say that an $r$-graph $H$ is \emph{positive} if 
$t_H(W) \geq 0$ for any $r$-variable symmetric function $W:[0,1]^r \rightarrow [-1,1]$. When 
$r \geq 3$, we say that an $r$-graph is \emph{$2$-connected} if the removal of a single vertex or a single edge 
does not disconnect it, while, for $r = 2$, we just mean the usual notion, 
that a graph is $2$-connected if it is not disconnected by the removal of a single vertex. 

\begin{theorem}\label{th:even}
Let $r$ be odd or $r=2$. 
If an $r$-graph $H$ has a non-positive $2$-connected subgraph with $2m$ edges 
and every other subgraph with an even number of edges not exceeding $2m$ 
is either non-positive and $2$-connected or has a vertex of degree $1$, 
then $H$ is non-common. 
\end{theorem}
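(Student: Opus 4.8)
The plan is to exploit the equivalent formulation~\eqref{eq:common}: to show $H$ is non-common, it suffices to exhibit a symmetric measurable $f:[0,1]^r\to[-1,1]$ for which $\sum_{G\subseteq H,\ {\rm e}(G)\ \mathrm{even},\ {\rm e}(G)>0} t_G(f) < 0$. The hypotheses single out the even-edge subgraphs that matter: those with a degree-one vertex will contribute $0$ once $f$ is chosen to be a tensor of mean-zero functions (as in the proof of~\cref{th:root}), leaving only the $2$-connected ones, of which the smallest relevant size is $2m$ and all are non-positive. So the strategy is to build $f$ so that (i) every $t_G(f)$ with $G$ having a degree-one vertex vanishes, (ii) the dominant term in a small-parameter expansion comes from the $2m$-edge subgraphs, and (iii) that dominant term is strictly negative because those subgraphs are non-positive.

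Concretely, I would mimic the construction from \cref{th:root}: take a mean-zero building block $f_\varepsilon$ on $[0,1]$ with $\int f_\varepsilon^d\,d\mu = (-1)^d\varepsilon + O(\varepsilon^2)$ for each fixed $d\ge 2$, and set $g_\varepsilon(x_1,\dots,x_r)=\prod_{i=1}^r f_\varepsilon(x_i)$, so $t_G(g_\varepsilon)=0$ whenever $G$ has a degree-one vertex and, for $G$ with all degrees $\ge 2$ (hence no isolated vertices), $t_G(g_\varepsilon) = (-1)^{{\rm e}(G)}\varepsilon^{{\rm v}(G)} + O(\varepsilon^{{\rm v}(G)+1})$, using that $r$ is odd (or $r=2$) to align the parities of ${\rm e}(G)$ and $\sum d_i = r\cdot {\rm e}(G)$. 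However, this particular $g_\varepsilon$ only sees the $\pm$ sign, not the non-positivity, so instead I would take $f$ to be a small perturbation of the constant graphon tuned to the witnessing non-positive subgraph: fix an $r$-variable symmetric $W_0:[0,1]^r\to[-1,1]$ with $t_{H_0}(W_0)<0$, where $H_0$ is the $2m$-edge non-positive $2$-connected subgraph, and consider $f = \delta\cdot g_\varepsilon + \delta'\cdot(\text{embedding of }W_0)$, or more cleanly work on a finite weighted ground set $\Omega$ and write $f = u\otimes\cdots$ — the key point being to arrange a two-scale expansion in which the leading nonzero coefficient is a positive multiple of $t_{H_0}(W_0)<0$.

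The cleanest route, which I would actually pursue, is the following. For a non-positive $2$-connected $H_0\subseteq H$, pick $W_0$ with $t_{H_0}(W_0)<0$ and $\norm{W_0}_\infty\le 1$; on a finite set $\Omega$ one may assume $W_0$ takes finitely many values. Now define $f_\varepsilon$ on a ground set $\Omega\times[0,1]$ (or a refinement) so that, restricted to the first coordinate it behaves like the mean-zero block above at scale $\varepsilon$, killing all subgraphs with a pendant vertex and making each surviving $G$ contribute $\varepsilon^{{\rm v}(G)}$ times a combinatorial factor, while the $[0,1]$-coordinate carries $W_0$ and supplies the sign. One expands $\sum_{G} t_G(f_\varepsilon)$ in powers of $\varepsilon$; by the hypothesis, the lowest-order surviving terms are exactly the $2$-connected subgraphs with $2m$ edges, and each such term is a non-negative multiple of some $t_{G}(W_0)\le 0$ with the $H_0$-term strictly negative, so the whole sum is $<0$ for $\varepsilon$ small. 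The constraint ${\rm e}(G)$ even is automatic since we only sum over even-edge subgraphs, and the parity hypothesis on $r$ ensures no sign cancellation turns a negative leading term positive.

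The main obstacle I anticipate is controlling the \emph{intermediate} even-edge subgraphs — those with more than $2m$ edges but which might still produce terms of the same order $\varepsilon^{{\rm v}(H_0)}$ in the expansion (for instance if they have the same vertex count). One must check that the leading-order coefficient is genuinely $\sum_{\text{2-connected }G,\ {\rm e}(G)=2m,\ {\rm v}(G)={\rm v}(H_0)} t_G(W_0) + (\text{larger but controllably small terms})$, so it may be necessary to choose $\varepsilon$ and $W_0$ (e.g.\ replacing $W_0$ by $\lambda W_0$ with $\lambda$ small, or perturbing around the constant $0$) so that $t_G(W_0)$ for the larger subgraphs is of strictly higher order in a second parameter. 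This two-parameter bookkeeping — one parameter $\varepsilon$ for the vertex-count/pendant-killing hierarchy and one for isolating the $2m$-edge level and exploiting non-positivity — is where the real work lies; everything else is a routine expansion paralleling the proof of \cref{th:root}.
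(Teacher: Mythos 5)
Your overall direction --- tensor a non‑positivity witness against a mean‑zero block so that pendant subgraphs vanish while the $2m$‑edge $2$‑connected ones come out negative --- is a genuinely different route from the paper's, which instead constructs \emph{zero‑averaging} witnesses of non‑positivity directly (\cref{th:zero}, via high‑girth bipartite graphs, line $d$‑graphs and a blow‑up, then \cref{th:zero_r} via Levi graphs for odd $r$) and only then runs a tensor‑product induction (\cref{th:common_f}, \cref{th:common_f_r}) to handle several subgraphs at once. Your route could in principle be shorter, but as written it has two concrete gaps.

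First, you fix a single $H_0$ with a single witness $W_0$ satisfying $t_{H_0}(W_0)<0$, and then claim that every $2m$‑edge $2$‑connected $G\subseteq H$ contributes ``a non-negative multiple of some $t_G(W_0)\le 0$''. That inequality is not given: non‑positivity of $G$ only says \emph{some} signed graphon makes $t_G$ negative, not that your chosen $W_0$ does, and if $t_G(W_0)>0$ for some other $2m$‑edge $2$‑connected $G$ the leading coefficient could have either sign. You need one function that is simultaneously negative on all the $2m$‑edge $2$‑connected non‑positive subgraphs $G_1,\dots,G_k$; this is exactly what the paper's \cref{th:single_f} (proved by induction using \cref{th:disjoint} and tensor powers) supplies, and it is not ``routine'' --- it is the heart of the argument.

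Second, the ``main obstacle'' you flag is real but you do not resolve it: expanding in $\varepsilon$ alone puts $\varepsilon^{v(G)}$ on each surviving term, so a denser subgraph with $e(G)>2m$ but fewer covered vertices can dominate. The fix is to decouple the parameters. Take $W_0$ from \cref{th:single_f} with $t_{G_i}(W_0)<0$ for all $i$, set $f:=W_0\otimes g_\varepsilon$ with $\varepsilon$ fixed but small enough that $t_{G_i}(g_\varepsilon)>0$ (this uses $\delta(G_i)\ge 2$ and that $\sum_v d(v)=r\cdot 2m$ is even, so $t_{G_i}(g_\varepsilon)=\varepsilon^{v(G_i)}(1+o(1))>0$), observe that $f$ is zero‑averaging because $g_\varepsilon$ is, and only then introduce the \emph{outer} scale: expand $\sum_{G\subseteq H,\,e(G)\text{ even}}t_G(\delta f)=\sum_G\delta^{e(G)}t_G(f)$ in powers of $\delta$. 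Now the exponent is $e(G)$, pendant subgraphs vanish, minimality of $m$ kills everything below $\delta^{2m}$, the $\delta^{2m}$ coefficient is $\sum_i t_{G_i}(f)<0$, and the rest is $O(\delta^{2m+2})$. With these two repairs your proposal becomes a correct proof, bypassing the paper's \cref{th:zero}; without them it does not close.
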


When $r \geq 3$, examples coming from \cref{th:even} are quite plentiful. 
To see this, we will make use of the following proposition
from~\cite{Sid22}. Here the \emph{Levi graph} of an $r$-graph $H$ 
is the bipartite graph $L(H)$ with vertex set $V(H) \cup E(H)$ 
where $v \in V(H)$ and $e \in E(H)$ are adjacent if and only if 
$v \in e$ in $H$. Note that for $r \ge 3$ the $r$-graph $H$ is $2$-connected if and only if its
Levi graph $L(H)$ is $2$-connected in the usual sense.

\begin{proposition}[\cite{Sid22}, Theorem~1.2]\label{th:Levi}
If an $r$-graph $H$ is positive, 
then its Levi graph $L(H)$ is positive. 
When $r$ is odd, 
$L(H)$ is positive if and only if $H$ is positive. 
\end{proposition}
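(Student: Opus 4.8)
The statement splits into two implications. The first, that positivity of $H$ forces positivity of its Levi graph $L(H)$, I would prove by a direct Fubini manipulation; the second, the converse for odd $r$, is where the real work lies and where the hypothesis ``$r$ odd'' is used essentially, via a symmetric--tensor decomposition. So the plan is: settle the easy direction cleanly, then reduce the converse to a statement about symmetric tensors.

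For the first implication, I would take any symmetric measurable $U\colon[0,1]^2\to[-1,1]$ and set
\[
  W_U(x_1,\dots,x_r):=\int_0^1 U(x_1,y)\,U(x_2,y)\cdots U(x_r,y)\,dy,
\]
which is symmetric in $x_1,\dots,x_r$ and satisfies $|W_U|\le 1$, hence is an admissible test function for the positivity of $H$. In the expression for $t_{L(H)}(U)$, the variable attached to an edge-vertex $e\in E(H)$ occurs only in the $r$ factors $U(x_v,y_e)$ with $v\in e$; integrating out all such variables turns the block for $e$ into $W_U\big((x_v)_{v\in e}\big)$, and what is left is precisely $t_H(W_U)$. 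Thus $t_{L(H)}(U)=t_H(W_U)\ge 0$. Note this half does not use the oddness of $r$.

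For the converse with $r$ odd, I would argue by contradiction. If $H$ is not positive then, by density of step functions and continuity of $t_H$ in the cut metric, there is a finite set $\Omega$ and a symmetric $W\colon\Omega^r\to\mathbb{R}$ with $t_H(W)<0$; by homogeneity of $t_H$ we are free to rescale. Regard $W$ as a symmetric tensor of order $r$ over $\mathbb{R}^{\Omega}$. Symmetric rank-one tensors span the space of symmetric tensors, so $W=\sum_{j=1}^{N}c_j\,v_j^{\otimes r}$ with $c_j\in\mathbb{R}$ and $v_j\in\mathbb{R}^{\Omega}$. This is the point at which $r$ odd is crucial: every real number has a real $r$-th root, so $c_j v_j^{\otimes r}=(c_j^{1/r}v_j)^{\otimes r}$, and after absorbing the $c_j$ we may write
\[
  W(a_1,\dots,a_r)=\sum_{j=1}^{N}v_j(a_1)\,v_j(a_2)\cdots v_j(a_r).
\]
Letting $Z=\{1,\dots,N\}$ carry the uniform measure $\nu$ and putting $U(a,j):=N^{1/r}v_j(a)$ gives $\int_Z U(a_1,z)\cdots U(a_r,z)\,d\nu(z)=W(a_1,\dots,a_r)$, i.e.\ $W=W_U$ in the notation above. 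Realising $U$ as a symmetric step-function $2$-graphon whose ground set is split into two blocks $\Omega$ and $Z$, with $U$ supported on the cross-blocks and zero on $\Omega^2$ and on $Z^2$ (rescaled so that $\|U\|_\infty\le 1$, which changes $W$ only by a positive constant), I would observe that since $L(H)$ is bipartite with parts $V(H)$ and $E(H)$, only homomorphisms sending one part into $\Omega$ and the other into $Z$ contribute to $t_{L(H)}(U)$; the two possibilities contribute equal amounts by the symmetry of $U$, and each is a positive multiple of the bipartite density, which by the Fubini identity of the first paragraph equals $t_H(W_U)$. Hence $0\le t_{L(H)}(U)=(\text{positive constant})\cdot t_H(W)<0$, the desired contradiction. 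If $H$ is disconnected the same computation applies component by component, since both $t_H$ and $t_{L(H)}$ are multiplicative over connected components.

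The routine part is the first implication. The main obstacle is the converse, and within it the two linked facts that symmetric rank-one tensors span all symmetric tensors and that odd $r$ lets the scalar coefficients be absorbed into the $v_j$, so that an arbitrary symmetric $W$ becomes a genuine ``Levi profile'' $W_U$ of a \emph{real} symmetric kernel. For even $r$ this breaks down (e.g.\ $W\equiv-1$ is not of the form $W_U$, since $W_U(x,\dots,x)=\int U(x,y)^r\,dy\ge 0$), consistent with the fact that a single edge $K_r$ is not positive whereas its star Levi graph $K_{1,r}$ is positive when $r$ is even. A secondary, purely bookkeeping, difficulty is presenting the constructed $U$ as an honest symmetric $2$-graphon and checking that $t_{L(H)}(U)$ is indeed a positive multiple of $t_H(W_U)$.
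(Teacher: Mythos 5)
The paper does not prove this proposition --- it is cited directly as Theorem~1.2 of~\cite{Sid22}, so there is no in-text argument to compare against.

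Your argument is correct. The forward direction rests on the Fubini identity $t_{L(H)}(U)=t_H(W_U)$, where
\[
  W_U(x_1,\dots,x_r)=\int_0^1 U(x_1,y)\cdots U(x_r,y)\,dy,
\]
together with the observation that $\lvert U\rvert\le 1$ forces $\lvert W_U\rvert\le 1$, so $W_U$ is an admissible test function; this is clean and does not use the parity of $r$. The converse for odd $r$ is done by contraposition: approximate a witness $W$ with $t_H(W)<0$ by a finite-valued symmetric kernel (your appeal to cut-norm continuity can be replaced, if one wants to avoid discussing cut norms for naive $r$-kernels, by the cruder $L^1$ telescoping bound, which suffices since step functions are $L^1$-dense); decompose $W$ as a sum of symmetric rank-one tensors $\sum_j c_j v_j^{\otimes r}$, which is the classical polarization/Waring fact; and use odd $r$ to absorb the scalars $c_j$ via real $r$-th roots, giving $W=W_U$ for a genuine real symmetric step kernel $U$ supported on the two cross-blocks of a bipartition. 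The bookkeeping that $t_{L(H)}(U)$ is a \emph{positive} multiple of $t_H(W)$ then goes through: for connected $H$ only the two same-side assignments contribute and each gives a positive constant times $t_H(W)$, and for disconnected $H$ the identity $t_H(W_U)=\prod_i t_{H_i}(W_U)$ yields the same conclusion, so your ``component by component'' remark, while terse, is accurate. Your sanity check (the single edge $K_r$ with Levi graph $K_{1,r}$, which is positive iff $r$ is even) is exactly the example showing the hypothesis that $r$ is odd cannot be dropped. This appears to be the natural proof of the cited result.
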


\begin{example}
Consider the half-octahedron $G$, the $3$-graph 
with vertices $1,2,3,4,5,6$ and edges 
$\{1,3,5\}$, $\{1,4,6\}$, $\{2,3,6\}$, $\{2,4,5\}$. 
Its Levi graph $L(G)$ has $10$ vertices and $12$ edges. 
With a single exception, all positive graphs with at most $10$ vertices 
are classified in \cite{ACHLL12} 
and $L(G)$ is not one of them. 
Hence, by \cref{th:Levi}, $G$ is non-positive. 
Therefore, if all $4$-edge subgraphs of a $3$-graph $H$ 
are either isomorphic to $G$ or have a vertex of degree $1$,
then, by \cref{th:even}, $H$ is non-common and non-Sidorenko. In particular, by Theorem~\ref{thm:extremal_number}, the extremal number of the half-octahedron (or the Pasch configuration, as it sometimes known) is at least $\gamma n^{2 + c}$ for some $c, \gamma > 0$, improving on the bound of $\gamma n^2$ which follows from the deletion method or by taking all $\binom{n-1}{2}$ edges containing a given vertex. 
\end{example}

\begin{example} \label{exm:grid}
Recall that $G_r$ is the \emph{grid} $r$-graph 
whose vertices are the points of the $r \times r$ grid 
and whose edges are the $2r$ horizontal and vertical lines of the grid. 
It was shown in \cite[Proposition 1.5]{Sid22} that $G_r$ is not positive for odd $r$. 
Since any proper subgraph of $G_r$ has a vertex of degree $1$, 
\cref{th:even} implies that $G_r$ is non-common for odd $r$. 
Moreover, if we add more edges to $G_r$ without creating new subgraphs 
whose minimum vertex degree is at least $2$, 
then the resulting $r$-graph will remain non-common. 
\end{example}

The next statement was proved in \cite[Lemma~6]{ACHLL12} for $r=2$, 
but the proof can be repeated verbatim for an arbitrary $r$. 

\begin{lemma}[\cite{ACHLL12}]\label{th:disjoint}
An $r$-graph $G$ is positive if and only if every connected $r$-graph 
that occurs among the connected components of $G$ an odd number of times 
is positive. 
\end{lemma}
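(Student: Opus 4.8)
The plan is to run everything through the identity $t_{G_1\sqcup G_2}(W)=t_{G_1}(W)\,t_{G_2}(W)$, valid for all $r$-graphons $W$, which gives $t_G(W)=\prod_{l=1}^{s}t_{C_l}(W)^{m_l}$ when $C_1,\dots,C_s$ are the distinct connected components of $G$ with multiplicities $m_1,\dots,m_s$. The ``if'' direction is then immediate: if every $C_l$ with $m_l$ odd is positive, each factor with $m_l$ even is a perfect square and each factor with $m_l$ odd equals $t_{C_l}(W)^{m_l}$ with $t_{C_l}(W)\ge0$, so $t_G(W)\ge0$. For the converse, I would first reduce to a disjoint union of distinct connected graphs. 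Set $Q(W):=\prod_{l}t_{C_l}(W)^{\lfloor m_l/2\rfloor}$ and $G_0:=\bigsqcup_{l:\,m_l\ \mathrm{odd}}C_l$, so that $t_G(W)=Q(W)^2\,t_{G_0}(W)$. It suffices to test positivity on step-function $r$-graphons valued in $[-1,1]$, on which each $t_F$ is a polynomial in the finitely many array entries; since $Q$ is such a polynomial with $Q(\mathbf{1})=1$, the set $\{Q\ne0\}$ is open and dense, so $t_G\ge0$ everywhere forces $t_{G_0}\ge0$ on a dense set and hence, by continuity, everywhere. Thus $G$ is positive if and only if $G_0$ is.

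It remains to show that a disjoint union $\bigsqcup_{i=1}^{p}E_i$ of \emph{distinct} connected $r$-graphs is positive if and only if each $E_i$ is, and again only the converse needs work. Split the indices into those for which $E_i$ is positive and those for which it is not, and write $\bigsqcup_i E_i=A\sqcup B$ with $B$ the union of the positive $E_i$. As above, $t_B=\prod_{i:\,E_i\ \mathrm{positive}}t_{E_i}$ is nonnegative and nonzero on a dense open set, so positivity of $A\sqcup B$ forces $t_A\ge0$, i.e.\ $A$ is positive. Hence, if some $E_i$ is not positive, we are reduced to the case where \emph{every} $E_i$ is non-positive while $\bigsqcup_{i=1}^{p}E_i$ is positive, and I must rule this out.

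This is the heart of the matter. Choose a step-function parameter $m$ large enough that the polynomial $P_1:=t_{E_1}$ takes a negative value on $[-1,1]^{N}$, where $N$ is the number of symmetric array entries; this is possible because $E_1$ is non-positive and step graphons are dense. Pick a generic point $x_0$ of the cube with $P_1(x_0)<0$ and consider the line $\ell(s)=(1-s)x_0+s\mathbf{1}$, along which $p_i(s):=t_{E_i}(\ell(s))$ is a univariate polynomial with $p_i(1)=1$ and $p_1(0)<0$. Then $p_1$ has a first sign change at some $s^{*}\in(0,1)$; for a generic choice of $x_0$ this zero is simple and $\ell(s^{*})$ lies off $\{t_{E_j}=0\}$ for every $j\ne1$, \emph{provided} $\{t_{E_1}=0\}$ and $\{t_{E_j}=0\}$ share no irreducible component. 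Granting that, $p_1$ changes sign at $s^{*}$ while $\prod_{j\ne1}p_j$ does not, so $\prod_{i}p_i=t_{\bigsqcup_i E_i}(\ell(\cdot))$ changes sign there, contradicting $\prod_i t_{E_i}\ge0$. The missing ingredient — and the step I expect to be the main obstacle — is the claim that $t_{E_1}$ and $t_{E_j}$ have no common irreducible factor for distinct connected $E_1\ne E_j$. I would deduce this from the statement (which itself needs proof) that $t_H$ is an irreducible polynomial on $m$-step $r$-graphons for every connected $H$ and all sufficiently large $m$, together with the observation that $t_{E_1}\ne c\,t_{E_j}$ since homomorphism densities determine the underlying graph. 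The surrounding real-algebraic bookkeeping — making ``generic'' precise, passing between real hypersurface components and irreducible factors, and upgrading a sign change at a generic point to a common component of two zero sets — is routine but needs some care over $\mathbb{R}$.
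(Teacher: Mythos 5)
The paper does not actually prove this lemma: it cites \cite{ACHLL12}, Lemma~6, for $r=2$ and asserts the same argument works verbatim for general $r$, so there is no in-paper proof to compare against. Your two reductions are correct. Writing $t_G = Q^2\,t_{G_0}$ with $Q=\prod_l t_{C_l}^{\lfloor m_l/2\rfloor}$ and using that $\{Q\neq 0\}$ is dense on step-function parameters (plus continuity of homomorphism densities) to pass from $G$ to $G_0$ is fine, and so is the further reduction that peels off the provably-positive components of $G_0$. The trouble is precisely the step you flag as ``the heart of the matter.'' Your argument rests on the unproved claim that for distinct connected $r$-graphs $E_1\neq E_j$ the polynomials $t_{E_1}$ and $t_{E_j}$ on $m$-step signed $r$-graphons share no irreducible factor, which you in turn propose to deduce from the (also unproved, and stronger) assertion that $t_H$ is an irreducible polynomial whenever $H$ is connected. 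This is a genuine gap, not bookkeeping. Irreducibility of $t_H$ as a homogeneous polynomial of degree $e(H)$ in $\binom{m+r-1}{r}$-many variables is a substantial algebraic statement with no obvious proof; it does not follow from the known fact that $H\mapsto t_H$ is injective (that rules out $t_{E_1}$ being a scalar multiple of $t_{E_j}$, not the two sharing a factor). As written, the proposal trades a graph-theoretic lemma for a harder algebraic one and then leaves it open.

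Even granting coprimality, the genericity bookkeeping is thinner than ``routine but needs some care'' suggests: you need simultaneously that the first zero $s^*$ of $p_1$ in $(0,1)$ is simple (hence a genuine sign change), that the line $\ell$ is transverse to $\{t_{E_1}=0\}$ there, and that $\ell(s^*)$ avoids the real loci $\{t_{E_j}=0\}$ for every $j\neq 1$; since everything must happen over $\mathbb{R}$ inside the cube, this requires an explicit Sard-type or dimension-count argument rather than a wave at genericity. None of this is fatal once coprimality is in hand, but it is not filled in. I would strongly recommend consulting the actual proof of Lemma~6 in \cite{ACHLL12} rather than trying to force this irreducibility route through; the conclusion should not require any statement about prime factorisations of homomorphism-density polynomials.
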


We also note the following result. 
In the proof, we often consider the \emph{tensor product} $f\otimes g$ of $r$-variable symmetric functions $f$ and $g$, defined by
\begin{align*}
    (f\otimes g)((x_1,y_1),\dots,(x_r,y_r))  = f(x_1,\dots,x_r) \, g(y_1,\dots,y_r),
\end{align*}
where we identify each $((x_1,y_1),\dots,(x_r,y_r))$ with a point in $[0,1]^r$ through a measure-preserving bijection from $[0,1]^{2r}$ to $[0,1]^r$.

\begin{proposition}\label{th:single_f}
If the $r$-graphs $G_1,\dots,G_k$ are not positive, 
then there exists a function $f$ 
such that $t_{G_i}(f) < 0$ for all $i=1,\dots,k$. 
\end{proposition}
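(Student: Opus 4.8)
The plan is to argue by induction on $k$, using tensor products of functions as the basic combining operation, together with the homogeneity of the functionals $t_{G_i}$ (so that each witness function may be freely rescaled without changing the sign of any $t_{G_i}$) and a perturbation step to rule out accidental vanishing. The base case $k=1$ is exactly the hypothesis that $G_1$ is non-positive. Before the induction I would record two elementary facts: first, for any fixed symmetric $f$ the map $\varepsilon \mapsto t_{G}(f+\varepsilon p)$ is a polynomial of degree $\le {\rm e}(G)$ in $\varepsilon$ whose top coefficient is $t_G(p)$, so if $G$ is non-positive this polynomial is non-constant for a suitable symmetric $p$; hence, by moving $f$ slightly, any single density $t_G(f)$ can be made nonzero while all the strict inequalities $t_{G_i}(f)<0$ already in force are preserved (since each $t_{G_i}$ is continuous in $L^\infty$). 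Second, $t_{G}(f\otimes h)=t_G(f)\,t_G(h)$ for every $G$, so a tensor square $h^{\otimes 2}$ contributes a strictly positive factor $t_G(h)^2$ to every density $t_G$ whose value is nonzero.

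For the inductive step, suppose $g$ satisfies $t_{G_i}(g)<0$ for $i\le k$ and $h$ satisfies $t_{G_{k+1}}(h)<0$. Perturbing $g$, I may assume $t_{G_{k+1}}(g)\ne 0$; if $t_{G_{k+1}}(g)<0$ we are done with $f=g$, so assume $t_{G_{k+1}}(g)>0$. Perturbing $h$, I may assume $t_{G_i}(h)\ne 0$ for all $i\le k$; if $t_{G_i}(h)<0$ for all $i\le k$ we are done with $f=h$. Otherwise split $\{1,\dots,k\}$ into $S^{+}=\{i:t_{G_i}(h)>0\}$ and $S^{-}=\{i:t_{G_i}(h)<0\}$. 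Since $t_{G_i}(g\otimes h)=t_{G_i}(g)\,t_{G_i}(h)$, the function $g\otimes h$ is negative on $G_{k+1}$ and on every $G_i$ with $i\in S^{+}$, so when $S^{-}=\emptyset$ the function $g\otimes h$ is the one we want; more generally, combining $g$, $h$ and tensor squares one can realize each sign pattern $i\mapsto \operatorname{sign}(t_{G_i}(g))\cdot\operatorname{sign}(t_{G_i}(h'))$ with $h'\in\{h,h^{\otimes 2},\dots\}$, which settles every case in which the ``cross signs'' cooperate.

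The step I expect to be the real obstacle is the residual case $S^{-}\ne\emptyset$: here $h$ is anti-correlated with some of $G_1,\dots,G_k$, and tensoring alone cannot cure this, since multiplying a negative density by a positive factor leaves it negative and multiplying by a negative factor flips the signs of all the other coordinates at once. To handle it I would first reduce to connected patterns: by \cref{th:disjoint} a non-positive $r$-graph has a connected component occurring with odd multiplicity that is itself non-positive, and controlling the signs of $t_{D}$ for the connected pieces $D$ controls the signs of the $t_{G_i}$, provided one is careful about the contribution of the remaining components. For connected patterns one has a second, \emph{additive} gadget: placing rescaled copies of two functions on disjoint sub-blocks of $[0,1]$ (and the value $0$ on tuples that meet both blocks) makes $t_D$ of the combined function equal to a positive combination of $t_D$ of the two pieces, with coefficients that can be pushed apart by rescaling. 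Using this additive gadget on the coordinates in $S^{-}$, the tensor gadget on the coordinates in $S^{+}\cup\{k+1\}$, and the perturbation step to keep all the relevant densities nonzero, one assembles a single function negative on all of $G_1,\dots,G_{k+1}$, closing the induction; the bookkeeping needed to make the two gadgets compatible — in particular choosing the rescalings so that in each density the intended witness dominates — is the delicate part of the argument.
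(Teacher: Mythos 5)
Your setup (induction on $k$, multiplicativity of $t_G$ over tensor products, a small perturbation to avoid accidental zeros) is sound, and the easy branches of your case analysis are correct. But the step you yourself flag as the obstacle --- the residual case $S^-\neq\emptyset$ --- is a genuine gap, not merely bookkeeping. Tensoring with squares cannot help: $t_{G_i}(g\otimes h^{\otimes 2})=t_{G_i}(g)\,t_{G_i}(h)^2$ has the same sign as $t_{G_i}(g)$, so no new sign pattern arises. And the ``additive block'' gadget you sketch gives, for a connected pattern $D$ and blocks of measure $\alpha$, $1-\alpha$, the density $\alpha^{{\rm v}(D)}t_D(f)+(1-\alpha)^{{\rm v}(D)}t_D(g)$ (and, for disconnected $G_i$, a product of such expressions over components). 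This is a \emph{positive} combination, so it cannot produce a negative value on any $G_i$ where both inputs are already nonnegative; choosing one set of scalings, blocks and tensor factors that simultaneously flips the sign on every $i\in S^-$ while preserving it on $S^+\cup\{k+1\}$ is precisely the combinatorial problem your outline does not solve, and it is not clear it can be solved along these lines.

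The paper avoids the residual case entirely by using the induction hypothesis more symmetrically: for each $j\in[k]$ it produces a function $f_j$ with $t_{G_i}(f_j)<0$ for all $i\neq j$, perturbs so that $t_{G_j}(f_j)\neq 0$, and reduces to the case $t_{G_j}(f_j)>0$. Then $f_1\otimes\cdots\otimes f_k$ has $t_{G_i}$ of sign $(-1)^{k-1}$ for every $i$, which is already negative when $k$ is even. When $k$ is odd, the paper invokes \cref{th:disjoint} to find $f_0$ with $t_{G_1\sqcup\cdots\sqcup G_k}(f_0)<0$, hence an odd number of the $t_{G_i}(f_0)$ are negative, and tensors $f_0$ with the even-sized subfamily of $f_j$'s indexed by the positive coordinates. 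This ``leave-one-out for every $j$'' plus a parity argument is the idea you are missing; I would restructure your inductive step around the full family $\{f_j\}_{j=1}^k$ rather than a single pair $(g,h)$.
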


\begin{proof}[\bf{Proof}]
We use induction on $k$. 
The base case $k=1$ is trivial, so we consider the induction step going from $k-1$ to $k \geq 2$. 
For each $j=1,\dots,k$, by the induction hypothesis, 
there exists a function $f_j$ such that 
$t_{G_i}(f_j) < 0$ for all $i \neq j$.
If necessary, we perturb $f_j$ by a little bit to ensure that $t_{G_j}(f_j) \neq 0$ 
while preserving $t_{G_i}(f_j) < 0$ for all $i \neq j$.
If $t_{G_j}(f_j) < 0$, then $f_j$ is the function we need. 
Thus, we may assume that $t_{G_j}(f_j) > 0$. 
If $k$ is even, 
$f=f_1 \otimes \cdots \otimes f_k$ 
satisfies $t_{G_i}(f) < 0$ for all $i=1,\dots,k$. 
Suppose then that $k$ is odd. 
Let $G$ be the union of disjoint copies of 
$G_1,\dots,G_k$. 
By \cref{th:disjoint}, $G$ is not positive, 
so there exists a function $f_0$ such that $t_G(f_0) < 0$. 
Notice that $t_G(f_0) = \prod_{i=1}^k t_{G_i}(f_0)$. 
We may assume that $t_{G_i}(f_0) > 0$ for $1 \leq i \leq m$ 
and $t_{G_i}(f_0) < 0$ for $m+1 \leq i \leq k$, 
where $m$ is even. 
If $m=0$, then $f_0$ is the function we need. 
If $m>0$, 
then $f=f_0\otimes f_1 \otimes \cdots \otimes f_m$ 
satisfies $t_{G_i}(f) < 0$ for all $i=1,\dots,k$, so we again have the required function. 
\end{proof}

The following result and its corollaries are the key to proving~\cref{th:even}. 
Note that we call an $r$-variate symmetric measurable function $f$ 
\emph{zero-averaging} if 
$\int f(x_1,\dots,x_{r-1},x_r) \, dx_r = 0$ 
for any $x_1,\dots,x_{r-1}$. 

\begin{proposition}\label{th:zero}
If $H$ is a non-positive $2$-connected graph with an even number of edges, then 
there exists a zero-averaging function $f:[0,1]^2\rightarrow [-1,1]$ 
such that $t_H(f)<0$. 
\end{proposition}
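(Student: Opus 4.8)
The statement says that a non-positive $2$-connected $r$-graph $H$ with $2m$ edges (here $r=2$, since the proposition is about graphs $f:[0,1]^2\to[-1,1]$) admits a \emph{zero-averaging} witness $f$ with $t_H(f)<0$. I start from the mere fact that $H$ is non-positive, so there is \emph{some} symmetric $f_0:[0,1]^2\to[-1,1]$ with $t_H(f_0)<0$; the whole game is to upgrade $f_0$ to one whose one-variable averages all vanish, without destroying the negativity. The natural idea is a "centering" or "tensoring with a mean-zero perturbation" trick: write $f = f_0 \otimes \phi$ (using the tensor product of symmetric functions defined just above the statement) where $\phi:[0,1]^2\to[-1,1]$ is a fixed symmetric function with $\int \phi(x,y)\,dy = 0$ for all $x$, for instance $\phi(x,y)=\mathrm{sign}(\sin 2\pi x)\cdot\mathrm{sign}(\sin 2\pi y)$ or any rank-one $\psi(x)\psi(y)$ with $\int\psi=0$. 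Then $t_H(f_0\otimes\phi) = t_H(f_0)\,t_H(\phi)$, and $t_H(\phi)=\prod_{\text{edges}}$ contributions which, for $\phi=\psi(x)\psi(y)$, equals $\prod_{v}\big(\int \psi^{\deg v}\big)$ — this is $0$ whenever some degree is odd, which is exactly the kind of collapse that a zero-averaging function forces. So a pure rank-one $\phi$ kills $t_H$ unless $H$ is even at every vertex, which is not assumed. I therefore need $\phi$ that is zero-averaging but with $t_H(\phi)\neq 0$; a small perturbation $\phi = \psi(x)\psi(y) + \delta\,\chi$ of a rank-one kernel, or a carefully chosen two-step function, should do this, and then $f_0\otimes\phi$ is zero-averaging (tensoring with a zero-averaging function preserves the property) and has $t_H(f_0\otimes\phi)$ of the same sign as $-t_H(\phi)$ after rescaling $f_0$ by a sign.

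The cleaner route, and the one I would actually pursue, uses $2$-connectedness more seriously. Since $H$ is $2$-connected it has minimum degree at least $2$, so no vertex has degree $1$; I would like to say that the "correction terms" that appear when I project $f_0$ onto the zero-averaging subspace all vanish or can be controlled. Concretely, decompose $f_0 = f_0^{\perp} + g$ where $f_0^{\perp}(x,y) := f_0(x,y) - \int f_0(x,z)\,dz - \int f_0(z,y)\,dz + \int\!\!\int f_0$ is the zero-averaging part and $g$ collects the lower-order (degenerate) pieces. Expanding $t_H(f_0)$ into $2^{{\rm e}(H)}$ terms, each term is a product over edges of one of $\{f_0^\perp, a(x), a(y), b\}$ where $a$ is a one-variable average and $b$ a constant; a term survives integration only if the graph formed by the edges labelled $f_0^\perp$ has all its "open ends" closed up appropriately, and — this is where $2$-connectedness enters — a single degenerate edge in a $2$-connected $H$ forces a whole chunk to degenerate. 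The plan is to show that after suitable normalization the dominant contribution is $t_H(f_0^\perp)$ plus strictly lower-order (in a scaling parameter) terms, so scaling $f_0$ towards the constant function by $f_0^{(t)} = t f_0 + (1-t)c$ and differentiating in $t$, or equivalently replacing $f_0$ by $\varepsilon f_0^\perp + c$ and letting $\varepsilon\to 0$, isolates the sign of $t_H(f_0^\perp)$. If $t_H(f_0^\perp)<0$ we are done with $f=f_0^\perp$; the remaining case is $t_H(f_0^\perp)\ge 0$, which I would handle by tensoring $f_0^\perp$ with itself an appropriate number of times or with the rank-one zero-averaging kernel above to flip and preserve the sign, exactly as in the proof of \cref{th:single_f}.

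\textbf{Main obstacle.} The delicate point is verifying that the zero-averaging projection $f_0 \mapsto f_0^\perp$ cannot turn a negative $t_H$ into a nonnegative one for a $2$-connected even $H$ — equivalently, that the degenerate terms in the expansion of $t_H(f_0)$ do not conspire to be the only source of negativity. I expect the right way to make this rigorous is the scaling argument: set $f_\varepsilon = c + \varepsilon f_0^\perp$ with $c$ chosen so $f_\varepsilon$ stays in $[-1,1]$, note $t_H(f_\varepsilon) = \sum_{G\subseteq H} c^{{\rm e}(H)-{\rm e}(G)} \varepsilon^{{\rm e}(G)} t_G(f_0^\perp)$, and observe that since $f_0^\perp$ is zero-averaging, $t_G(f_0^\perp)=0$ whenever $G$ has a degree-one vertex; hence the lowest-order surviving term in $\varepsilon$ comes from a nonempty $G\subseteq H$ with $\delta(G)\ge 2$, and for such $G$ one wants to relate $t_G(f_0^\perp)$ back to the non-positivity of $H$. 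Here $2$-connectedness should guarantee that the minimal such $G$ is $H$ itself — or at least that we may choose the witness $f_0$ so that $H$ is the relevant minimal subgraph — closing the argument; pinning down this minimality claim, and dealing with the sign in the residual case, is the part that needs the most care.
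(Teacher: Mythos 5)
Both routes you sketch leave the crux unresolved, and you say so yourself, so this is a genuine gap rather than a complete proof by another method. Your first route (take $f = f_0 \otimes \phi$ with $\phi$ zero-averaging) correctly reduces the statement to: there exists a zero-averaging $\phi$ with $t_H(\phi)\neq 0$ (then use $\phi$ itself if $t_H(\phi)<0$, or $f_0\otimes\phi$ if $t_H(\phi)>0$). But you do not prove this existence, and it is not obviously any easier than the proposition itself — indeed, producing a zero-averaging test function that $H$ ``sees'' is precisely where all the work lies. Your second route has a more basic problem: the function $f_\varepsilon = c + \varepsilon f_0^\perp$ you propose is not zero-averaging (its one-variable average is $c\neq 0$), so even if you could show $t_H(f_\varepsilon)<0$ you would not have the required witness. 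Moreover, the ``minimality'' you hope $2$-connectedness provides fails: a $2$-connected $H$ can have many proper subgraphs with minimum degree at least $2$ (e.g.\ $K_4$ contains triangles), so the lowest-order term in your $\varepsilon$-expansion is governed by those subgraphs and says nothing about $t_H$. That scaling-and-minimality mechanism is in fact what the paper uses later, in the proof of \cref{th:even}, where the hypotheses explicitly supply the minimality; it is not the right tool for \cref{th:zero}.

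The paper's actual proof is a combinatorial construction that you do not anticipate. Starting from a $\{\pm 1\}$-weighted graph $G$ on $d$ vertices with $t_H(G)<0$ (obtained by discretising a non-positivity witness), one takes a $d$-regular bipartite graph $F$ of girth larger than ${\rm v}(H)$, properly edge-colours it with $d$ colours, and forms the line $d$-graph $\mathcal{F}$ whose $d$-edges correspond to vertices of $F$. Each $d$-edge is replaced by a coloured copy of $G$, and its sign is $+1$ or $-1$ according to which side of the bipartition of $F$ the corresponding vertex lies on. Because every vertex of $\mathcal{F}$ lies in exactly one $d$-edge of each sign, the resulting $\{\pm1\}$-weighted graph $\Gamma$ is zero-averaging. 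The evenness of ${\rm e}(H)$ guarantees $w_H(-G)=w_H(G)$, so every ``good'' homomorphism (one landing inside a single $d$-edge) contributes negatively regardless of sign; $2$-connectedness together with the high girth of $F$ forces every ``bad'' homomorphism to be degenerate, hence lower order, so $t_H(\Gamma)<0$. This gadget construction — not a projection or a tensoring trick — is the idea your proposal is missing.
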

\begin{proof}
    Our plan is to construct a $\{\pm 1\}$-weighted graph $\Gamma$ such that every vertex has a vanishing sum over the weights of its incident edges and $t_H(\Gamma)<0$.
    Let $U:[0,1]^2\rightarrow [-1,1]$ be a measurable symmetric function, i.e., a signed graphon, that satisfies $t_H(U)<0$. 
    By using a signed variant of~\Cref{lem:discretise},  
    one may obtain a $\{\pm 1\}$-weighted graph $G$ such that $t_H(G)<0$.
    Let $d:={\rm v}(G)$ and $s:={\rm v}(H)$ for brevity and write $w_H(G) = d^s t_H(G)$. We may then assume that $w_H(G)<-d^{s-1/2}$ by replacing $G$ with a blow-up by a sufficiently large factor. Note that, as $H$ has an even number of edges, $w_H(-G)=w_H(G)$, where $-G$ denotes the graph with edge weights of opposite sign from~$G$. 

    For any sufficiently large even $n$, there is a $d$-regular $n$-vertex bipartite graph $F$ with girth greater than $s$ (see, for example,  \cite{Lazebnik:1995}). Partition the edges of $F$ into $d$ perfect matchings, colouring the edges of each matching with one of the $d$ colours from $[d]$. 
    Now consider the line $d$-graph 
    of $F$ whose vertices are the edges of $F$ and whose $d$-edges are the collections of $d$ edges in $F$ incident to each vertex of $F$. 
    A $\{\pm 1\}$-weighted $d$-graph $\mathcal{F}$ is defined by assigning $+1$ or $-1$ to each edge in this line $d$-graph, depending on which side of the bipartition the corresponding vertex of $F$ lies.
    The required zero-averaging weighted graph $\Gamma$ is then obtained by replacing each $d$-edge in $\mathcal{F}$ by a copy of $G$, where we map each vertex of $V(G)=[d]$ to the corresponding coloured vertex (with the colouring inherited from the matchings) and multiply the weight on each edge of $G$ by the $\{\pm 1\}$-weight on the corresponding edge of $\mathcal{F}$.

    We claim that $t_H(\Gamma)<0$. To prove this, we say that a (weighted) homomorphism from~$H$ to $\Gamma$ is \emph{good} if the homomorphic image of $H$ lies in one $d$-edge of $\mathcal{F}$. The weighted sum of good homomorphisms is a negative number less than $n\cdot w_H(G)< -n d^{s-1/2}$, where we used the fact that $w_H(G)=w_H(-G)$.
    
    On the other hand, there may be some homomorphic images of $H$ that are not entirely covered by a single $d$-edge, which we call \emph{bad}. 
    As the girth of $F$ is larger than the number of vertices in $H$, the unique minimal collection of $d$-edges whose union contains a fixed bad image of $H$ must form a \emph{$d$-hypertree} in $\mathcal{F}$, where uniqueness follows from the fact that $\mathcal{F}$ is a linear hypergraph. 

    As $\mathcal{F}$ is linear, deleting a vertex $v$ lying in the intersection of two $d$-edges in a $d$-hypertree disconnects the subgraph of $\Gamma$ induced on the vertex set of the $d$-hypertree. In particular, if a bad image of $H$ contains $v$, 
    then the image of $H$ is disconnected once $v$ is deleted. As $H$ is a 2-connected graph, the bad image of $H$ must therefore be degenerate, i.e., there are at least two vertices of $H$ that are mapped to $v$. 

    Suppose that there are $t+1$ edges in a $d$-hypertree $\mathcal{T} \subseteq \mathcal{F}$ for some $t\geq 1$. Then there are $(t+1)d-t$ vertices in $\mathcal{T}$, exactly $t$ of which have degree two (here we used that each vertex in $\mathcal{F}$ has degree exactly two). If $\mathcal{T}$ is a minimal cover of a bad homomorphic image of $H$, then there are $t$ disjoint pairs of vertices in $H$, each of which maps to a unique one of the $t$ vertices of degree two. 
    Thus, there are at most $\binom{s}{2}^t((t+1)d-t)^{s-2t}$ bad homomorphic copies of $H$ whose minimal cover is $\mathcal{T}$. 
    
    Given a $d$-hypertree $\mathcal{T}$ with $t+1$ edges, one can recover the tree in $F$ corresponding to the union of the $d$-edges of $\mathcal{T}$ in three steps:  replace each $d$-edge by the corresponding vertex in $F$; connect those vertices that correspond to intersecting $d$-edges; and turn each degree-one vertex in a $d$-edge $e$ of $\mathcal{T}$ into a leaf adjacent to the unique vertex of $F$ which corresponds to $e$.
    Note that the leaves added in the last step are determined once the first two steps give a $(t+1)$-vertex tree in $F$. Thus, there are at most $nd^{t}$ isomorphic images of $\mathcal{T}$ in $\mathcal{F}$.
    Therefore, there are at most $s^{2t}(t+1)^{s-2t}nd^{s-t}$ bad homomorphic images of $H$ whose minimal cover is isomorphic to $\mathcal{T}$. Hence, as $1\leq t<s$ and the number of distinct $d$-hypertrees with $t+1$ edges and maximum degree two is bounded as a function of $t$, the number of bad homomorphic images of $H$ is at most $Cnd^{s-1}$ for a constant $C=C(s)$. This is asymptotically smaller than $-nd^{s-1/2}$, the upper bound for the weighted sum of good homomorphisms provided $d$ is sufficiently large, so that $t_H(\Gamma)$ is negative, as required.
\end{proof}

\begin{corollary}\label{th:zero_r}
Let $r \geq 3$ be odd. If $H$ is a non-positive $2$-connected $r$-graph with an even number of edges, 
then there exists an $r$-variate zero-averaging function $h$ 
such that $t_H(h) < 0$. 
\end{corollary}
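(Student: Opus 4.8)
The plan is to reduce the statement to the graph case already established in \cref{th:zero} by passing to the Levi graph. Since $r$ is odd, \cref{th:Levi} gives that $L(H)$ is non-positive because $H$ is. As noted above, for $r \geq 3$ the $r$-graph $H$ is $2$-connected if and only if $L(H)$ is $2$-connected in the usual graph sense, so $L(H)$ is a $2$-connected bipartite graph. Finally, $L(H)$ has $\sum_{e \in E(H)} |e| = r\cdot {\rm e}(H)$ edges, which is even since $r$ is odd and ${\rm e}(H)$ is even. Hence $L(H)$ satisfies the hypotheses of \cref{th:zero}, and there is a zero-averaging function $f:[0,1]^2\rightarrow[-1,1]$ with $t_{L(H)}(f)<0$.

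Next I would lift $f$ to an $r$-variate function by setting
\[
  h(x_1,\dots,x_r) \;:=\; \int_0^1 \prod_{i=1}^r f(x_i,y)\,dy .
\]
This function is symmetric in $x_1,\dots,x_r$ and satisfies $|h|\le 1$ since $|f|\le 1$. It is zero-averaging because, for every $x_1,\dots,x_{r-1}$,
\[
  \int_0^1 h(x_1,\dots,x_{r-1},x_r)\,dx_r
  \;=\; \int_0^1 \Bigl(\prod_{i=1}^{r-1} f(x_i,y)\Bigr)\Bigl(\int_0^1 f(x_r,y)\,dx_r\Bigr)\,dy \;=\; 0,
\]
using that $\int_0^1 f(x_r,y)\,dx_r=0$ for each $y$ by the zero-averaging property of $f$.

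It then remains to identify $t_H(h)$ with $t_{L(H)}(f)$. Expanding the definition of $t_H(h)$ and using Fubini's theorem to move the auxiliary integral attached to each edge $e\in E(H)$ outside, one obtains
\[
  t_H(h)
  \;=\; \int \prod_{e\in E(H)}\Bigl(\int_0^1 \prod_{v\in e} f(x_v,y_e)\,dy_e\Bigr)\,d\mu^{{\rm v}(H)}
  \;=\; \int \prod_{e\in E(H)}\prod_{v\in e} f(x_v,y_e)\,d\mu^{{\rm v}(H)+{\rm e}(H)},
\]
where in the last expression there is one variable $x_v$ for each $v\in V(H)$ and one variable $y_e$ for each $e\in E(H)$. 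Since the vertex set of $L(H)$ is $V(H)\cup E(H)$ and its edges are exactly the incident pairs $\{v,e\}$ with $v\in e$, the right-hand side is precisely $t_{L(H)}(f)$. Therefore $t_H(h)=t_{L(H)}(f)<0$, so $h$ is the required function.

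Everything above is routine once \cref{th:zero} is available; the only points that need care are the Levi-graph bookkeeping in the last display and checking that the three hypotheses of \cref{th:zero} — non-positivity, $2$-connectivity, and an even number of edges — transfer correctly from $H$ to $L(H)$ (which is exactly where the oddness of $r$, via \cref{th:Levi} and the parity of $r\cdot{\rm e}(H)$, is used). I do not expect any substantial obstacle beyond this.
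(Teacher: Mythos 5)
Your proof is correct and follows exactly the same route as the paper: pass to the Levi graph via Proposition~\ref{th:Levi}, invoke Proposition~\ref{th:zero} for a $2$-variate zero-averaging $f$, and lift to $h(x_1,\dots,x_r)=\int\prod_{i=1}^r f(x_i,y)\,dy$. The only difference is that you spell out the verifications (parity of ${\rm e}(L(H))$, the $2$-connectivity transfer, the Fubini computation identifying $t_H(h)$ with $t_{L(H)}(f)$) that the paper leaves implicit, and these are all correct.
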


\begin{proof}[\bf{Proof}]
By \cref{th:Levi}, the Levi graph $L(H)$ of $H$ is a non-positive $2$-connected graph with an even number of edges. 
Therefore, by \cref{th:zero}, 
there exists a $2$-variate zero-averaging function $f$ 
such that $t_{L(H)}(f) < 0$. 
Consider the $r$-variate symmetric measurable function $h$ given by 
$h(x_1,\dots,x_r) = \int \prod_{i=1}^r f(x_i,y) \, dy$. 
It is easy to see that $h$ is zero-averaging and 
$t_H(h) = t_{L(H)}(f) < 0$. 
\end{proof}

\begin{proposition}\label{th:common_f}
If $G_1,\dots,G_k$ are non-positive $2$-connected graphs each with an even number of edges, 
then there exists a zero-averaging function $f$ 
such that $t_{G_i}(f) < 0$ for all $i=1,\dots,k$. 
\end{proposition}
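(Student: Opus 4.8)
The plan is to run the same induction on $k$ as in the proof of \cref{th:single_f}, but while keeping track of the extra constraint that all the functions involved be zero-averaging; the base case will be \cref{th:zero}, and \cref{th:disjoint} will supply a (generally non-zero-averaging) seed in the inductive step. Since $t_G$ depends only on the isomorphism type of $G$, we may first delete repetitions and assume that $G_1,\dots,G_k$ are pairwise non-isomorphic. The base case $k=1$ is exactly \cref{th:zero}. For the step from $k-1$ to $k\geq 2$, the induction hypothesis yields, for each $j$, a zero-averaging function $f_j$ with $t_{G_i}(f_j)<0$ for all $i\neq j$. A routine perturbation of $f_j$ within the (linear) space of zero-averaging functions, in the direction of the zero-averaging witness $u$ from \cref{th:zero} with $t_{G_j}(u)<0$ --- along which $\varepsilon\mapsto t_{G_j}(f_j+\varepsilon u)$ is a polynomial with nonzero leading coefficient and hence has only finitely many roots --- then lets us assume, after rescaling into $[-1,1]$, that $t_{G_j}(f_j)\neq 0$, while keeping $t_{G_i}(f_j)<0$ for $i\neq j$. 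If some $t_{G_j}(f_j)<0$, then that $f_j$ already proves the proposition, so we may assume $t_{G_j}(f_j)>0$ for every $j$.

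The new ingredient is the seed. Each $G_i$ is connected, non-positive, and, by the non-isomorphism reduction, occurs exactly once among the components of the disjoint union $G_1\sqcup\cdots\sqcup G_k$, so by \cref{th:disjoint} that disjoint union is not positive; hence there is a symmetric $W\colon[0,1]^2\to[-1,1]$ with $\prod_{i=1}^k t_{G_i}(W)=t_{G_1\sqcup\cdots\sqcup G_k}(W)<0$. In particular, no factor $t_{G_i}(W)$ vanishes, and $S:=\{i:t_{G_i}(W)<0\}$ has odd cardinality. The claim is then that
\[
    f := W\otimes\Bigl(\bigotimes_{i\in S}f_i\Bigr)
\]
is the desired function. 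It is $[-1,1]$-valued and, since $|S|\geq 1$, it has at least one zero-averaging tensor factor; as a tensor product having a zero-averaging factor is itself zero-averaging (under the coordinatewise identification $[0,1]\cong[0,1]^2$), $f$ is zero-averaging. For the signs, write $t_{G_i}(f)=t_{G_i}(W)\prod_{j\in S}t_{G_i}(f_j)$: if $i\in S$, then $t_{G_i}(W)$ is negative, $t_{G_i}(f_i)$ is positive, and the remaining $|S|-1$ factors are negative, for total sign $(-1)^{|S|}$; if $i\notin S$, then $t_{G_i}(W)$ is positive and all $|S|$ factors $t_{G_i}(f_j)$ are negative, again giving sign $(-1)^{|S|}$. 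Since $|S|$ is odd, $t_{G_i}(f)<0$ for all $i$, which is the required conclusion. Note that, pleasantly, this single construction handles every $k$ at once, with no separate case according to the parity of $k$.

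The one genuinely delicate point is the parity bookkeeping: the combination of \cref{th:disjoint} with the non-isomorphism reduction is exactly what forces $|S|$ to be odd, and tensoring $W$ with precisely the factors indexed by $S$ simultaneously turns the non-zero-averaging seed into a zero-averaging function and pins every sign to $-1$, the latter being possible precisely because $|S|$ is odd. I expect the perturbation step, and the elementary verification that a tensor product with a zero-averaging factor is zero-averaging, to be the most tedious parts, but neither poses a real obstacle.
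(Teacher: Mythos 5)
Your proof is correct, and it is in spirit the same as the paper's — induction on $k$ with \cref{th:zero} supplying zero-averaging witnesses and \cref{th:disjoint} supplying the seed — since the paper's own proof is the one-line instruction ``Using \cref{th:zero}, we proceed exactly as in the proof of \cref{th:single_f}.'' But your execution is a genuine refinement, and it in fact repairs a small gap that a literal reading of that instruction would leave. In the \cref{th:single_f} template, the odd-$k$ case with $m=0$ (all $t_{G_i}(f_0)<0$) outputs $f_0$ itself; that $f_0$ comes from \cref{th:disjoint} and has no reason to be zero-averaging, so ``proceed exactly'' does not literally produce a zero-averaging function there. Your two modifications fix this cleanly: first, passing to pairwise non-isomorphic $G_i$ (legitimate, since $t_G$ is an isomorphism invariant, and the $G_i$ are $2$-connected hence connected) lets you apply \cref{th:disjoint} without tracking the parity of $k$ — each type occurs exactly once, so the disjoint union is non-positive outright; second, tensoring the seed $W$ with precisely $\{f_i : i\in S\}$ for $S=\{i : t_{G_i}(W)<0\}$ forces an odd number of negative factors in every $t_{G_i}(f)$ and, since $|S|\ge 1$, always includes at least one zero-averaging factor. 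The sign bookkeeping you give is correct, the perturbation within the linear space of zero-averaging functions (in the direction of the \cref{th:zero} witness, whose leading coefficient $t_{G_j}(u)$ is nonzero) is valid, and the observation that a tensor product with a zero-averaging factor is zero-averaging is elementary and used correctly. In short: same strategy, but your single construction replaces the paper's even/odd $k$ and $m=0$/$m>0$ case analysis and closes the $m=0$ subcase the paper's terse pointer glosses over.
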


\begin{proof}[\bf{Proof}]
Using \cref{th:zero}, we proceed exactly as in the proof of \cref{th:single_f}. 
\end{proof}

\begin{proposition}\label{th:common_f_r}
Let $r \geq 3$ be odd. 
If $G_1,\dots,G_k$ 
are non-positive $2$-connected $r$-graphs each with an even number of edges, 
then there exists a zero-averaging function $f$ 
such that $t_{G_i}(f) < 0$ for all $i=1,\dots,k$. 
\end{proposition}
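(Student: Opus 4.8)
The plan is to deduce this from the graph case, \cref{th:common_f}, by passing to Levi graphs, in exact analogy with the way \cref{th:zero_r} was deduced from \cref{th:zero}. The point is that $t_{G}(h)=t_{L(G)}(f)$ whenever $h(x_1,\dots,x_r):=\int_0^1\prod_{j=1}^r f(x_j,y)\,dy$, so it suffices to produce a single $2$-variate zero-averaging $f$ that is simultaneously a witness for all of the Levi graphs $L(G_1),\dots,L(G_k)$.

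First I would check that $L(G_1),\dots,L(G_k)$ satisfy the hypotheses of \cref{th:common_f}. Since $r\ge 3$ and each $G_i$ is $2$-connected, the observation just before \cref{th:Levi} shows that $L(G_i)$ is $2$-connected; since $r$ is odd and $G_i$ is non-positive, \cref{th:Levi} shows that $L(G_i)$ is non-positive; and $L(G_i)$ has $\sum_{e\in E(G_i)}|e|=r\cdot{\rm e}(G_i)$ edges, which is even because $r$ is odd and ${\rm e}(G_i)$ is even. Hence \cref{th:common_f}, applied to $L(G_1),\dots,L(G_k)$, yields a zero-averaging $f:[0,1]^2\to[-1,1]$ with $t_{L(G_i)}(f)<0$ for all $i$. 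Setting $h(x_1,\dots,x_r):=\int_0^1\prod_{j=1}^r f(x_j,y)\,dy$ exactly as in \cref{th:zero_r}, one has that $h$ is a symmetric $[-1,1]$-valued function; it is zero-averaging because integrating out $x_r$ and interchanging the order of integration leaves a factor $\int_0^1 f(x_r,y)\,dx_r=0$; and integrating out the variable attached to each edge of $G_i$ first shows $t_{G_i}(h)=t_{L(G_i)}(f)<0$ for every $i$, so $h$ is the desired function.

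Since the two nontrivial ingredients, \cref{th:common_f} and (through it) \cref{th:zero}, are already in hand, there is essentially no obstacle here; the only thing needing care is the standard Levi-graph bookkeeping identity $t_{G_i}(h)=t_{L(G_i)}(f)$ together with the zero-averaging check for $h$, both identical to the corresponding step in the proof of \cref{th:zero_r}. One could alternatively mimic the proof of \cref{th:single_f} verbatim, with \cref{th:zero_r} replacing the use of non-positivity, but the Levi-graph reduction is shorter and avoids re-running the disjoint-union step with zero-averaging functions.
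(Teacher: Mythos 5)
Your proposal is correct, and it takes a genuinely different route from the paper. The paper's proof of \cref{th:common_f_r} is simply ``Using \cref{th:zero_r}, we proceed exactly as in the proof of \cref{th:single_f}'', i.e., it re-runs the tensor-product induction of \cref{th:single_f} directly at the $r$-graph level, with \cref{th:zero_r} supplying the zero-averaging witnesses in place of bare non-positivity. Your proof instead applies the already-established graph case \cref{th:common_f} to the Levi graphs $L(G_1),\dots,L(G_k)$ and then pulls the resulting $2$-variate zero-averaging $f$ back to $r$ variables via $h(x_1,\dots,x_r)=\int_0^1\prod_{j=1}^r f(x_j,y)\,dy$, just as in \cref{th:zero_r}. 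Your verification of the hypotheses of \cref{th:common_f} for the $L(G_i)$ is right: non-positivity follows from \cref{th:Levi} since $r$ is odd, $2$-connectivity is the remark preceding \cref{th:Levi}, and $e(L(G_i))=r\cdot e(G_i)$ is even since $r$ is odd and $e(G_i)$ is even; and the checks that $h$ is symmetric, $[-1,1]$-valued, zero-averaging, and satisfies $t_{G_i}(h)=t_{L(G_i)}(f)$ are the same bookkeeping the paper already does in \cref{th:zero_r}. The trade-off: your route reuses \cref{th:common_f} as a black box, so you do not need to re-examine whether the induction of \cref{th:single_f} can be carried out inside the class of zero-averaging $r$-variate functions (perturbations and the disjoint-union step in particular), whereas the paper's route keeps the $r=2$ and $r$ odd cases formally parallel and does not funnel the odd-$r$ case through the $r=2$ one.
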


\begin{proof}[\bf{Proof}]
Using \cref{th:zero_r}, we proceed exactly as in the proof of \cref{th:single_f}. 
\end{proof}

We are now in a position to prove \cref{th:even}.

\begin{proof}[\bf{Proof of \cref{th:even}}]
Let us assume that $m$ in the statement of the theorem 
is as small as possible, 
that is, there are non-positive $2$-connected 
subgraphs $G_1,\dots,G_k$ with $2m$ edges 
and every other subgraph with an even number of edges not exceeding $2m$ 
has a vertex of degree $1$. 
By \cref{th:common_f} (if $r=2$) or \cref{th:common_f_r}  (if $r$ is odd), 
there exists a function $f$ such that 
$S := t_{G_1}(f) + \cdots + t_{G_k}(f) < 0$ and 
$t_G(f)=0$ for any $r$-graph $G$ that has a vertex of degree $1$. 
Hence, for $\varepsilon > 0$ sufficiently small,
\begin{align*}
  \sum_{G \subseteq H,\; {\rm e}(G) \equiv 0 \:{\rm mod}\: 2,\; {\rm e}(G)>0} t_G(\varepsilon f) 
  \: = \: \varepsilon^{2m} S + O(\varepsilon^{2m+1}) 
  \: < \: 0,
\end{align*}
so $H$ is not common.
\end{proof}

\section{Concluding remarks}

We say that an $r$-graph $H$ is \emph{locally Sidorenko} if there exists $\varepsilon > 0$ such that $t_H(W) \geq t_{K_r}(W)^{{\rm e}(H)}$ for all $r$-graphons $W$ with $\|W-1/2\|_\Box \leq \varepsilon$. That is, $H$ is locally Sidorenko if the required inequality holds for all $r$-graphons which are sufficiently close to the uniform graphon $1/2$, where closeness is measured in terms of the cut norm (see, for example,~\cite{L12}). Since it is probably difficult to give a complete characterisation of those $r$-graphs which are Sidorenko, we instead conclude by asking for a characterisation of locally Sidorenko $r$-graphs. 

\begin{question}
Which $r$-graphs are locally Sidorenko?
\end{question}

It was shown by Lov\'asz~\cite{Lov11} that every bipartite graph is locally Sidorenko and later, by Fox and Wei~\cite{FW17}, that a graph is locally Sidorenko if and only if it is either a forest or has even girth. The results of Section~\ref{sec:nonsid} are all proved by showing that the relevant $r$-graphs are not locally Sidorenko and may help give some hints as to what a full characterisation should look like. However, at present, we have no concrete conjectures, even in the $r$-partite case most relevant to us. Indeed, despite Theorem~\ref{thm:tight_cycles}, it is already open to determine which tight cycles are locally Sidorenko for $r \ge 4$.

\bibliographystyle{plainurl}
\bibliography{references}

\end{document}